\newtheorem{theorem}{Theorem}[section]
\newtheorem{proposition}[theorem]{Proposition}
\newtheorem{lemma}[theorem]{Lemma}
\newtheorem{corollary}[theorem]{Corollary}
\theoremstyle{definition}
\newtheorem{definition}[theorem]{Definition}
\newtheorem{example}[theorem]{Example}
\newtheorem{remark}[theorem]{Remark}
\newcommand{\ir}{{\mathsf{Irr}}}
\newcommand{\mn}{\mathbb N}
\newcommand{\cl}{{\rm cl}}
\newcommand{\ii}{{\rm int}}
\newcommand{\ua}{\mathord{\uparrow}}
\newcommand{\da}{\mathord{\downarrow}}
\newcommand{\mk}{\mathord{\mathsf{K}}}
\newcommand{\wdd}{\mathord{\mathsf{WD}}}
\journal{Topology and its Applications}
\begin{document}

\begin{frontmatter}



\title{Non-reflective categories of some kinds of weakly sober spaces\tnoteref{t1}}
\tnotetext[t1]{This research is supported by the National Natural Science Foundation of China (No. 12071199, 11661057), the National Natural Science Foundation of Jiangxi Province, China (No. 20192ACBL20045) and the Natural Science Foundation of PhD start-up of Nanchang Hangkong University (EA202007056).}

\author[mymainaddress]{Xiaoquan Xu\corref{mycorrespondingauthor}}
\cortext[mycorrespondingauthor]{Corresponding author}
\ead{xiqxu2002@163.com}
\author[mysecondaryaddress]{Xinpeng Wen}
\ead{wenxinpeng2009@163.com}
\address[mymainaddress]{School of Mathematics and Statistics, Minnan Normal University, Zhangzhou 363000, China}
\address[mysecondaryaddress]{College of Mathematics and Information, Nanchang Hangkong University, Nanchang 330063, China}

\begin{abstract}
In \cite{Erne2018}, Ern\'e weakened the concept of sobriety in order to extend the theory of sober spaces and locally hypercompact spaces to situations where directed joins were missing, and introduced and discussed three kinds of non-sober spaces: cut spaces, weakly sober spaces, and quasisober spaces. In \cite{XSXZ-2020}, three other kinds of non-sober spaces, namely $\mathsf{DC}$ space, $\mathsf{RD}$ space and $\mathsf{WD}$ space, were introduced and investigated. All these six kinds of spaces are strictly weaker than sober spaces. In this paper, it is shown that none of the category of all $\mathsf{DC}$ spaces, that of all $\mathsf{RD}$ spaces, that of all $\mathsf{WD}$ spaces, that of all quasisober spaces, that of all weakly spaces and that of all cut spaces is reflective in the category of all $T_0$ spaces with continuous mappings.
\end{abstract}

\begin{keyword}
Directed closure spaces; Rudin spaces; Well-filtered determined spaces; Quasisober spaces; Weakly sober spaces; Cut spaces; Reflection

\MSC 54D99; 54B30; 18B30; 06B30

\end{keyword}




\end{frontmatter}



\section{Introduction}

In domain theory and non-Hausdorff topology, the sober spaces, well-filtered spaces and $d$-spaces form three of the most important classes (see [2-13, 15-31]). Let $\mathbf{Top}_0$ be the category of all $T_0$ spaces with continuous mappings and $\mathbf{Sob}$ the full subcategory of $\mathbf{Top}_0$ containing all sober spaces. Denote the category of all $d$-spaces with continuous mappings and $\mathbf{Top}_w$ the full subcategory of $\mathbf{Top}_d$ containing all well-filtered spaces. It is well-known that $\mathbf{Sob}$ is reflective in $\mathbf{Top}_0$ (see \cite{redbook,Jean-2013}). Using $d$-closures, Wyler \cite{Wyler} proved that $\mathbf{Top}_d$ is reflective in $\mathbf{Top}_0$ (see also \cite{Ershov-1999,Keimel-Lawson,XXQ1, ZL-2017}). Recently, it was proved that $\mathbf{Top}_w$ is also reflective in $\mathbf{Top}_0$ (see \cite{LLW,SXXZ,wu-xi-xu-zhao-19,XXQ1,XSXZ-2020}).

In recent years, a number of classes of strongly sober spaces, strong $d$-spaces, weakly sober spaces, weakly well-filtered spaces and weakly $d$-spaces have been introduced and extensively studied from various different perspectives (see \cite{Erne2018, LL, MLZ, WX-2020, xu2020, xuzhao-2020, ZJW-2019, ZF-2010, ZH-2015}). In particular, in \cite{Erne2018}, Ern\'e relaxed the concept of sobriety in order to extend the theory of sober spaces and locally hypercompact spaces to situations where directed joins were missing. To that aim, he replaced joins by cuts, and introduced three kinds of non-sober spaces: cut spaces, weakly sober spaces, and quasisober spaces. This approach generalized and facilitated many results in the theory of quasicontinuous posets. For example, Ern\'e \cite{Erne2018} proved that the locally hypercompact weakly sober spaces are exactly the weak Scott spaces
of $s_2$-quasicontinuous posets in the sense of Zhang and Xu \cite{ZX-2015}. Some basic properties of cut spaces, weakly sober spaces and quasisober spaces were investigated in \cite{WX-2020}. In \cite{XSXZ-2020}, Xu, Shen, Xi and Zhao defined three new types of topological spaces --- directed closure spaces ($\mathsf{DC}$ spaces for short), Rudin spaces ($\mathsf{RD}$ spaces for short) and well-filtered determined spaces ($\mathsf{WD}$ spaces for short). The class of $\mathsf{RD}$ spaces lie between the class of $\mathsf{WD}$ spaces and that of $\mathsf{DC}$ spaces, while the class of $\mathsf{DC}$ spaces lies between the class of  $\mathsf{RD}$ spaces and that of sober spaces. It was shown by Ern\'e \cite[Proposition 3.2]{Erne2018} that every locally hypercompact $T_0$ space is a $\mathsf{DC}$ space. In \cite[Theorem 6.10 and Theorem 6.15]{XSXZ-2020} it was proved that every locally compact (resp., core-compact) $T_0$ space is a $\mathsf{RD}$ (resp., $\mathsf{WD}$) space. It was also shown in \cite[Theorem 5.6 and Theorem 6.12]{XSXZ-2021} that every $T_0$ space with a first-countable sobrification is an $\mathsf{RD}$ space and every first-countable $T_0$ space is a $\mathsf{WD}$ space.

Let $\mathbf{Top}_{dc}$ (resp., $\mathbf{Top}_{rd}$, $\mathbf{Top}_{wd}$, $\mathbf{QSob}$, $\mathbf{WSob}$, $\mathbf{Top}_{cut}$) be the category of all $\mathsf{DC}$ (resp., $\mathsf{RD}$, $\mathsf{WD}$, quasisober, weakly sober, cut) spaces with continuous mappings. Now a question naturally arises:

$\mathbf{Question.}$ Is $\mathbf{Top}_{dc}$ (resp., $\mathbf{Top}_{rd}$, $\mathbf{Top}_{wd}$, $\mathbf{QSob}$, $\mathbf{WSob}$, $\mathbf{Top}_{cut}$) reflective in $\mathbf{Top}_0$?

In this paper, we will answer the above question in the negative by showing that none of $\mathbf{Top}_{dc}$, $\mathbf{Top}_{rd}$, $\mathbf{Top}_{wd}$, $\mathbf{QSob}$, $\mathbf{WSob}$ and $\mathbf{Top}_{cut}$ is reflective in $\mathbf{Top}_0$.

\section{Preliminaries}

First we briefly recall some fundamental concepts and notations that will be used in the paper. For further details, we refer the reader to \cite{Engelking-1989, redbook, Jean-2013, MacLane-1997}.

For a subset $A$ of a poset $P$, let $\mathord{\downarrow}A=\{x\in P: x\leq  a \mbox{ for some } a\in A\}$ and $\mathord{\uparrow}A=\{x\in P: x\geq  a \mbox{ for some } a\in A\}$. For  $x\in P$, we write
$\mathord{\downarrow}x$ for $\mathord{\downarrow}\{x\}$ and $\mathord{\uparrow}x$ for $\mathord{\uparrow}\{x\}$. A subset $A$ is called a \emph{lower set} (resp., an \emph{upper set}) if $A=\downarrow A$ (resp., $A=\uparrow A$). Let $P^{(<\omega)}=\{F\subseteq P : F \mbox{~is a finite set}\}$ and $\mathbf{Fin} P=\{\uparrow F : F\in P^{(<\omega)}\setminus\{\emptyset\}\}$. A nonempty subset $D$ of a poset $P$ is called \emph{directed} if every two elements in $D$ have an upper bound in $D$. The set of all directed sets of $P$ is denoted by $\mathcal D(P)$. A subset $I\subseteq P$ is called an \emph{ideal} of $P$ if $I$ is a directed lower subset of $P$. The poset of all ideals (with the order of set inclusion) of $P$ is denoted by $\mathrm{Id}~\! P$. $P$ is called a \emph{directed complete poset}, or \emph{dcpo} for short, provided that $\vee D$ exists in $P$ for each $D\in \mathcal D(P)$. Obviously, $P$ is a dcpo iff $\vee I$ exists in $P$ for each $I\in \mathrm{Id}~\!P$. The poset $P$ is said to be \emph{Noetherian} if it satisfies the \emph{ascending chain condition} ($\mathrm{ACC}$ for short): every ascending chain has a greatest member. Clearly, $P$ is Noetherian if{}f every directed set of $P$ has a largest element (equivalently, every ideal of $P$ is principal).

As in \cite{redbook}, the \emph{upper topology} on a poset $Q$, generated
by the complements of the principal ideals of $Q$, is denoted by $\upsilon (Q)$; dually define the $lower$ $topology$ on $Q$ and denote it by $\omega  (Q)$. The upper sets of $Q$ form the (\emph{upper}) \emph{Alexandroff topology} $\alpha(Q)$. The space $(Q, \alpha(Q))$ is called the \emph{Alexandroff space} of $Q$. A subset $U$ of $Q$ is \emph{Scott open} if
(i) $U=\mathord{\uparrow}U$, and (ii) for any directed subset $D$ of $Q$ for which $\vee D$ exists, $\vee D\in U$ implies $D\cap U\neq\emptyset$. All Scott open subsets of $Q$ form a topology. This topology is called the \emph{Scott topology} on $Q$ and denoted by $\sigma(Q)$. The space $\Sigma~\!\! Q=(Q,\sigma(Q))$ is called the \emph{Scott space} of $Q$. For the chain $2=\{0, 1\}$ (with the order $0<1$), we have $\sigma(2)=\{\emptyset, \{1\}, \{0,1\}\}$. The space $\Sigma~\!\!2$
is well-known under the name of \emph{Sierpinski space}. The \emph{weak Scott topology} $\sigma_2 (Q)$ consists of all subsets $U$ of $Q$ such that $D\cap U=\emptyset$ implies $D^{\delta}\cap U=\emptyset$ for each $D\in \mathcal D(Q)$ (cf. \cite{Erne2018}). The space $\Sigma_2~\!\!Q=(Q, \sigma_2 (Q))$ is called the \emph{weak Scott space} of $Q$. In case $Q$ is a dcpo, the topology $\sigma_2 (Q)$ coincides with the usual Scott topology $\sigma (Q)$.

For a poset $P$ and $A\subseteq B\subseteq P$, $A^{\uparrow}$ and $A^{\downarrow}$ denote the sets of all upper and lower bounds of $A$ in $P$, respectively. Let $A^{\delta}=(A^{\uparrow})^{\downarrow}$. The set $A^{\delta}$ is called the \emph{cut closure} of $A$ in $P$. If $A^{\delta}=A$, we say that $A$ is a \emph{cut} in $P$. The complete lattice $\delta(P)=\{A^{\delta}: A\subseteq P\}$ (with the order of set inclusion) is called the \emph{Dedekind}-\emph{Macneille completion} of $P$ (cf. \cite{Erne-1991}).

The category of $T_0$ spaces and continuous mappings is denoted by $\mathbf{Top}_0$. For a $T_0$ space $X$ and $A\subseteq X$, the closure of $A$ (resp., the  interior of $A$) in $X$ is denoted by $\cl_X A$ (resp., $\ii_X A$) or simply by $\overline{A}$ (resp., $\ii A$) if there is no confusion. Let $X^{(<\omega)}=\{F\subseteq X : F \mbox{~is a finite set}\}$ and $X^{(\leqslant\omega)}=\{F\subseteq X : F \mbox{~is a countable set}\}$. We use $\leq_X$ to denote the \emph{specialization order} on $X$: $x\leq_X y$ if{}f $x\in \overline{\{y\}}$. The poset $X$ with the specialization order is denoted by $\Omega X$ or simply by $X$ if there is no confusion. Clearly, each open set of $X$ is an upper set and each closed set of $X$  is a lower set with respect to the specialization order. In the following, when a $T_0$ space $X$ is considered as a poset, the partial order always means the specialization order provided otherwise indicated. Let $\mathcal O(X)$ (resp., $\mathcal C(X)$) be the set of all open subsets (resp., closed subsets) of $X$. Define $\mathcal S_c(X)= \{\overline{\{x\}} : x\in X\}$ and $\mathcal D_c(X)=\{\overline{D} : D\in \mathcal D(X)\}$. For two spaces $X$ and $Y$, we use the symbol $X\cong Y$ to represent that $X$ and $Y$ are homeomorphic. For two posets $P$ and $Q$, we also use the symbol $P\cong Q$ to represent that $P$ and $Q$ are isomorphic. A {\it retract} of $X$ is a topological space $Z$ such that there are two continuous mappings $f:X\rightarrow Z$ and $g:Z\rightarrow X$ such that $f\circ g=id_{Z}$ (the identity mapping of $Z$ onto itself).

A $T_0$ space $X$ is called a $d$-space (or \emph{monotone convergence space}) if $X$ (with the specialization order) is a dcpo
 and $\mathcal O(X) \subseteq \sigma(X)$ (cf. \cite{redbook, Wyler}). Let $\mathbf{Top}_d$ demote the full subcategory of $\mathbf{Top}_0$ containing all $d$-spaces. A nonempty subset $A$ of a $T_0$ $X$ is said to be {\it irreducible} if for any $\{F_{1},F_{2}\}\subseteq \mathcal{C} (X)$, $A\subseteq F_{1}\cup F_{2}$ always implies $A\subseteq F_{1}$ or $A\subseteq F_{2}$. Denote by $\ir(X)$ (resp., $\ir_{c}(X)$) the set of all irreducible (resp., irreducible closed) subsets of $X$. The space $X$ is called \emph{sober}, if for any $A\in\ir_c(X)$, there is a unique point $x\in X$ such that $A=\overline{\{x\}}$. Let $\mathbf{Sob}$ be the full subcategory of $\mathbf{Top}_{0}$ containing all sober spaces.

For any $T_0$ space $X$, $\mathcal G\subseteq 2^{X}$ (the set of all subsets of $X$)) and $A\subseteq X$, let $\Diamond_{\mathcal G} A=\{G\in \mathcal G : G\cap A\neq\emptyset\}$ and $\Box_{\mathcal G} A=\{G\in \mathcal G : G\subseteq  A\}$. The symbols $\Diamond_{\mathcal G} A$ and $\Box_{\mathcal G} A$ will be simply written as $\Diamond A$ and $\Box A$ respectively if there is no confusion. The \emph{lower Vietoris topology} on $\mathcal{G}$ is the topology that has $\{\Diamond U : U\in \mathcal O(X)\}$ as a subbase, and the resulting space is denoted by $P_H(\mathcal{G})$. If $\mathcal{G}\subseteq \ir (X)$, then $\{\Diamond_{\mathcal{G}} U : U\in \mathcal O(X)\}$ is a topology on $\mathcal{G}$. The space $P_H(\mathcal{C}(X)\setminus \{\emptyset\})$ is called the \emph{Hoare power space} or \emph{lower space} of $X$ and is denoted by $P_H(X)$ for short (cf. \cite{Schalk}). Clearly, $P_H(X)=(\mathcal{C}(X)\setminus \{\emptyset\}, \upsilon(\mathcal{C}(X)\setminus \{\emptyset\}))$. So $P_H(X)$ is always sober (see, for example, \cite[Corollary 4.10]{ZH-2015} or \cite[Proposition 2.9]{XSXZ-2020}).

\begin{remark} \label{eta continuous}\label{remark:2.1} Let $X$ be a $T_0$ space.
\begin{enumerate}[\rm (1)]
	\item If $\mathcal{S}_c(X)\subseteq \mathcal{G}$, then the specialization order on $P_H(\mathcal{G})$ is the set inclusion order, and the \emph{canonical mapping} $\eta_{X}: X\longrightarrow P_H(\mathcal{G})$, given by $\eta_X(x)=\overline {\{x\}}$, is an order and topological embedding (cf. \cite{redbook, Jean-2013, Schalk}).
    \item The space $X^s=P_H(\ir_c(X))$ with the canonical mapping $\eta_{X}: X\longrightarrow X^s$ is a \emph{sobrification} of $X$ (cf. \cite{redbook, Jean-2013}). $\langle X^s=P_H(\ir_c(X)), \eta_{X}\rangle$ is called the \emph{canonical sobrification} of $X$.
\end{enumerate}
\end{remark}

A subset $A$ of a topological space $X$ is called \emph{saturated} if $A$ equals the intersection of all open sets containing it (equivalently, $A$ is an upper set in the specialization preorder). We shall use $\mathsf{K}(X)$ to denote the set of all nonempty compact saturated subsets of $X$ and endow it with the \emph{Smyth preorder}, that is, for $K_1,K_2\in \mathord{\mathsf{K}}(X)$, $K_1\sqsubseteq K_2$ if{}f $K_2\subseteq K_1$. The space $X$ is called \emph{well-filtered} if it is $T_0$, and for any filtered family $\mathcal{K}\subseteq \mathord{\mathsf{K}}(X)$ and any open set $U$, $\bigcap\mathcal{K}{\subseteq} U$ implies $K{\subseteq} U$ for some $K{\in}\mathcal{K}$. Let $\mathbf{Top}_w$ be the full subcategory of $\mathbf{Top}_{0}$ containing all well-filtered spaces.

We have the following implications (which can not be reversed):
\begin{center}
sobriety $\Rightarrow$ well-filteredness $\Rightarrow$ $d$-space.
\end{center}

For a $T_0$ space $X$, the space $P_S(\mathord{\mathsf{K}}(X))$, denoted shortly by $P_S(X)$, is called the \emph{Smyth power space} or \emph{upper space} of $X$ (cf. \cite{Heckmann, Schalk}). It is easy to verify that the specialization order on $P_S(X)$ is the Smyth order (that is, $\leq_{P_S(X)}=\sqsubseteq$). The \emph{canonical mapping} $\xi_X: X\longrightarrow P_S(X)$, $x\mapsto\ua x$, is an order and topological embedding (cf. \cite{Heckmann, Klause-Heckmann, Schalk}).

As in \cite{Erne2018}, a topological space $X$ is \emph{locally hypercompact} if for each $x\in X$ and each open neighborhood $U$ of $x$, there is  $\ua F\in X^{(<\omega)}$ such that $x\in\ii\,\ua F\subseteq\ua F\subseteq U$. The space $X$ is called \emph{core}-\emph{compact} if $(\mathcal O(X), \subseteq)$ is a \emph{continuous lattice} (cf. \cite{redbook}).

\begin{definition}\label{MD space} (\cite{Erne-2009})\label{definition:2.2}  A space $X$ is called \emph{monotone determined}
if any subset $U$ meeting all directed sets whose closure meets $U$ is open.
\end{definition}

\begin{definition}[\cite{Erne2018}]\label{definition:2.3} \rm Let $X$ be a $T_0$ space.
\begin{enumerate}[\rm (1)]
\item $X$ is said to be a \emph{cut space} if in $X$ (with the specialization order) any monotone net converges to each point in the cut closure of its range, or equivalently, $\cl_X D=D^{\delta}$ for each $D\in \mathcal D(X)$.

\item $X$ is called $weakly$ $sober$ if every irreducible closed set is a cut, that is, an intersection of point closures (with the whole space as intersection of the empty set).

\item $X$ is called $quasisober$ if each irreducible closed set is the cut closure of a directed set.
\end{enumerate}
\end{definition}

The category of all cut spaces (resp., all weakly sober spaces, all quasisober spaces) with continuous mappings is denoted by $\mathbf{Top}_{cut}$ (resp., $\mathbf{WSob}$, $\mathbf{QSob}$).

\begin{proposition} (\cite[Proposition 7]{Erne-2009} and \cite[Lemma 3.2]{Erne2018})\label{proposition:2.4}
\begin{enumerate}[\rm (1)]
\item The weak Scott topology is the coarsest ($weakest$) topology on a poset $P$ making it a monotone determined space with specialization poset $P$, the strongest one is the Alexandroff topology.
\item Every locally hypercompact space is monotone determined.

\item  The cut spaces are exactly those space whose topology is coarser than the weak Scott topology of the specialization poset.

\item The monotone determined cut spaces are exactly the weak Scott spaces of posets.
\end{enumerate}
\end{proposition}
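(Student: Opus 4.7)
The plan is to establish the four parts in order, since (4) is a direct combination of (1) and (3). A unifying tool used throughout is the following observation: in any $T_0$ topology $\tau$ on $P$ whose specialization order equals the given order, the inclusion $\cl_\tau(D)\subseteq D^\delta$ holds for every $D\subseteq P$. Indeed, for each $y\in D^\uparrow$ the principal ideal $\da y=\overline{\{y\}}$ is $\tau$-closed and contains $D$, hence contains $\cl_\tau(D)$; intersecting over $y\in D^\uparrow$ yields the claim. Applied to $\tau=\sigma_2(P)$, together with a direct verification that $D^\delta$ is itself weakly Scott closed (a lower set closed under the operator $E\mapsto E^\delta$ on directed $E$), this gives the identity $\cl_{\sigma_2 P}(D)=D^\delta$ for directed $D$.

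For (1), I first note that the MD hypothesis applied to a singleton $\{x\}$ (whose closure is $\da x$) forces any candidate open $U$ to be an upper set. The space $(P,\sigma_2(P))$ is then monotone determined because, for upper $U$, the MD condition reads $D^\delta\cap U\ne\emptyset\Rightarrow D\cap U\ne\emptyset$ for directed $D$, which is exactly the $\sigma_2$-openness condition. For $\alpha(P)$: any topology on $P$ with specialization poset $P$ has only upper sets as open sets and is therefore contained in $\alpha(P)$, which is trivially monotone determined because $\cl_\alpha(D)=\da D$ makes the hypothesis automatic for upper $U$. For $\sigma_2(P)$ coarsest: given any monotone-determined $\tau$ with specialization poset $P$ and $U\in\sigma_2(P)$, for each directed $D$ the chain of implications $\cl_\tau(D)\cap U\ne\emptyset\Rightarrow D^\delta\cap U\ne\emptyset\Rightarrow D\cap U\ne\emptyset$ holds, so $U\in\tau$ by the MD property of $\tau$. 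Part (3) follows similarly: if $X$ is a cut space then $\cl_X D=D^\delta$, so open sets of $X$ satisfy the $\sigma_2$ condition; conversely, $\tau\subseteq\sigma_2(P)$ yields $\cl_\tau(D)\supseteq\cl_{\sigma_2 P}(D)=D^\delta$, which combined with the reverse inclusion forces $\cl_\tau(D)=D^\delta$. Part (4) is then immediate: a monotone-determined cut space has topology containing $\sigma_2(P)$ by (1) and contained in $\sigma_2(P)$ by (3), and conversely $\Sigma_2P$ is monotone determined by (1) and a cut space by (3).

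The substantive work is (2). Let $X$ be locally hypercompact and $U\subseteq X$ satisfy the MD hypothesis, so $U$ is upper; suppose for contradiction $A=X\setminus U$ is not closed and pick $x\in\overline{A}\cap U$. The plan is to exhibit a directed $D\subseteq A$ with $x\in\overline{D}$, which contradicts the MD condition since $D\cap U=\emptyset$ while $\overline{D}\cap U\ni x$. Set $\mathcal{F}_x=\{F\in X^{(<\omega)}:x\in\ii\,\ua F\}$; by local hypercompactness, $\{\ii\,\ua F:F\in\mathcal{F}_x\}$ is a neighbourhood base at $x$. For each $F\in\mathcal{F}_x$, openness of $\ii\,\ua F$ and $x\in\overline{A}$ yield $a\in\ii\,\ua F\cap A$; since $a\in\ua F$, we have $a\geq f$ for some $f\in F$, and because $A$ is a lower set, $f\in F\cap A$, so $F\cap A\ne\emptyset$. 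Given $F_1,F_2\in\mathcal{F}_x$, applying local hypercompactness at $\ii\,\ua F_1\cap\ii\,\ua F_2$ produces $F_3\in\mathcal{F}_x$ with $\ua F_3\subseteq\ua F_1\cap\ua F_2$, whence the lower-set property of $A$ gives $F_3\cap A\subseteq\ua(F_1\cap A)\cap\ua(F_2\cap A)$. Thus the family $\{F\cap A:F\in\mathcal{F}_x\}$ of non-empty finite sets is Smyth-filtered, and Rudin's lemma yields a directed $D\subseteq\bigcup_{F\in\mathcal{F}_x}(F\cap A)\subseteq A$ meeting every $F\cap A$. Since $F\subseteq\ua F\subseteq V$ for every open neighbourhood $V$ of $x$ with $\ii\,\ua F\subseteq V$, the set $D$ meets every open neighbourhood of $x$, so $x\in\overline{D}$ as required. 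The main obstacle is precisely this passage to a directed set, handled by Rudin's lemma after establishing Smyth-filteredness via the lower-set arithmetic for $A$.
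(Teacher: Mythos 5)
The paper does not prove this proposition at all: it is imported as a quoted result from Ern\'e (\cite[Proposition 7]{Erne-2009} and \cite[Lemma 3.2]{Erne2018}), so there is no internal argument to compare against. Your proof is correct and follows what is essentially Ern\'e's route. Parts (1), (3) and (4) are the right bookkeeping: the inclusion $\cl_\tau(D)\subseteq D^{\delta}$ for any topology with the given specialization order, the computation $\cl_{\sigma_2 P}(D)=D^{\delta}$ for directed $D$, and the observation that for an upper set the monotone-determined condition is literally the $\sigma_2$-openness condition. One small expositional slip: your stated justification for the identity $\cl_{\sigma_2 P}(D)=D^{\delta}$ (the general inclusion plus the closedness of $D^{\delta}$) delivers only the inclusion $\cl_{\sigma_2 P}(D)\subseteq D^{\delta}$, twice over; the reverse inclusion $D^{\delta}\subseteq\cl_{\sigma_2 P}(D)$ --- which you genuinely use in (3) --- comes from the definition of $\sigma_2$-openness applied to the complement of the closure ($P\setminus\cl_{\sigma_2 P}(D)$ is $\sigma_2$-open and misses $D$, hence misses $D^{\delta}$). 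This is a one-line fix, not a mathematical gap. For the substantive part (2), your argument --- take $A=X\setminus U$ not closed, pick $x\in\overline{A}\cap U$, use the hypercompact neighbourhood base to form the family $\{F\cap A: F\in\mathcal F_x\}$, check it consists of nonempty finite sets and is Smyth-filtered via the lower-set property of $A$ and $\ua F_3\subseteq\ua F_1\cap\ua F_2$, and extract a directed $D\subseteq A$ with $x\in\overline{D}$ by Rudin's lemma --- is exactly the intended proof, and the delicate points (that $U$ is forced to be an upper set by the singleton case of the MD hypothesis, and that lower-set arithmetic transfers the filteredness to the traces $F\cap A$) are all handled correctly.
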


\begin{lemma} (\cite[Lemma 5.1]{Erne2018})\label{lemma:2.5}
\begin{enumerate}[\rm (1)]
\item In a cut space, the cut closure of directed sets is irreducible and closed.

\item Every sober space is quasisober, every quasisober space is weakly sober, and every weakly sober space is a cut space.

\item A $T_0$ space is sober iff it is quasisober and directed complete in its specialization order.

\end{enumerate}
\end{lemma}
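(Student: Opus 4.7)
The plan is to prove the three parts in order, with parts (1) and (2) providing most of the machinery for (3). For part (1), I would start with the standard observation that in any $T_0$ space the closure of a directed set $D$ is irreducible: if $\overline{D}\subseteq F_1\cup F_2$ with $F_1,F_2$ closed and neither containing $D$, pick $d_i\in D\setminus F_i$, use directedness to find $d\in D$ above both $d_i$, then note $d\in F_1\cup F_2$ forces (since closed sets are lower sets in the specialization order) $d_1\in F_1$ or $d_2\in F_2$, a contradiction. Closedness then follows for free in a cut space from $\cl_X D=D^\delta$, so $D^\delta$ is both closed and irreducible.

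For part (2), the implication sober $\Rightarrow$ quasisober is immediate after observing $\overline{\{x\}}=\mathord{\downarrow}x=\{x\}^\delta$, since a singleton is trivially directed. Quasisober $\Rightarrow$ weakly sober is just idempotence of $\delta$: $(D^\delta)^\delta=D^\delta$ shows every cut closure is a cut. The substantive implication is weakly sober $\Rightarrow$ cut space. Given directed $D$, the closure $\overline{D}$ is irreducible closed by the argument above, hence by weak sobriety $\overline{D}=(\overline{D})^\delta$. The key small lemma is that $D$ and $\overline{D}$ have the same set of upper bounds: if $z$ is an upper bound of $D$ then $D\subseteq\mathord{\downarrow}z=\overline{\{z\}}$, and since the right side is closed, $\overline{D}\subseteq\mathord{\downarrow}z$. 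Therefore $D^\delta=(D^{\uparrow})^{\downarrow}=(\overline{D}^{\uparrow})^{\downarrow}=(\overline{D})^\delta=\overline{D}$, which is exactly the definition of a cut space.

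For part (3), the forward direction uses (2) (sober $\Rightarrow$ quasisober) together with the classical fact that sober spaces are dcpos: in a sober space each irreducible closed $\overline{D}$ equals $\overline{\{x\}}$, and this $x$ is easily seen to be $\bigvee D$. For the converse, let $A\in\ir_c(X)$; by quasisobriety write $A=D^\delta$ for some directed $D$; by the dcpo hypothesis $x:=\bigvee D$ exists; then $D^\uparrow=\mathord{\uparrow}x$, so $A=D^\delta=(\mathord{\uparrow}x)^\downarrow=\mathord{\downarrow}x=\overline{\{x\}}$, and uniqueness of $x$ follows from $T_0$-ness.

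The only step I would flag as potentially delicate is the weakly sober $\Rightarrow$ cut space implication in part (2): the rest is either immediate from definitions or is the standard sober-implies-dcpo lemma. The crucial input there is the coincidence of the upper-bound sets of $D$ and $\overline{D}$, which hinges on the $T_0$ identity $\overline{\{z\}}=\mathord{\downarrow}z$; once that is isolated, the chain $D^\delta=(\overline{D})^\delta=\overline{D}$ closes the argument cleanly.
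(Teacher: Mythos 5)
Your argument is correct. Note that the paper itself offers no proof of this lemma --- it is imported verbatim from Ern\'e (\cite[Lemma 5.1]{Erne2018}) with only a citation --- so there is nothing internal to compare against; your proof is the standard one, and the step you flagged (weakly sober $\Rightarrow$ cut space, via the identity $D^{\uparrow}=(\overline{D})^{\uparrow}$ coming from $\overline{\{z\}}=\mathord{\downarrow}z$ and hence $D^{\delta}=(\overline{D})^{\delta}=\overline{D}$) is exactly the right pivot and goes through as written.
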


But a weakly sober space which is a dcpo in its specialization order is not always sober (see Example 2.6 below). Indeed, a $T_0$ space is sober iff it is weakly sober and every irreducible (closed) subset has a sup in its specialization order.

\begin{example} (\cite[Example 5.2, Example 5.3 and Example 5.4]{Erne2018})\label{example:2.6}
\begin{enumerate}[\rm (a)]
\item Adding top $\top$ and bottom $\bot$ to an infinite antichain yields a Noetherian lattice $L$ that is a $d$-space, hence a cut space, but not weakly sober in the topology $\upsilon(L)\cup \{\{\top\}\}$, because $L\backslash \{\top\}$ is irreducible and closed but not a cut.
\item The Scott space $\Sigma R$ of the real line $R$ is not sober $($as $R$ is not a dcpo$)$ but quasisober: the irreducible closed sets are the point closures and the whole space.
\item An infinite space $X$ with the cofinite topology is weakly sober but not quasisober: the irreducible closed sets are $X$ and the singletons. Whence $X$ is non-sober.
\end{enumerate}
\end{example}

For a $T_0$ space $X$ and $\mathcal{K}\subseteq \mathsf{K}(X)$, let $M(\mathcal{K})=\{A\in \mathcal C(X) : K\cap A\neq\emptyset \mbox{~for all~} K\in \mathcal{K}\}$ (that is, $\mathcal K\subseteq \Diamond A$) and $m(\mathcal{K})=\{A\in \mathcal C(X) : A \mbox{~is a minimal menber of~} M(\mathcal{K})\}$.

\begin{definition}\label{definition:2.7} (\cite[Definition 2.1]{SXXZ}, \cite[Definition 4.6]{XSXZ-2020})
		Let $X$ be a $T_0$ space. A nonempty subset  $A$  of $X$  is said to have \emph{Rudin property}, if there exists a filtered family $\mathcal K\subseteq \mathord{\mathsf{K}}(X)$ such that $\overline{A}\in m(\mathcal K)$ (that is, $\overline{A}$ is a minimal closed set that intersects all members of $\mathcal K$). Let $\mathsf{RD}(X)=\{A\in \mathcal C(X) : A\mbox{~has Rudin property}\}$. The sets in $\mathsf{RD}(X)$ will be called \emph{Rudin sets}.
\end{definition}

\begin{definition} (\cite[Definition 6.1]{XSXZ-2020})\label{definition:2.8}
 A subset $A$ of a $T_0$ space $X$ is called a \emph{well-filtered determined set}, $\mathsf{WD}$ \emph{set} for short, if
for any continuous mapping $f : X \rightarrow Y$ to a well-filtered space $Y$, there exists a unique $y_A \in Y$ such
that $f(A) = \overline{\{y_A \}}$. Denote by $\mathsf{WD}(X)$ the set of all closed well-filtered determined subsets of $X$.
\end{definition}

\begin{definition}(\cite[Definition 3.7, Definition 4.7 and Definition 6.7]{XSXZ-2020})\label{definition:2.9} Let $X$ be a $T_0$ space.
\begin{enumerate}[\rm (1)]
\item $X$ is called a \emph{directed closure space}, $\mathsf{DC}$ \emph{space} for short, if $\ir_c(X)=D_c(X)$, that is, for each $A\in \ir_c(X)$, there exists a directed subset $D$ of $X$ such that $A=\overline{D}$. The category of all $\mathsf{DC}$ spaces with continuous mappings is denoted by $\mathbf{Top}_{dc}$.

\item $X$ is called a \emph{Rudin space}, $\mathsf{RD}$ \emph{space} for short, if $\ir_c(X)=\mathsf{RD}(X)$, that is, every irreducible closed set of X is a Rudin set. The category of all Rudin spaces with continuous mappings is denoted by $\mathbf{Top}_{rd}$.

\item $X$  is called a \emph{well-filtered determined space}, $\mathsf{WD}$ \emph{space} for short, if all irreducible closed subsets of X are well-filtered
determined, that is, $\ir_c(X)=\mathsf{WD}(X)$. The category of all $\mathsf{WD}$ spaces with continuous mappings is denoted by $\mathbf{Top}_{wd}$.
\end{enumerate}
\end{definition}

\begin{lemma}\label{DRWIsetrelation} (\cite[Proposition 6.2]{XSXZ-2020})
	Let $X$ be a $T_0$ space. Then $\mathcal{S}_c(X)\subseteq\mathcal{D}_c(X)\subseteq \mathsf{RD}(X)\subseteq\mathsf{WD}(X)\subseteq\ir_c(X)$.
\end{lemma}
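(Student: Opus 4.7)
The target is a chain of four inclusions; the first two amount to unwinding definitions, the third is the main technical step, and the fourth is a sobrification argument.

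The inclusion $\mathcal{S}_c(X)\subseteq\mathcal{D}_c(X)$ is immediate since every singleton is directed. For $\mathcal{D}_c(X)\subseteq\mathsf{RD}(X)$, given a directed $D\subseteq X$, I would take the witness family $\mathcal{K}=\{\ua d:d\in D\}$. Each $\ua d$ is compact saturated (any open neighborhood of $d$ is an upper set and so contains $\ua d$), and $\mathcal{K}$ is filtered because $D$ is. Plainly $d\in\ua d\cap\overline{D}$, so $\overline{D}\in M(\mathcal{K})$; and if a closed $A\subseteq\overline{D}$ meets every $\ua d$, then since closed sets are lower sets in the specialization order, each $d\in A$, forcing $\overline{D}\subseteq A$.

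For $\mathsf{RD}(X)\subseteq\mathsf{WD}(X)$, fix $A\in\mathsf{RD}(X)$ with filtered witness $\mathcal{K}\subseteq\mathsf{K}(X)$ and a continuous map $f:X\to Y$ to a well-filtered $Y$. The key move is to pass to $\mathcal{K}^{*}=\{\ua_Y f(K\cap A):K\in\mathcal{K}\}$: each $K\cap A$ is nonempty (as $A\in M(\mathcal{K})$) and closed in the compact $K$, hence compact, so $\ua_Y f(K\cap A)\in\mathsf{K}(Y)$; filteredness transfers from $\mathcal{K}$. Since $\overline{f(A)}$ meets every member of $\mathcal{K}^{*}$ via $f(k)$ for any $k\in K\cap A$, the contrapositive of well-filteredness yields some $y\in\overline{f(A)}\cap\bigcap\mathcal{K}^{*}$, so $\overline{\{y\}}\subseteq\overline{f(A)}$. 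If this were strict, pick $a_0\in A$ with $f(a_0)\notin\overline{\{y\}}=\da y$; then $A'=A\cap f^{-1}(\overline{\{y\}})$ is a proper closed subset of $A$, and by Rudin minimality $A'$ misses some $K_0\in\mathcal{K}$. But $y\in\ua_Y f(K_0\cap A)$ gives $y\geq f(a)$ for some $a\in K_0\cap A$, whence $a\in A'\cap K_0$, a contradiction. Hence $\overline{f(A)}=\overline{\{y\}}$, and the $T_0$ axiom on $Y$ forces uniqueness.

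Finally, for $\mathsf{WD}(X)\subseteq\ir_c(X)$, I would feed the canonical sobrification $\eta_X:X\to X^s=P_H(\ir_c(X))$ (a continuous map to a sober, hence well-filtered, space) into the defining property of $A\in\mathsf{WD}(X)$, obtaining a unique $y_A\in X^s$ with $\overline{\eta_X(A)}^{X^s}=\overline{\{y_A\}}^{X^s}$. Unwinding the subbasic opens $\Diamond U$ of the lower Vietoris topology, the left-hand side equals $\{B\in\ir_c(X):B\subseteq A\}$, and because the specialization order on $X^s$ is set inclusion (Remark~\ref{remark:2.1}), the right-hand side equals $\{B\in\ir_c(X):B\subseteq y_A\}$. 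Testing the equality with $B=\overline{\{x\}}$ for $x\in A$ gives $A\subseteq y_A$, and testing with $B=y_A$ itself gives $y_A\subseteq A$, so $A=y_A\in\ir_c(X)$. The main obstacle I anticipate is the third step: one must identify the correct image family (using $K\cap A$, not $K$) and combine Rudin's minimality with the ``every member meets $\Rightarrow$ the intersection meets'' reformulation of well-filteredness, neither of which is hard in isolation but together require some care.
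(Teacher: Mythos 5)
Your proof is correct. The paper itself gives no argument for this lemma --- it is quoted verbatim from \cite[Proposition 6.2]{XSXZ-2020} --- and your four steps (singletons are directed; the filtered family $\{\mathord{\uparrow}d : d\in D\}$ witnesses the Rudin property of $\overline{D}$; the transferred family $\{\mathord{\uparrow}_Y f(K\cap A) : K\in\mathcal{K}\}$ together with minimality of $A$ and well-filteredness of $Y$; and testing the defining property of a $\mathsf{WD}$ set against the sobrification $\eta_X : X\to P_H(\ir_c(X))$) reconstruct exactly the standard argument of the cited reference, with all the delicate points (nonemptiness and compactness of $K\cap A$, the contrapositive form of well-filteredness, and the minimality contradiction via $A'=A\cap f^{-1}(\overline{\{y\}})$) handled correctly.
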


\begin{corollary}\label{SDRWspacerelation}(\cite[Corollary 6.3]{XSXZ-2020})
	Sober $\Rightarrow$ $\mathsf{DC}$ $\Rightarrow$ $\mathsf{RD}$ $\Rightarrow$ $\mathsf{WD}$.
\end{corollary}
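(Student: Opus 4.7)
The statement is the corollary labeled \texttt{SDRWspacerelation}, asserting the chain of implications Sober $\Rightarrow$ $\mathsf{DC}$ $\Rightarrow$ $\mathsf{RD}$ $\Rightarrow$ $\mathsf{WD}$. My plan is that this really is just a corollary: each implication will be read off by sandwiching $\ir_c(X)$ between two sets listed in Lemma \ref{DRWIsetrelation}, and using the defining equality $\ir_c(X)=\mathcal{F}(X)$ that each property supplies (with $\mathcal{F}(X)$ being $\mathcal{S}_c(X)$, $\mathcal{D}_c(X)$, $\mathsf{RD}(X)$, or $\mathsf{WD}(X)$, respectively).

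Concretely, first I would recall Lemma \ref{DRWIsetrelation}, which gives the inclusion chain
\[
\mathcal{S}_c(X)\subseteq\mathcal{D}_c(X)\subseteq \mathsf{RD}(X)\subseteq\mathsf{WD}(X)\subseteq\ir_c(X),
\]
valid in any $T_0$ space $X$. Next I would restate each property as the appropriate reverse inclusion: sobriety means $\ir_c(X)\subseteq \mathcal{S}_c(X)$ (every irreducible closed set equals the closure of a unique point), being a $\mathsf{DC}$ space means $\ir_c(X)\subseteq \mathcal{D}_c(X)$ (by Definition \ref{definition:2.9}(1)), being an $\mathsf{RD}$ space means $\ir_c(X)\subseteq \mathsf{RD}(X)$ (Definition \ref{definition:2.9}(2)), and being a $\mathsf{WD}$ space means $\ir_c(X)\subseteq \mathsf{WD}(X)$ (Definition \ref{definition:2.9}(3)).

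Then each implication is a one-line argument by pinching: if $X$ is sober then $\ir_c(X)\subseteq \mathcal{S}_c(X)\subseteq \mathcal{D}_c(X)\subseteq \ir_c(X)$, forcing $\ir_c(X)=\mathcal{D}_c(X)$, so $X$ is $\mathsf{DC}$. If $X$ is $\mathsf{DC}$ then $\ir_c(X)=\mathcal{D}_c(X)\subseteq \mathsf{RD}(X)\subseteq \ir_c(X)$, so $X$ is $\mathsf{RD}$. Finally, if $X$ is $\mathsf{RD}$ then $\ir_c(X)=\mathsf{RD}(X)\subseteq \mathsf{WD}(X)\subseteq \ir_c(X)$, so $X$ is $\mathsf{WD}$.

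There is no real obstacle: the substance of the corollary is packaged entirely in Lemma \ref{DRWIsetrelation}, so the only thing to take care of is making sure to cite the correct definition for each class and writing out the three sandwich arguments cleanly. The presentation could be shortened to a single sentence invoking Lemma \ref{DRWIsetrelation}, but I would prefer to unfold all three implications explicitly so that the reader sees exactly which segment of the inclusion chain is being used in each step.
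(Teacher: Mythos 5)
Your proof is correct and follows exactly the intended route: the corollary is stated immediately after Lemma \ref{DRWIsetrelation} precisely so that each implication is obtained by the sandwich argument you describe, combining the inclusion chain with the defining equalities $\ir_c(X)=\mathcal{S}_c(X)$, $\mathcal{D}_c(X)$, $\mathsf{RD}(X)$, $\mathsf{WD}(X)$ for the respective classes. No gaps; the unfolded three-step presentation is fine, though a one-line citation of Lemma \ref{DRWIsetrelation} would suffice.
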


\begin{theorem}\label{soberequiv} (\cite[Theorem 6.6]{XSXZ-2020}) For a $T_0$ space $X$, the following conditions are equivalent:
	\begin{enumerate}[\rm (1)]
		\item $X$ is sober.
		\item $X$ is a $\mathsf{DC}$ $d$-space.
        \item $X$ is a well-filtered $\mathsf{DC}$ space.
		\item $X$ is a well-filtered Rudin space.
		\item $X$ is a well-filtered $\mathsf{WD}$ space.
	\end{enumerate}
\end{theorem}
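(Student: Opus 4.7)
The plan is to close the cycle $(1)\Rightarrow(3)\Rightarrow(4)\Rightarrow(5)\Rightarrow(1)$ together with the two-way bridge $(1)\Leftrightarrow(2)$, relying on the inclusion chain $\mathcal{S}_c(X)\subseteq\mathcal{D}_c(X)\subseteq\mathsf{RD}(X)\subseteq\mathsf{WD}(X)\subseteq\ir_c(X)$ from Lemma~\ref{DRWIsetrelation}, Corollary~\ref{SDRWspacerelation}, and the recalled implications sober $\Rightarrow$ well-filtered $\Rightarrow$ $d$-space.

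For (1)$\Rightarrow$(2) and (1)$\Rightarrow$(3), if $X$ is sober then every $A\in\ir_c(X)$ equals $\overline{\{x\}}$ for a unique $x\in X$, and since $\{x\}$ is directed with $A=\overline{\{x\}}$, we get $A\in\mathcal{D}_c(X)$, so $X$ is a $\mathsf{DC}$ space; combining with the fact that sobriety yields both the $d$-space and well-filtered properties produces (2) and (3) at once. For (3)$\Rightarrow$(4) and (4)$\Rightarrow$(5) I would simply invoke the inclusions $\mathcal{D}_c(X)\subseteq\mathsf{RD}(X)\subseteq\mathsf{WD}(X)$ of Lemma~\ref{DRWIsetrelation}, which preserve the well-filteredness hypothesis throughout. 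For (2)$\Rightarrow$(1), fix $A\in\ir_c(X)$; the $\mathsf{DC}$ hypothesis supplies $D\in\mathcal{D}(X)$ with $A=\overline{D}$, the $d$-space hypothesis gives $x:=\vee D\in X$, and the inclusion $\mathcal O(X)\subseteq\sigma(X)$ forces $x\in\overline{D}$, so that $A=\overline{D}=\overline{\{x\}}$; the $T_0$ separation axiom then yields uniqueness.

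The critical step is (5)$\Rightarrow$(1), and it turns out to be a one-liner once the right test map has been chosen. Given $X$ well-filtered and $\mathsf{WD}$ and an arbitrary $A\in\ir_c(X)$, I would apply the defining property of $A\in\mathsf{WD}(X)$ from Definition~\ref{definition:2.8} to the identity map $\mathrm{id}_X:X\to X$, which is a continuous map into the well-filtered space $X$ itself; this produces a unique $y_A\in X$ with $A=\mathrm{id}_X(A)=\overline{\{y_A\}}$, i.e., precisely sobriety. I expect the only conceptual obstacle to be noticing that $\mathrm{id}_X$ is the admissible test map that unlocks the $\mathsf{WD}$ hypothesis; once this is observed, the uniqueness clause built into Definition~\ref{definition:2.8} delivers both existence and uniqueness of a generic point for $A$ in a single stroke, so no further topological input is needed to complete the cycle.
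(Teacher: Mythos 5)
Your proposal is correct. Note that the paper itself does not prove this theorem: it is quoted verbatim from \cite[Theorem 6.6]{XSXZ-2020}, so there is no in-paper argument to compare against. Your cycle $(1)\Leftrightarrow(2)$, $(1)\Rightarrow(3)\Rightarrow(4)\Rightarrow(5)\Rightarrow(1)$ is a clean and complete reconstruction: the implications $(3)\Rightarrow(4)\Rightarrow(5)$ are immediate from the inclusions $\mathcal{D}_c(X)\subseteq\mathsf{RD}(X)\subseteq\mathsf{WD}(X)\subseteq\ir_c(X)$ of Lemma~\ref{DRWIsetrelation}, and your identity-map trick for $(5)\Rightarrow(1)$ is exactly the direct argument underlying Corollary~\ref{WF space charac by WD set} (well-filteredness forces $\mathsf{WD}(X)\subseteq\mathcal{S}_c(X)$, since $\mathrm{id}_X$ is an admissible test map); the paper instead derives that corollary from the general adequate-category machinery of Section~3. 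The only place where you are slightly terse is $(2)\Rightarrow(1)$: from $x=\vee D\in\overline{D}$ you get $\overline{\{x\}}\subseteq\overline{D}$, but to conclude $\overline{D}=\overline{\{x\}}$ you should also record the reverse inclusion, which holds because $x$ is an upper bound of $D$, so $D\subseteq\mathord{\downarrow}x=\overline{\{x\}}$ and hence $\overline{D}\subseteq\overline{\{x\}}$. This is a one-line addition, not a gap in the idea.
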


\section{Full adequate subcategories of $\mathbf{Top}_0$}

In this section, we recall some discussions and results about full adequate subcategories of $\mathbf{Top}_0$ in \cite{XXQ1} that will be used in the next section.

For a full subcategory $\mathbf{K}$ of $\mathbf{Top}_0$, the objects of $\mathbf{K}$ will be called $\mathbf{K}$-spaces. In \cite{Keimel-Lawson}, Keimel and Lawson proposed the following properties:

($\mathrm{K}_1$) Homeomorphic copies of $\mathbf{K}$-spaces are $\mathbf{K}$-spaces.

($\mathrm{K}_2$) All sober spaces are $\mathbf{K}$-spaces or, equivalently, $\mathbf{Sob}\subseteq \mathbf{K}$.

($\mathrm{K}_3$) In a sober space S, the intersection of any family of $\mathbf{K}$-subspaces is a $\mathbf{K}$-space.

($\mathrm{K}_4$) Continuous maps $f : S \longrightarrow T$ between sober spaces $S$ and $T$ are $\mathbf{K}$-continuous, that is, for every $\mathbf{K}$-subspace $K$ of $T$ , the inverse image $f^{-1}(K)$ is a $\mathbf{K}$-subspace of $S$.

\begin{definition}\label{K reflection} (\cite[Definition 4.1]{XXQ1})
Let $\mathbf{K}$ be a full subcategory of $\mathbf{Top}_0$ and $X$ a $T_{0}$ space. A $\mathbf{K}$-\emph{reflection} of $X$ is a pair $\langle \widetilde{X},\eta_X \rangle$ consisting of a $\mathbf{K}$-space $\widetilde{X}$ and a continuous mapping $\eta_X :X\rightarrow \widetilde{X}$ satisfying that for any continuous mapping $f:X\rightarrow Y$ to a $\mathbf{K}$-space, there exists a unique continuous mapping $f^{*}:\widetilde{X}\rightarrow Y$ such that $f^{*}\circ \eta_X =f$, that is, the following diagram commutes.

\begin{equation*}
\centerline{
\xymatrix{ X \ar[dr]_{f} \ar[r]^-{\eta_X}&  \widetilde{X}\ar@{.>}[d]^{f^{*}} & \\
  & Y  & &
   }}
\end{equation*}
\end{definition}

By a standard argument, $\mathbf{K}$-reflections, if they exist, are unique up to homeomorphism. We shall use $X^k$ to denote the space of the $\mathbf{K}$-reflection of $X$ if it exists.

The $\mathbf{Sob}$-reflections and $\mathbf{Top}_w$-reflections are exactly the sobrifications and well-filtered reflections respectively. The $\mathbf{Top}_d$-reflections are usually called the $d$-\emph{reflections} (see \cite{Ershov-1999, Keimel-Lawson, Wyler}). For simplicity, the well-filtered reflections (resp., $\mathbf{Top}_{dc}$-reflections, $\mathbf{Top}_{rd}$-reflections, $\mathbf{Top}_{wd}$-reflections, $\mathbf{Top}_{cut}$-reflections) will be called the $\mathbf{WF}$-\emph{reflections} (resp., $\mathbf{DC}$-\emph{reflections}, $\mathbf{RD}$-\emph{reflections}, $\mathbf{WD}$-\emph{reflections}, $\mathbf{Cut}$-\emph{reflections}).

In what follows in this section, $\mathbf{K}$ always refers to a full subcategory $\mathbf{Top}_0$ containing $\mathbf{Sob}$, that is, $\mathbf{K}$ has ($\mathrm{K}_2$). $\mathbf{K}$ is said to be \emph{closed with respect to homeomorphisms} if $\mathbf{K}$ has ($\mathrm{K}_1$). We call $\mathbf{K}$ a \emph{Keimel-Lawson category} if it satisfies ($\mathrm{K}_1$)-($\mathrm{K}_4$).

\begin{remark}\label{four categories closed with respect to homeomorphisms}  It is straightforward to verify that for $\mathbf{K}\in \{\mathbf{Sob}, \mathbf{Top}_w, \mathbf{Top}_d, \mathbf{Top}_{dc}, \mathbf{Top}_{rd}, \mathbf{Top}_{wd},\mathbf{QSob},$ $\mathbf{WSob}, \mathbf{Top}_{cut}\}$, $\mathbf{K}$ is closed with respect to homeomorphisms.
\end{remark}

\begin{definition}\label{K subset} (\cite[Definition 3.2]{XXQ1})
	 A subset $A$ of a $T_0$ space $X$ is called a $\mathbf{K}$-\emph{set}, provided for any continuous mapping $ f:X\longrightarrow Y$
to a $\mathbf{K}$-space $Y$, there exists a unique $y_A\in Y$ such that $\overline{f(A)}=\overline{\{y_A\}}$.
Denote by $\mathbf{K}(X)$ the set of all closed $\mathbf{K}$-sets of $X$.
\end{definition}

By the above definition, $\mathsf{WD}(X)=\mathbf{Top}_w(X)$. Clearly, a subset $A$ of a space $X$ is a $\mathbf{K}$-set if{}f $\overline{A}$ is a $\mathbf{K}$-set and. For simplicity, let $\mathbf{d}(X)=\mathbf{Top}_d(X)$.

\begin{lemma}\label{SKIsetrelation} (\cite[Lemma 3.3, Corollary 3.4 and Proposition 3.8]{XXQ1}) Let $X$ be a $T_0$ space. Then
\begin{enumerate}[\rm (1)]
\item $\mathbf{Sob}(X)=\ir_c(X)$.
\item $\mathcal S_c(X)\subseteq\mathbf{K}(X)\subseteq\ir_c(X)$.
\item  $\mathcal{D}_c(X)\subseteq \mathbf{d}(X)\subseteq\mathsf{WD}(X)$.
\end{enumerate}
\end{lemma}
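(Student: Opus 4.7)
The plan is to handle the three statements in turn, working directly from Definition~\ref{K subset} and exploiting the implications already recorded: $\mathbf{Sob}\subseteq \mathbf{K}$ (assumption $(\mathrm{K}_2)$), and sober $\Rightarrow$ well-filtered $\Rightarrow$ $d$-space.

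\textbf{Part (1).} For the inclusion $\ir_c(X)\subseteq\mathbf{Sob}(X)$, I would note that continuous images of irreducible sets are irreducible, so for $A\in\ir_c(X)$ and a continuous $f\colon X\to Y$ with $Y$ sober, $\overline{f(A)}\in\ir_c(Y)$, which by sobriety equals $\overline{\{y_A\}}$ for a unique $y_A\in Y$. For the converse $\mathbf{Sob}(X)\subseteq\ir_c(X)$, the main idea is to test the defining property of a $\mathbf{Sob}$-set against the canonical sobrification $\eta_X\colon X\to X^s=P_H(\ir_c(X))$ from Remark~\ref{eta continuous}. Given a closed $\mathbf{Sob}$-set $A$, the definition yields a unique $y_A=B\in\ir_c(X)$ with $\overline{\eta_X(A)}=\overline{\{B\}}$. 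Since $\eta_X$ is a topological embedding with $\eta_X(x)=\overline{\{x\}}$ and the specialization order on $X^s$ is set inclusion, the two closed sets $\overline{\eta_X(A)}$ and $\overline{\{B\}}$ can be described as $\{C\in\ir_c(X):C\subseteq A\}$ and $\{C\in\ir_c(X):C\subseteq B\}$ respectively; picking the principal closures $\overline{\{x\}}$ for $x\in X$ then forces $A=B$, so $A\in\ir_c(X)$.

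\textbf{Part (2).} For $\mathcal S_c(X)\subseteq\mathbf{K}(X)$, take $A=\overline{\{x\}}$ and any continuous $f\colon X\to Y$ with $Y$ a $\mathbf{K}$-space; by continuity $\overline{f(\overline{\{x\}})}=\overline{\{f(x)\}}$, and uniqueness of the representing point is immediate from $T_0$. For $\mathbf{K}(X)\subseteq\ir_c(X)$, the key observation is that since $\mathbf{Sob}\subseteq\mathbf{K}$, the class of continuous maps to $\mathbf{K}$-spaces contains the class of continuous maps to sober spaces, so every $\mathbf{K}$-set is automatically a $\mathbf{Sob}$-set; by part~(1) it lies in $\ir_c(X)$.

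\textbf{Part (3).} For $\mathcal D_c(X)\subseteq\mathbf{d}(X)$, take $A=\overline{D}$ with $D$ directed and a continuous $f\colon X\to Y$ to a $d$-space $Y$. Then $f(D)$ is directed in the dcpo $\Omega Y$, so $y_A=\bigvee f(D)$ exists; since $\mathcal O(Y)\subseteq\sigma(Y)$ every open neighborhood of $y_A$ meets $f(D)$, giving $y_A\in\overline{f(D)}=\overline{f(A)}$, while $f(D)\subseteq\mathord{\downarrow}y_A=\overline{\{y_A\}}$ gives the reverse containment; uniqueness again follows from $T_0$. For $\mathbf{d}(X)\subseteq\mathsf{WD}(X)=\mathbf{Top}_w(X)$, note that every well-filtered space is a $d$-space, so the defining condition of a $d$-set is a fortiori a test against all well-filtered spaces.

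The only nontrivial step is the reverse inclusion in part~(1); the rest is essentially a bookkeeping exercise with the fact that ``$\mathbf{K}$-set'' is an antitone condition in the class $\mathbf{K}$. The point I would be most careful about is verifying that $\overline{\eta_X(A)}$ in $X^s$ really equals $\{C\in\ir_c(X):C\subseteq A\}$, since this is what lets the sobrification detect irreducibility of $A$.
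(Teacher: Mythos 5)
Your proof is correct, and the paper itself offers no proof of this lemma (it is imported verbatim from \cite[Lemma 3.3, Corollary 3.4 and Proposition 3.8]{XXQ1}), where the argument is essentially the one you give: irreducibility of continuous images plus the sobrification test for part (1), antitonicity of the $\mathbf{K}$-set condition in $\mathbf{K}$ for part (2), and directed suprema in $d$-spaces for part (3). The one step you flag as delicate, namely $\overline{\eta_X(A)}=\{C\in\ir_c(X):C\subseteq A\}=\Box A$ for closed $A$, is exactly the content of Lemma \ref{eta closure} already quoted in the paper, so nothing further is needed.
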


\begin{corollary}\label{SKIsetrelation 1}  Let $X$ be a $T_0$ space and $A, B\in \mathcal C(X)$. Then the following two conditions are equivalent:
\begin{enumerate}[\rm (1)]
\item $A=B$.
\item $\Box_{\mathbf{K}(X)} A=\Box_{\mathbf{K}(X)} B$.
\end{enumerate}
\end{corollary}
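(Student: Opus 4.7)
The implication (1) $\Rightarrow$ (2) is immediate from the definition of $\Box_{\mathbf{K}(X)}$, since if $A=B$ then trivially a closed $\mathbf{K}$-set is contained in $A$ exactly when it is contained in $B$. The real content is the reverse implication, and my plan is to prove it by reducing to the principal point closures, which are guaranteed to sit inside $\mathbf{K}(X)$.

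The key ingredient is Lemma \ref{SKIsetrelation}(2), which gives the chain $\mathcal{S}_c(X)\subseteq \mathbf{K}(X)\subseteq \ir_c(X)$. In particular, for every $x\in X$ the set $\overline{\{x\}}$ belongs to $\mathbf{K}(X)$. I will use this to recover the points of $A$ from the collection $\Box_{\mathbf{K}(X)} A$.

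So assume (2) holds and fix $x\in A$. Since $A\in\mathcal{C}(X)$ and $x\in A$, we have $\overline{\{x\}}\subseteq A$, hence $\overline{\{x\}}\in\Box_{\mathbf{K}(X)} A$. By hypothesis $\overline{\{x\}}\in \Box_{\mathbf{K}(X)} B$, which means $\overline{\{x\}}\subseteq B$, and in particular $x\in B$. This shows $A\subseteq B$, and the symmetric argument gives $B\subseteq A$, hence $A=B$. This finishes the proof.

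There is no real obstacle here; the whole argument is a one-line application of Lemma \ref{SKIsetrelation}(2). The only thing worth double-checking is that $\mathbf{K}$ contains $\mathbf{Sob}$ (so that $\mathcal{S}_c(X)\subseteq\mathbf{K}(X)$ indeed holds, via the sober setting used to define $\mathbf{K}$-sets), which is a standing assumption in this section.
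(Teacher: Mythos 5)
Your proof is correct and follows exactly the paper's own argument: both use Lemma \ref{SKIsetrelation}(2) to place each point closure $\overline{\{x\}}$ for $x\in A$ into $\Box_{\mathbf{K}(X)}A=\Box_{\mathbf{K}(X)}B$ and thereby conclude $A\subseteq B$, with the symmetric argument giving equality. Nothing further to add.
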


\begin{proof}
	(1) $\Rightarrow$ (2): Trivial.

(2) $\Rightarrow$ (1): Suppose that $\Box_{\mathbf{K}(X)} A=\Box_{\mathbf{K}(X)} B$. Then for each $a\in A$ and $b\in B$, by Lemma \ref{SKIsetrelation} we have that $\overline{\{a\}}\in\Box_{\mathbf{K}(X)} A=\Box_{\mathbf{K}(X)} B$ and $\overline{\{b\}}\in\Box_{\mathbf{K}(X)} B=\Box_{\mathbf{K}(X)} A$, that is, $a\in B$ and $b\in A$. Whence $A\subseteq B$ and $B\subseteq A$. Thus $A=B$.
\end{proof}

By Lemma \ref{SKIsetrelation}, $\{\Diamond_{\mathbf{K}(X)} U : U\in \mathcal O(X)\}$ is a topology on $\mathbf{K}(X)$. In the following, let $\eta_X : X\longrightarrow P_H(\mathbf{K}(X))$, $\eta_X(x)=\overline {\{x\}}$, be the canonical mapping from $X$ to $P_H(\mathbf{K}(X))$. When $X=\Sigma~\!\!P$ for some poset $P$, $\eta_X$ is simply denoted by $\eta_P$.

\begin{lemma}\label{eta closure} (\cite[Lemma 4.3]{XXQ1})
	For a $T_0$ space $X$ be and $A\subseteq X$, $\overline{\eta_X(A)}=\overline{\eta_X\left(\overline{A}\right)}=\overline{\Box A}=\Box \overline{A}$ in $P_H(\mathbf{K}(X))$.
\end{lemma}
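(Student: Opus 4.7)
The goal is to show the four sets $\overline{\eta_X(A)}$, $\overline{\eta_X(\overline{A})}$, $\overline{\Box A}$ and $\Box\overline{A}$ coincide inside $P_H(\mathbf{K}(X))$. My plan is to establish the easy inclusions first, exploit the observation that $\Box\overline{A}$ is already closed, and then close the loop by showing $\Box\overline{A}\subseteq\overline{\eta_X(A)}$ directly.

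The key topological observation is that $\Box_{\mathbf{K}(X)}\overline{A}$ is closed in $P_H(\mathbf{K}(X))$: its complement inside $\mathbf{K}(X)$ equals $\{B\in\mathbf{K}(X):B\not\subseteq\overline{A}\}=\{B\in\mathbf{K}(X):B\cap(X\setminus\overline{A})\neq\emptyset\}=\Diamond_{\mathbf{K}(X)}(X\setminus\overline{A})$, and this is a basic open since $X\setminus\overline{A}$ is open in $X$. Once this is noted, the chain of easy inclusions essentially writes itself. For each $x\in\overline{A}$ one has $\eta_X(x)=\overline{\{x\}}\subseteq\overline{A}$, so $\eta_X(A)\subseteq\eta_X(\overline{A})\subseteq\Box\overline{A}$, and trivially $\Box A\subseteq\Box\overline{A}$. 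Taking closures and using that $\Box\overline{A}$ is already closed yields $\overline{\eta_X(A)}\subseteq\overline{\eta_X(\overline{A})}\subseteq\Box\overline{A}$ and $\overline{\eta_X(A)}\subseteq\overline{\Box A}\subseteq\Box\overline{A}$.

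It remains to prove $\Box\overline{A}\subseteq\overline{\eta_X(A)}$, which is where the only real content lies. I would argue from the definition of closure in $P_H(\mathbf{K}(X))$: take $B\in\Box\overline{A}$ and any basic open neighborhood $\Diamond_{\mathbf{K}(X)} U$ of $B$, so $B\cap U\neq\emptyset$. Pick $x\in B\cap U$; since $B\subseteq\overline{A}$ we have $x\in\overline{A}$, and $U$ is an open neighborhood of $x$, so $U\cap A\neq\emptyset$. Any $a\in U\cap A$ satisfies $\eta_X(a)=\overline{\{a\}}\in\Diamond_{\mathbf{K}(X)} U$ (because $a\in\overline{\{a\}}\cap U$), so $\eta_X(A)\cap\Diamond_{\mathbf{K}(X)} U\neq\emptyset$, and hence $B\in\overline{\eta_X(A)}$.

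There is no genuine obstacle here; the only point worth isolating is the characterization of the complement of $\Box\overline{A}$ as $\Diamond(X\setminus\overline{A})$, after which everything reduces to the one-line remark that an open set $U$ meeting the closure of $A$ must meet $A$ itself.
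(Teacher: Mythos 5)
The paper does not actually prove this lemma; it is quoted verbatim from \cite[Lemma 4.3]{XXQ1}, so there is no internal proof to compare yours against. Judged on its own, your argument is the standard and correct one for three of the four terms: identifying the complement of $\Box\,\overline{A}$ in $\mathbf{K}(X)$ with $\Diamond(X\setminus\overline{A})$ (hence $\Box\,\overline{A}$ is closed), the inclusions $\eta_X(A)\subseteq\eta_X(\overline{A})\subseteq\Box\,\overline{A}$ via $\mathcal S_c(X)\subseteq\mathbf{K}(X)$, and the density argument giving $\Box\,\overline{A}\subseteq\overline{\eta_X(A)}$ are all sound. (Restricting attention to neighbourhoods of the form $\Diamond U$ is legitimate here because $\mathbf{K}(X)\subseteq\ir_c(X)$, so these sets form a topology rather than a mere subbase.) This yields $\overline{\eta_X(A)}=\overline{\eta_X(\overline{A})}=\Box\,\overline{A}$ together with $\overline{\Box A}\subseteq\Box\,\overline{A}$.

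The gap is the remaining inclusion $\Box\,\overline{A}\subseteq\overline{\Box A}$, which you derive from the chain $\overline{\eta_X(A)}\subseteq\overline{\Box A}\subseteq\Box\,\overline{A}$. The first link is asserted without justification and does not follow from anything you established: it would require $\overline{\{a\}}\in\overline{\Box A}$ for every $a\in A$, and for non-closed $A$ this can fail outright. In the Sierpinski space $\Sigma~\!\!2$ with $A=\{1\}$ one has $\mathbf{K}(\Sigma~\!\!2)=\{\{0\},\{0,1\}\}$, so $\Box A=\emptyset$ and $\overline{\Box A}=\emptyset$, while $\Box\,\overline{A}=\mathbf{K}(\Sigma~\!\!2)\neq\emptyset$. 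Thus the statement with the literal term $\overline{\Box A}$ is false for arbitrary $A\subseteq X$; it is evidently meant to read $\overline{\Box\,\overline{A}}$ (equivalently, $A$ is meant to be closed, which is the only way the lemma is invoked later, e.g.\ in the proof of Theorem \ref{K-reflections coincide}, where it is applied to members of $\mathbf{K}_i(X)$). Under that reading the third term is the closure of the already-closed set $\Box\,\overline{A}$, there is nothing left to prove, and your argument is complete; as written, however, the step $\overline{\eta_X(A)}\subseteq\overline{\Box A}$ is the one place where the proof genuinely fails.
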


For the $\mathbf{K}$-reflections of $T_0$ spaces, the following lemma is crucial.

\begin{lemma}\label{K-lemmafstar} (\cite[Lemma 4.5]{XXQ1})
Let $X$ be a $T_0$ space and $f:X\longrightarrow Y$ a continuous mapping from $X$ to a $\mathbf{K}$-space $Y$. Then

\begin{enumerate}[\rm (1)]
\item the canonical mapping $\eta_X:X\longrightarrow P_H(\mathbf{K}(X))$ is a topological embedding, and

\item there exists a unique continuous mapping $f^* :P_H(\mathbf{K}(X))\longrightarrow Y$ such that $f^*\circ\eta_X=f$, that is, the following diagram commutes.
\begin{equation*}
\centerline{\xymatrix{
	X \ar[dr]_-{f} \ar[r]^-{\eta_X}
	&P_H(\mathbf{K}(X))\ar@{.>}[d]^-{f^*}\\
	&Y}}
\end{equation*}	
\noindent The unique continuous mapping $f^* :P_H(\mathbf{K}(X))\longrightarrow Y$ is defined by $f^*(A)=y_A$, where $y_A$ is the unique point of $Y$ such that $\overline{f(A)}=\overline{\{y_A\}}$.
\end{enumerate}
\end{lemma}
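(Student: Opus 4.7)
The proof plan has two parts, corresponding to the two claims of the lemma.

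For part (1), I would invoke Remark~\ref{remark:2.1}(1) directly. That remark says that whenever $\mathcal{S}_c(X) \subseteq \mathcal{G}$, the canonical mapping $\eta_X : X \to P_H(\mathcal{G})$ is an order and topological embedding. By Lemma~\ref{SKIsetrelation}(2) we have $\mathcal{S}_c(X) \subseteq \mathbf{K}(X)$, so the hypothesis is met with $\mathcal{G} = \mathbf{K}(X)$, and (1) follows.

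For part (2), I would first define the candidate map. Since $Y$ is a $\mathbf{K}$-space and each $A \in \mathbf{K}(X)$ is a $\mathbf{K}$-set, there is by definition a unique $y_A \in Y$ with $\overline{f(A)} = \overline{\{y_A\}}$; set $f^{\ast}(A) = y_A$. The commutativity $f^{\ast} \circ \eta_X = f$ is immediate: for $x \in X$, $\overline{f(\overline{\{x\}})} = \overline{\{f(x)\}}$ since $f(x) \in f(\overline{\{x\}}) \subseteq \overline{\{f(x)\}}$, so by uniqueness $y_{\overline{\{x\}}} = f(x)$. For continuity, I would compute, for $V \in \mathcal{O}(Y)$,
\begin{equation*}
(f^{\ast})^{-1}(V) = \{A \in \mathbf{K}(X) : y_A \in V\} = \{A \in \mathbf{K}(X) : \overline{f(A)} \cap V \neq \emptyset\},
\end{equation*}
and the last set equals $\{A : f(A) \cap V \neq \emptyset\} = \Diamond_{\mathbf{K}(X)} f^{-1}(V)$ because $V$ is open. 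Since $f^{-1}(V) \in \mathcal{O}(X)$, this is subbasic open in $P_H(\mathbf{K}(X))$, giving continuity of $f^{\ast}$.

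The one slightly delicate step is uniqueness. Suppose $g : P_H(\mathbf{K}(X)) \to Y$ is continuous with $g \circ \eta_X = f$. The key observation is that the specialization order on $P_H(\mathbf{K}(X))$ is set inclusion (by Remark~\ref{remark:2.1}(1) and $\mathcal{S}_c(X) \subseteq \mathbf{K}(X)$). Fix $A \in \mathbf{K}(X)$. For each $x \in A$, $\overline{\{x\}} \subseteq A$, so $\eta_X(x) \leq A$ in $P_H(\mathbf{K}(X))$; applying the monotone map $g$ gives $f(x) = g(\eta_X(x)) \leq g(A)$, i.e.\ $f(x) \in \overline{\{g(A)\}}$. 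Thus $\overline{f(A)} \subseteq \overline{\{g(A)\}}$. Conversely, the identity $g \circ \eta_X = f$ and continuity of $g$ yield $g(A) \in g(\overline{\eta_X(A)}) \subseteq \overline{g(\eta_X(A))} = \overline{f(A)}$, hence $\overline{\{g(A)\}} \subseteq \overline{f(A)}$. Therefore $\overline{f(A)} = \overline{\{g(A)\}}$, and uniqueness of $y_A$ forces $g(A) = y_A = f^{\ast}(A)$.

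The main obstacle is recognising that the specialization-order argument in the uniqueness step carries all the weight: once one exploits that $\leq_{P_H(\mathbf{K}(X))}$ coincides with $\subseteq$ and that $g$ is automatically monotone, both inclusions $\overline{f(A)} \subseteq \overline{\{g(A)\}}$ and $\overline{\{g(A)\}} \subseteq \overline{f(A)}$ fall out cleanly. All remaining pieces are formal unwindings of Definition~\ref{K subset}, Lemma~\ref{SKIsetrelation}, and Remark~\ref{remark:2.1}(1).
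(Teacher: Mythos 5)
Your proposal is correct. Note that the paper itself gives no proof of this lemma --- it is imported verbatim as \cite[Lemma 4.5]{XXQ1} --- so there is no in-paper argument to compare against; your proof is the standard one that the cited source follows. All three pieces check out: part (1) via Remark~\ref{remark:2.1}(1) together with $\mathcal S_c(X)\subseteq\mathbf{K}(X)$ from Lemma~\ref{SKIsetrelation}(2); the existence of $y_A$ and the identity $f^*\circ\eta_X=f$ from Definition~\ref{K subset} and continuity of $f$; and the computation $(f^*)^{-1}(V)=\Diamond_{\mathbf{K}(X)}f^{-1}(V)$, which is valid because $V$ is open (so $\overline{f(A)}\cap V\neq\emptyset$ if{}f $f(A)\cap V\neq\emptyset$). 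The only step you leave implicit is in the uniqueness argument, where $g(A)\in g\left(\overline{\eta_X(A)}\right)$ presupposes $A\in\overline{\eta_X(A)}$ in $P_H(\mathbf{K}(X))$; this is exactly Lemma~\ref{eta closure} (equivalently, it uses that $A$ is irreducible, since a basic neighbourhood of $A$ is a finite intersection $\bigcap_i\Diamond U_i$ and one needs a single point of $A$ lying in all the $U_i$). With that reference supplied, the two inclusions $\overline{f(A)}\subseteq\overline{\{g(A)\}}$ and $\overline{\{g(A)\}}\subseteq\overline{f(A)}$ give $g(A)=y_A$ by the $T_0$ property of $Y$, and the proof is complete.
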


From Lemma \ref{K-lemmafstar} we immediately deduce the following result.

\begin{theorem}\label{K-reflection} (\cite[Theorem 4.6]{XXQ1})
	Let $X$ be a $T_0$ space. If $P_H(\mathbf{K}(X))$ is a $\mathbf{K}$-space, then the pair $\langle X^k=P_H(\mathbf{K}(X)), \eta_X\rangle$ is a  $\mathbf{K}$-reflection of $X$.
\end{theorem}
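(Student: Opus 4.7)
The plan is to verify the three ingredients of Definition \ref{K reflection} in turn, essentially by packaging together the hypothesis with Lemma \ref{K-lemmafstar}. By assumption, $P_H(\mathbf{K}(X))$ is a $\mathbf{K}$-space, which handles the requirement that the codomain of the reflection map belong to $\mathbf{K}$. By Lemma \ref{K-lemmafstar}(1), the canonical mapping $\eta_X \colon X \longrightarrow P_H(\mathbf{K}(X))$ is a topological embedding; in particular it is continuous, so we have a valid arrow $\eta_X \colon X \longrightarrow X^k$ in $\mathbf{Top}_0$.

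It remains to establish the universal property. Given any continuous $f \colon X \longrightarrow Y$ with $Y$ a $\mathbf{K}$-space, Lemma \ref{K-lemmafstar}(2) supplies a unique continuous mapping $f^* \colon P_H(\mathbf{K}(X)) \longrightarrow Y$ with $f^* \circ \eta_X = f$, where $f^*(A) = y_A$ and $y_A$ is the unique point of $Y$ with $\overline{f(A)} = \overline{\{y_A\}}$ (such $y_A$ exists precisely because each $A \in \mathbf{K}(X)$ is a $\mathbf{K}$-set in the sense of Definition \ref{K subset}). This is exactly the factorization required by Definition \ref{K reflection}, so $\langle X^k, \eta_X\rangle$ qualifies as a $\mathbf{K}$-reflection of $X$.

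In short, this theorem is essentially a corollary of Lemma \ref{K-lemmafstar}: all the work lies in constructing $f^*$ and checking its continuity and uniqueness, which has already been done. The only genuinely new ingredient here is invoking the standing hypothesis that $P_H(\mathbf{K}(X))$ is itself a $\mathbf{K}$-space, without which the alleged reflection would not live in the target category. I do not foresee any obstacle; in particular there is no need to argue well-definedness of $f^*$ beyond what Lemma \ref{K-lemmafstar} already provides, and uniqueness of the reflection up to homeomorphism is then the standard abstract nonsense that is not part of this statement.
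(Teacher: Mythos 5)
Your proposal is correct and matches the paper's approach: the paper explicitly states that this theorem is "immediately deduced" from Lemma \ref{K-lemmafstar}, which is precisely your argument of combining the hypothesis that $P_H(\mathbf{K}(X))$ is a $\mathbf{K}$-space with the embedding and unique-factorization properties from that lemma to verify Definition \ref{K reflection}.
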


\begin{definition}\label{K-adequate}  (\cite[Definition 4.7]{XXQ1}) A full subcategory $\mathbf{K}$ of $\mathbf{Top}_0$ is called \emph{adequate} if for any $T_0$ space $X$, $P_H(\mathbf{K}(X))$ is a $\mathbf{K}$-space.
\end{definition}

\begin{theorem}\label{K-adequate reflective}  (\cite[Corollary 4.8]{XXQ1})
	If $\mathbf{K}$ is adequate, then $\mathbf{K}$ is reflective in $\mathbf{Top}_0$.
\end{theorem}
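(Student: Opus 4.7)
The plan is to deduce this corollary directly from Theorem \ref{K-reflection} together with the definition of adequacy, without any additional construction. Recall that a full subcategory $\mathbf{K}$ of $\mathbf{Top}_0$ is reflective if and only if every $T_0$ space admits a $\mathbf{K}$-reflection in the sense of Definition \ref{K reflection}; this is simply the pointwise formulation of the left adjoint to the inclusion functor.

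First I would fix an arbitrary $T_0$ space $X$ and invoke adequacy (Definition \ref{K-adequate}) to conclude that $P_H(\mathbf{K}(X))$ is itself a $\mathbf{K}$-space. With this hypothesis in hand, Theorem \ref{K-reflection} applies verbatim and produces the pair $\langle X^k = P_H(\mathbf{K}(X)),\eta_X\rangle$ together with the universal property: every continuous mapping $f\colon X\to Y$ into a $\mathbf{K}$-space $Y$ factors uniquely as $f = f^{*}\circ \eta_X$, where $f^{*}$ is the continuous mapping supplied by Lemma \ref{K-lemmafstar}.

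Next I would observe that this is exactly the statement that $\langle X^k,\eta_X\rangle$ is a $\mathbf{K}$-reflection of $X$, so the assignment $X\mapsto X^k$ on objects, together with the naturality of the canonical mappings $\eta_X$, provides a reflector $\mathbf{Top}_0\to \mathbf{K}$. (Functoriality on morphisms is automatic: given $g\colon X\to X'$ in $\mathbf{Top}_0$, the composite $\eta_{X'}\circ g\colon X\to (X')^{k}$ lands in a $\mathbf{K}$-space, so Theorem \ref{K-reflection} supplies a unique $g^{k}\colon X^{k}\to (X')^{k}$ with $g^{k}\circ\eta_X = \eta_{X'}\circ g$, and uniqueness gives the identity and composition laws.) Since $X$ was arbitrary, $\mathbf{K}$ is reflective in $\mathbf{Top}_0$.

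There is essentially no obstacle: all the real work has already been done in Lemma \ref{K-lemmafstar} and Theorem \ref{K-reflection}, which establish both that $\eta_X$ is a topological embedding and that the factoring map $f^{*}$ exists and is unique. The only point one might bother to spell out is that ``reflective subcategory'' can be phrased object-wise in terms of the existence of reflections, so the corollary is a one-line consequence once adequacy supplies the missing hypothesis of Theorem \ref{K-reflection}.
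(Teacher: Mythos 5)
Your proposal is correct and takes essentially the same route as the paper, which states this result as an immediate consequence of Definition \ref{K-adequate} (adequacy supplies that $P_H(\mathbf{K}(X))$ is a $\mathbf{K}$-space for every $T_0$ space $X$) combined with Theorem \ref{K-reflection}, together with the standard fact that object-wise existence of reflections in a full subcategory yields a left adjoint to the inclusion. Your additional remarks on functoriality are the usual bookkeeping and do not change the argument.
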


\begin{theorem}\label{four categories adequate}  (\cite[Proposition 5.1, Theorem 5.4, Theorem 5.14 and Theorem 5.17]{XXQ1}) $\mathbf{Sob}$,  $\mathbf{Top}_d$ and $\mathbf{Top}_w$ all are adequate. Moreover, every Keimel-Lawson category $\mathbf{K}$ is adequate. Therefore, they all are reflective in $\mathbf{Top}_0$.
\end{theorem}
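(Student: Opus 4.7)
The plan is to invoke Theorem \ref{K-adequate reflective}, which reduces the entire statement to proving that $P_H(\mathbf{K}(X))$ is a $\mathbf{K}$-space for every $T_0$ space $X$ and each category in question; reflectivity in $\mathbf{Top}_0$ then follows automatically. The $\mathbf{Sob}$ case is almost immediate: Lemma \ref{SKIsetrelation}(1) gives $\mathbf{Sob}(X)=\ir_c(X)$, so $P_H(\mathbf{Sob}(X))$ coincides with the canonical sobrification $X^s$ of Remark \ref{remark:2.1}(2), which is sober by construction.

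For a Keimel--Lawson category $\mathbf{K}$, I would work inside $X^s$. Since $\mathbf{K}(X)\subseteq \ir_c(X)$ by Lemma \ref{SKIsetrelation}(2), $P_H(\mathbf{K}(X))$ is a topological subspace of $X^s$ containing $\eta_X(X)$. Axiom $(\mathrm{K}_2)$ ensures that $X^s$ itself is a $\mathbf{K}$-space, and $(\mathrm{K}_1)$ together with $(\mathrm{K}_3)$ produce a smallest $\mathbf{K}$-subspace $X^k$ of $X^s$ containing $\eta_X(X)$, obtained as the intersection of all such $\mathbf{K}$-subspaces. The goal is to identify $P_H(\mathbf{K}(X))$ with $X^k$ as subspaces of $X^s$, from which $P_H(\mathbf{K}(X))$ is a $\mathbf{K}$-space by $(\mathrm{K}_1)$.

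For $P_H(\mathbf{K}(X))\subseteq X^k$, I would pick any $\mathbf{K}$-subspace $Z$ of $X^s$ containing $\eta_X(X)$ and show $\mathbf{K}(X)\subseteq Z$: given $A\in\mathbf{K}(X)$, the corestriction $\eta_X:X\to Z$ is continuous into a $\mathbf{K}$-space, so Definition \ref{K subset} yields a unique $y_A\in Z$ with $\overline{\eta_X(A)}=\overline{\{y_A\}}$ in $Z$, and a short computation using that point closures in $X^s$ are principal down-sets in $(\ir_c(X),\subseteq)$ forces $y_A=A$, giving $A\in Z$. For the reverse inclusion $X^k\subseteq P_H(\mathbf{K}(X))$, given $B\in X^k$ and a continuous $f:X\to Y$ into a $\mathbf{K}$-space $Y$, I would extend $f$ to the sobrification $f^s:X^s\to Y^s$; since $\eta_Y(Y)\cong Y$ is a $\mathbf{K}$-subspace of $Y^s$ by $(\mathrm{K}_1)$, axiom $(\mathrm{K}_4)$ makes $(f^s)^{-1}(\eta_Y(Y))$ a $\mathbf{K}$-subspace of $X^s$ containing $\eta_X(X)$, so $B\in (f^s)^{-1}(\eta_Y(Y))$; the unique $y_B\in Y$ with $\eta_Y(y_B)=f^s(B)=\overline{f(B)}$ is then the point required by Definition \ref{K subset}.

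For $\mathbf{Top}_d$ and $\mathbf{Top}_w$, I would finish by checking that these categories are themselves Keimel--Lawson. Axioms $(\mathrm{K}_1)$ and $(\mathrm{K}_2)$ are routine, while $(\mathrm{K}_3)$ and $(\mathrm{K}_4)$ reduce to the standard facts that, inside any sober space, $d$-subspaces (respectively, well-filtered subspaces) are stable under arbitrary intersection and under continuous pre-image; these can be verified using the Scott-topology characterization of $d$-subspaces and the compact-saturated characterization of well-filteredness. The main obstacle of the whole proof is the reconciliation carried out in the middle paragraph between Definition \ref{K subset}, whose universal property quantifies over all continuous maps into all $\mathbf{K}$-spaces, and the intrinsic sober-space description of $X^k$; the bridge is the sobrification functor, which promotes each probe map $f:X\to Y$ into a map $f^s:X^s\to Y^s$ to which $(\mathrm{K}_4)$ and then $(\mathrm{K}_3)$ can be applied.
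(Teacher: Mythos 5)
Your proposal is sound in outline and, for two of its three branches, follows the same path as the cited source \cite{XXQ1}: the reduction of reflectivity to adequacy via Theorem \ref{K-adequate reflective}, the observation that $\mathbf{Sob}(X)=\ir_c(X)$ makes the $\mathbf{Sob}$ case immediate, and the identification of $P_H(\mathbf{K}(X))$ with the smallest $\mathbf{K}$-subspace of $X^s$ containing $\eta_X(X)$ is essentially the argument behind Theorem 5.17 of \cite{XXQ1}. Both inclusions in your middle paragraph check out: the corestriction of $\eta_X$ to an arbitrary $\mathbf{K}$-subspace $Z\supseteq\eta_X(X)$, together with the computation of closures as in Lemma \ref{eta closure}, forces $y_A=\overline{A}=A\in Z$; and the pullback $(f^s)^{-1}(\eta_Y(Y))$ supplied by $(\mathrm{K}_1)$ and $(\mathrm{K}_4)$ correctly converts membership in the intersection into the universal property of Definition \ref{K subset}.

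Where you genuinely diverge from the source is the treatment of $\mathbf{Top}_d$ and $\mathbf{Top}_w$: Theorems 5.4 and 5.14 of \cite{XXQ1} prove adequacy of these two categories directly, by showing that $P_H(\mathbf{d}(X))$ is a $d$-space and that $P_H(\mathsf{WD}(X))$ is well-filtered, whereas you route both through the Keimel--Lawson axioms. For $\mathbf{Top}_d$ this is unproblematic, since Keimel and Lawson themselves verified $(\mathrm{K}_1)$--$(\mathrm{K}_4)$ for $d$-spaces. For $\mathbf{Top}_w$, however, your claim that $(\mathrm{K}_3)$ and $(\mathrm{K}_4)$ ``reduce to standard facts'' verifiable from the compact-saturated characterization of well-filteredness is the one real gap: the statement that arbitrary intersections of well-filtered subspaces of a sober space are again well-filtered is not a routine check --- it is the central theorem of the existence proof of well-filterifications in \cite{wu-xi-xu-zhao-19} and requires genuinely new tools (topological Rudin's lemma and the analysis of minimal closed sets meeting a filtered family of compact saturated sets), and the corresponding $(\mathrm{K}_4)$ statement needs a comparable argument. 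As written, the $\mathbf{Top}_w$ branch of your proof defers its only hard step to an unproved assertion; you should either quote the result that $\mathbf{Top}_w$ is a Keimel--Lawson category or fall back on the direct proof of Theorem 5.14 of \cite{XXQ1}.
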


The following lemma gives a characterization of $\mathbf{K}$-spaces by $\mathbf{K}$-sets.

\begin{lemma}\label{K-space charac by K-set} (\cite[Corollary 4.10]{XXQ1}) Let $\mathbf{K}$ be a full subcategory of $\mathbf{Top}_0$ containing $\mathbf{Sob}$ and $X$ a $T_0$ space. Suppose that $\mathbf{K}$ is adequate and closed with respect to homeomorphisms. Then the following two conditions are equivalent:
\begin{enumerate}[\rm (1)]
\item $X$ is a $\mathbf{K}$-space.
\item $\mathbf{K}(X)=\mathcal S_c(X)$.
\end{enumerate}
\end{lemma}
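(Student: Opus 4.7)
The proof is an equivalence; I will handle each direction separately and both should be short given the machinery already assembled.

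For $(1) \Rightarrow (2)$, the inclusion $\mathcal{S}_c(X) \subseteq \mathbf{K}(X)$ is immediate from Lemma \ref{SKIsetrelation}(2). For the reverse inclusion, let $A \in \mathbf{K}(X)$. Since $X$ is itself a $\mathbf{K}$-space, the identity $\mathrm{id}_X : X \to X$ is a continuous map into a $\mathbf{K}$-space, so the defining property of a $\mathbf{K}$-set (Definition \ref{K subset}) produces a unique $y_A \in X$ such that $\overline{\mathrm{id}_X(A)} = \overline{\{y_A\}}$. Because $A$ is closed, this reads $A = \overline{\{y_A\}}$, placing $A$ in $\mathcal{S}_c(X)$.

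For $(2) \Rightarrow (1)$, the plan is to observe that the hypothesis $\mathbf{K}(X) = \mathcal{S}_c(X)$ forces the canonical map $\eta_X : X \to P_H(\mathbf{K}(X))$, $x \mapsto \overline{\{x\}}$, to be a homeomorphism. Indeed, Lemma \ref{K-lemmafstar}(1) already guarantees that $\eta_X$ is a topological embedding, and surjectivity is automatic once $\mathbf{K}(X) = \mathcal{S}_c(X)$, since every element of the codomain is of the form $\overline{\{x\}}$ for some $x \in X$. Hence $X \cong P_H(\mathbf{K}(X))$.

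Now I invoke the two remaining hypotheses on $\mathbf{K}$. Adequacy (Definition \ref{K-adequate}) says $P_H(\mathbf{K}(X))$ is a $\mathbf{K}$-space, and closure under homeomorphisms (property $(\mathrm{K}_1)$) then transports this property across the homeomorphism to conclude that $X$ is a $\mathbf{K}$-space, completing the equivalence.

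Neither direction presents a real obstacle: the only subtlety worth flagging is that in direction $(1) \Rightarrow (2)$ one must legitimately use the identity map as the test continuous map into a $\mathbf{K}$-space (which is exactly what $(1)$ supplies), and in direction $(2) \Rightarrow (1)$ the payoff of the two categorical hypotheses on $\mathbf{K}$ is precisely that they close the loop once $X$ has been identified with $P_H(\mathbf{K}(X))$.
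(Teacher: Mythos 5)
Your proof is correct. The paper states this lemma without its own proof, citing \cite[Corollary 4.10]{XXQ1}; your argument --- testing the $\mathbf{K}$-set property of $A\in\mathbf{K}(X)$ against $\mathrm{id}_X$ (legitimate precisely because (1) makes $X$ itself a $\mathbf{K}$-space, and closedness of $A$ gives $A=\overline{\{y_A\}}$) for one direction, and for the other noting that $\mathbf{K}(X)=\mathcal S_c(X)$ makes the embedding $\eta_X:X\to P_H(\mathbf{K}(X))$ of Lemma \ref{K-lemmafstar} surjective, hence a homeomorphism, so that adequacy plus closure under homeomorphisms transfers the $\mathbf{K}$-property to $X$ --- is exactly the standard route through the machinery assembled in Section 3 (Lemma \ref{SKIsetrelation}, Lemma \ref{K-lemmafstar}, Definition \ref{K-adequate}), and I see no gaps.
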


\begin{corollary}\label{two Ks equiv}  Let $\mathbf{K}_1, \mathbf{K}_2$ be two full subcategories of $\mathbf{Top}_0$ containing $\mathbf{Sob}$. Suppose that both $\mathbf{K}_1$ and $\mathbf{K}_2$ are adequate and closed with respect to homeomorphisms. Then the following two conditions are equivalent:
\begin{enumerate}[\rm (1)]
\item $\mathbf{K}_1=\mathbf{K}_2$.
\item For any $T_0$ space $X$, $\mathbf{K}_1(X)=\mathbf{K}_2(X)$.
\end{enumerate}
\end{corollary}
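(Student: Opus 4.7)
The plan is to derive this corollary as an immediate formal consequence of Lemma \ref{K-space charac by K-set}, which characterizes $\mathbf{K}$-spaces internally as those $T_0$ spaces $X$ whose only closed $\mathbf{K}$-sets are point closures. Since both hypotheses of that lemma (adequacy and closure under homeomorphism) are assumed for $\mathbf{K}_1$ and $\mathbf{K}_2$, the lemma is directly applicable to each, and this is the only non-trivial input I will need.

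For the direction (1) $\Rightarrow$ (2), I note that $\mathbf{K}_1 = \mathbf{K}_2$ means they have the same objects, so the notion of ``$\mathbf{K}_i$-space'' is independent of $i$. Consequently, Definition \ref{K subset}, which quantifies over continuous maps $f \colon X \to Y$ into $\mathbf{K}_i$-spaces $Y$, produces the same collection of closed sets for both values of $i$; that is, $\mathbf{K}_1(X) = \mathbf{K}_2(X)$ for every $T_0$ space $X$.

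For (2) $\Rightarrow$ (1), let $X$ be a $\mathbf{K}_1$-space. By Lemma \ref{K-space charac by K-set} applied to $\mathbf{K}_1$, we have $\mathbf{K}_1(X) = \mathcal{S}_c(X)$. The hypothesis then gives $\mathbf{K}_2(X) = \mathbf{K}_1(X) = \mathcal{S}_c(X)$, and a second application of Lemma \ref{K-space charac by K-set} (this time to $\mathbf{K}_2$) yields that $X$ is a $\mathbf{K}_2$-space. Swapping the roles of $\mathbf{K}_1$ and $\mathbf{K}_2$ gives the reverse inclusion of objects, and since both are full subcategories of $\mathbf{Top}_0$, they coincide.

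There is essentially no obstacle in this proof; the work has already been done in establishing Lemma \ref{K-space charac by K-set}. The only point requiring mild care is verifying that both hypotheses of that lemma are in force for each of $\mathbf{K}_1$ and $\mathbf{K}_2$, which is exactly what the assumptions of the corollary guarantee.
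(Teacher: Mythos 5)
Your proof is correct and follows essentially the same route as the paper: the direction (1)~$\Rightarrow$~(2) is immediate from the definition of $\mathbf{K}$-sets, and (2)~$\Rightarrow$~(1) is obtained by applying Lemma~\ref{K-space charac by K-set} to each of $\mathbf{K}_1$ and $\mathbf{K}_2$ and using symmetry, exactly as in the paper's own argument.
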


\begin{proof} (1) $\Rightarrow$ (2): Trivial.

(2) $\Rightarrow$ (1): Suppose that $X$ is a $\mathbf{K}_1$-space. Then by (2) and Lemma \ref{K-space charac by K-set}, $\mathbf{K}_2(X)=\mathbf{K}_1(X)=\mathcal S_c(X)$, whence by Lemma \ref{K-space charac by K-set} again, $X$ is a $\mathbf{K}_2$-space. So $\mathbf{K}_1\subseteq_{full} \mathbf{K}_2$. Similarly we have $\mathbf{K}_2\subseteq_{full} \mathbf{K}_1$. Thus $\mathbf{K}_1=\mathbf{K}_2$.

\end{proof}

One immediately deduce the following two corollaries from Remark \ref{four categories closed with respect to homeomorphisms}, Theorem \ref{four categories adequate} and Lemma \ref{K-space charac by K-set}.

\begin{corollary}\label{WF space charac by WD set} (\cite[Corollary 7.11]{XSXZ-2020})
For a $T_0$ space $X$, the following two conditions are equivalent:
	\begin{enumerate}[\rm (1)]
		\item $X$ is well-filtered.
        \item $\wdd (X)=\mathcal S_c(X)$.
	\end{enumerate}
\end{corollary}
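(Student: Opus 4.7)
The plan is simply to instantiate Lemma \ref{K-space charac by K-set} with $\mathbf{K}=\mathbf{Top}_w$. The corollary's equivalence has exactly the form of that lemma, once we identify the abstract notion $\mathbf{K}(X)$ with the concrete notion $\wdd(X)$.

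First I would record the three ingredients required to apply Lemma \ref{K-space charac by K-set}: (a) $\mathbf{Top}_w$ contains $\mathbf{Sob}$, since sobriety implies well-filteredness (noted in the implication chain right after the definition of well-filteredness); (b) $\mathbf{Top}_w$ is closed with respect to homeomorphisms, by Remark \ref{four categories closed with respect to homeomorphisms}; and (c) $\mathbf{Top}_w$ is adequate, by Theorem \ref{four categories adequate}. All three are explicitly listed earlier in the excerpt, so no new work is needed.

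Next I would invoke the identity $\wdd(X)=\mathbf{Top}_w(X)$, which is stated immediately after Definition \ref{K subset}. This dictionary translates the conclusion of Lemma \ref{K-space charac by K-set} into the statement of the corollary: $X$ is a $\mathbf{Top}_w$-space (i.e., well-filtered) if and only if $\mathbf{Top}_w(X)=\mathcal S_c(X)$, i.e., $\wdd(X)=\mathcal S_c(X)$.

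Since every step is a direct citation, there is essentially no obstacle; the only conceptual point worth double-checking is that the notational convention $\wdd(X)=\mathbf{Top}_w(X)$ really agrees with Definition \ref{definition:2.8}, which it does because a well-filtered space $Y$ is $T_0$ and hence the closure $\overline{\{y_A\}}$ determines $y_A$ uniquely, matching exactly the uniqueness clause in Definition \ref{K subset}. The proof is therefore a one-line appeal to Lemma \ref{K-space charac by K-set}.
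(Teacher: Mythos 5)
Your proposal is correct and is exactly the paper's argument: the corollary is deduced by instantiating Lemma \ref{K-space charac by K-set} with $\mathbf{K}=\mathbf{Top}_w$, using Remark \ref{four categories closed with respect to homeomorphisms} and Theorem \ref{four categories adequate} for the hypotheses and the identity $\wdd(X)=\mathbf{Top}_w(X)$ noted after Definition \ref{K subset}. Nothing further is needed.
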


\begin{corollary}\label{d space charac by d set}
For a $T_0$ space $X$, the following two conditions are equivalent:
	\begin{enumerate}[\rm (1)]
		\item $X$ is a $d$-space.
        \item $\mathbf{d} (X)=\mathcal S_c(X)$.
	\end{enumerate}
\end{corollary}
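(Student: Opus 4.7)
The plan is to deduce this corollary as an immediate specialization of the general characterization of $\mathbf{K}$-spaces given by Lemma \ref{K-space charac by K-set}, taking $\mathbf{K} = \mathbf{Top}_d$. The whole argument reduces to checking that $\mathbf{Top}_d$ meets the three hypotheses of that lemma: it contains $\mathbf{Sob}$, it is adequate, and it is closed with respect to homeomorphisms.

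First I would verify these hypotheses in order. The inclusion $\mathbf{Sob} \subseteq \mathbf{Top}_d$ follows from the standard implication chain ``sobriety $\Rightarrow$ well-filteredness $\Rightarrow$ $d$-space'' recalled in Section~2 of the paper, so $\mathbf{Top}_d$ satisfies ($\mathrm{K}_2$). Adequacy of $\mathbf{Top}_d$ is given directly by Theorem \ref{four categories adequate}, and closure under homeomorphisms is recorded in Remark \ref{four categories closed with respect to homeomorphisms}. With these three facts in hand, Lemma \ref{K-space charac by K-set} applied to $\mathbf{K} = \mathbf{Top}_d$ yields at once the equivalence: $X$ is a $d$-space if and only if $\mathbf{d}(X) = \mathbf{Top}_d(X) = \mathcal{S}_c(X)$, which is precisely the claim.

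Since the result is a direct instantiation of the general lemma, there is no genuine obstacle here; the corollary is essentially a bookkeeping step recording that the general ``$\mathbf{K}$-space iff $\mathbf{K}(X) = \mathcal{S}_c(X)$'' principle specializes cleanly to $d$-spaces, in complete parallel with Corollary \ref{WF space charac by WD set} for well-filtered spaces. The only point worth a one-line remark in the written proof is the identification $\mathbf{d}(X) = \mathbf{Top}_d(X)$, which is just the notational abbreviation introduced after Definition \ref{K subset}.
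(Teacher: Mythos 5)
Your proposal is correct and coincides with the paper's own derivation: the authors obtain this corollary (together with Corollary \ref{WF space charac by WD set}) exactly by combining Remark \ref{four categories closed with respect to homeomorphisms}, Theorem \ref{four categories adequate} and Lemma \ref{K-space charac by K-set} with $\mathbf{K}=\mathbf{Top}_d$. Your added verification that $\mathbf{Sob}\subseteq\mathbf{Top}_d$ via the implication chain is a sensible bookkeeping detail the paper leaves implicit.
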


\begin{theorem}\label{K-reflections coincide}  Let $\mathbf{K}_1, \mathbf{K}_2$ be two full subcategories of $\mathbf{Top}_0$ containing $\mathbf{Sob}$ and $X$ a $T_0$ space. Suppose that both $\mathbf{K}_1$ and $\mathbf{K}_2$ are adequate and closed with respect to homeomorphisms. Then the following two conditions are equivalent:
\begin{enumerate}[\rm (1)]
\item $\mathbf{K}_1(X)=\mathbf{K}_2(X)$.
\item The $\mathbf{K}_1$-reflection of $X$ and the $\mathbf{K}_2$-reflection of $X$ coincide.

\end{enumerate}
\end{theorem}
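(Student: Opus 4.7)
The plan is to dispatch (1) $\Rightarrow$ (2) immediately from Theorem \ref{K-reflection}, then to extract (2) $\Rightarrow$ (1) by exploiting the fact that the specialization order on each Hoare power space is set inclusion, combined with the uniqueness of the mediating map in the universal property of reflections. For the forward direction, since $\mathbf{K}_1$ and $\mathbf{K}_2$ are adequate, Theorem \ref{K-reflection} gives that the $\mathbf{K}_i$-reflection of $X$ is the pair $\langle P_H(\mathbf{K}_i(X)), \eta_X\rangle$ with canonical mapping $x\mapsto \overline{\{x\}}$. If $\mathbf{K}_1(X)=\mathbf{K}_2(X)$, these are literally the same pair, so the two reflections coincide.

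For (2) $\Rightarrow$ (1), I would begin by assuming the two reflections coincide and unwinding this via the universal property: there must exist a homeomorphism $h\colon P_H(\mathbf{K}_1(X))\longrightarrow P_H(\mathbf{K}_2(X))$ satisfying $h\circ \eta_X=\eta_X$, equivalently $h(\overline{\{x\}})=\overline{\{x\}}$ for every $x\in X$. By Lemma \ref{SKIsetrelation} we have $\mathcal{S}_c(X)\subseteq \mathbf{K}_i(X)\subseteq \ir_c(X)$, so Remark \ref{remark:2.1} applies and tells us that the specialization order on each $P_H(\mathbf{K}_i(X))$ is set inclusion. Therefore $h$ is an order isomorphism between $(\mathbf{K}_1(X),\subseteq)$ and $(\mathbf{K}_2(X),\subseteq)$ that fixes every point closure.

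The remainder is a formal set-membership argument. For any $A\in \mathbf{K}_1(X)$ and any $x\in X$, the chain
\[
x\in A \iff \overline{\{x\}}\subseteq A \iff h(\overline{\{x\}})\subseteq h(A) \iff \overline{\{x\}}\subseteq h(A) \iff x\in h(A)
\]
forces $A=h(A)\in \mathbf{K}_2(X)$, yielding $\mathbf{K}_1(X)\subseteq \mathbf{K}_2(X)$; the reverse inclusion follows by running the same argument on $h^{-1}$. The only mildly delicate point I anticipate is extracting the compatibility $h\circ \eta_X=\eta_X$ from the bare statement that the reflections ``coincide'', which has to be interpreted as the existence of a mediating homeomorphism between the two universal arrows. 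Once that compatibility is in hand, the rest is essentially a tautology.
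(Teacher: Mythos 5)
Your proposal is correct, and direction (1) $\Rightarrow$ (2) is exactly the paper's. For (2) $\Rightarrow$ (1) you take a genuinely different, though closely related, route. The paper never upgrades the mediating maps to a homeomorphism: it constructs the two one-directional mediating maps $(\hat{\eta}_X)^{k}$ and $(\eta_X)^{k}$ from the universal property, then invokes the explicit formula of Lemma \ref{K-lemmafstar} (the mediating map sends $A$ to the unique $B_A$ with $\overline{\eta_X(A)}=\overline{\{B_A\}}$), combines it with Lemma \ref{eta closure} ($\overline{\eta_X(A)}=\Box \overline{A}$) to get $\Box B_A=\Box A$, and concludes $A=B_A\in\mathbf{K}_2(X)$ from Corollary \ref{SKIsetrelation 1}. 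You instead first establish that the mediating map is a homeomorphism $h$ with $h\circ\eta_X=\hat{\eta}_X$ (the standard two-triangle uniqueness argument, which you correctly flag as the one point needing care, and which goes through because $P_H(\mathbf{K}_i(X))$ is a $\mathbf{K}_i$-space by adequacy), then use Remark \ref{remark:2.1} and Lemma \ref{SKIsetrelation} to see that $h$ is an inclusion-order isomorphism fixing all point closures, and finish with the membership chain $x\in A\iff x\in h(A)$. That chain is in substance a re-derivation of Corollary \ref{SKIsetrelation 1}, so the two arguments share their combinatorial core; what differs is the input you extract from the universal property. Your version costs an extra step (proving $h$ is a homeomorphism, which the paper's argument does not need) but in exchange avoids the explicit formula for $f^*$ and Lemma \ref{eta closure}, making it somewhat more self-contained order-theoretically. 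Both are valid.
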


\begin{proof} (1) $\Rightarrow$ (2): By Theorem \ref{K-reflection}.

(2) $\Rightarrow$ (1): By Theorem \ref{K-reflection}, $\langle P_H(\mathbf{K}_1(X)), \eta_X\rangle$ is a $\mathbf{K}$-reflection of $X$ and $\langle P_H(\mathbf{K}_2(X)), \hat{\eta}_X\rangle$ is a $\mathbf{K}_2$-reflection of $X$, where $\eta_X : X\longrightarrow P_H(\mathbf{K}_1(X))$ and $\hat{\eta}_X : X\longrightarrow P_H(\mathbf{K}_2(X))$ are defined by $\eta_X(x)=\overline {\{x\}}$ and $\hat{\eta}_X(x)=\overline {\{x\}}$ for each $x\in X$, respectively. Since the $\mathbf{K}_1$-reflection of $X$ agrees with the $\mathbf{K}_2$-reflection of $X$, there exists a unique continuous mapping $(\hat{\eta}_X)^{k}:P_H(\mathbf{K}_1(X))\rightarrow P_H(\mathbf{K}_2(X))$ such that $(\hat{\eta}_X)^{k}\circ \eta_X =\hat{\eta}_X$, and also there exists a unique continuous mapping $({\eta}_X)^{k}:P_H(\mathbf{K}_2(X))\rightarrow P_H(\mathbf{K}_1(X))$ such that $({\eta}_X)^{k}\circ \hat{\eta}_X =\eta_X$, that is, the following two diagrams commute.

\begin{equation*}
\centerline{
\xymatrix{ X \ar[dr]_{\hat{\eta}_X} \ar[r]^-{\eta_X}&  P_H(\mathbf{K}_1(X))\ar@{.>}[d]^{(\hat{\eta}_X)^{k}} &\\
  & P_H(\mathbf{K}_2(X))  & &
   }
 \xymatrix{ X \ar[dr]_{\hat{\eta}_X} \ar[r]^-{\eta_X}&  P_H(\mathbf{K}_1(X))\ar@{<.}[d]^{(\eta_X)^{k}} &\\
  & P_H(\mathbf{K}_2(X))  & &
   } }
\end{equation*}

By Lemma \ref{K-lemmafstar}, the unique continuous mapping $(\hat{\eta}_X)^{k}:P_H(\mathbf{K}_1(X))\rightarrow P_H(\mathbf{K}_2(X))$  is defined by $(\hat{\eta}_X)^{k}(A)=B_A$, where $B_A$ is the unique point of $\mathbf{K}_2(X)$ such that $\overline{\eta_X(A)}=\overline{\{B_A\}}=\Box_{\mathbf{K}_2(X)} B_A$ in $P_H(\mathbf{K}_2(X))$.
In the same way, the unique continuous mapping $(\eta_X)^{k}:P_H(\mathbf{K}_2(X))\rightarrow P_H(\mathbf{K}_1(X))$  is defined by $(\eta_X)^{k}(B)=A_B$, where $A_B$ is the unique point of $\mathbf{K}_1(X)$ such that $\overline{\hat{\eta}_X(B)}=\overline{\{A_B\}}=\Box_{\mathbf{K}_1(X)} A_B$ in $P_H(\mathbf{K}_1(X))$. For each $A\in \mathbf{K}_1(X)$ and $B\in \mathbf{K}_2(X)$, by Lemma \ref{eta closure} we have that $\Box_{\mathbf{K}_1(X)} B_A=\overline{\{B_A\}}=\overline{\eta_X(A)}=\Box_{\mathbf{K}_1(X)} A$ and  $\Box_{\mathbf{K}_2(X)} A_B=\overline{\{A_B\}}=\overline{\hat{\eta}_X(B)}=\Box_{\mathbf{K}_2(X)} B$, and consequently, $A=B_A\in \mathbf{K}_2(X)$ and $B=A_B\in\mathbf{K}_1(X)$ by Corollary \ref{SKIsetrelation 1}. Thus $\mathbf{K}_1(X)=\mathbf{K}_2(X)$ and both $(\hat{\eta}_X)^{k}$ and $(\eta_X)^{k}$ are the identity mapping.

\end{proof}

\begin{corollary}\label{K-reflections coincide 1}  Let $\mathbf{Sob}\subseteq_{full}\mathbf{K}_1\subseteq_{full}\mathbf{K}_2\subseteq_{full} \mathbf{Top}_0$ and $X$ a $T_0$ space. Suppose that both $\mathbf{K}_1$ and $\mathbf{K}_2$ are adequate and closed with respect to homeomorphisms. Then the following conditions are equivalent:
\begin{enumerate}[\rm (1)]
\item $\mathbf{K}_1(X)=\mathbf{K}_2(X)$.
\item $X^{k_1}=P_H(\mathbf{K}_1(X))$ is a $\mathbf{K}_2$-space.
\item $\langle X^{k_1}=P_H(\mathbf{K}_1(X)), \eta_X\rangle)$ is a $\mathbf{K}_2$-reflection of $X$, where $\eta_X : X\longrightarrow P_H(\mathbf{K}_1(X))$ is defined by $\eta_X(x)=\overline {\{x\}}$ for each $x\in X$.
    \item $\langle X^{k_2}=P_H(\mathbf{K}_2(X)), \hat{\eta}_X\rangle)$ is a $\mathbf{K}_1$-reflection of $X$, where $\hat{\eta}_X : X\longrightarrow P_H(\mathbf{K}_2(X))$ is defined by $\hat{\eta}_X(x)=\overline {\{x\}}$ for each $x\in X$.
\item The $\mathbf{K}_1$-reflection of $X$ and the $\mathbf{K}_2$-reflection of $X$ coincide.
\end{enumerate}
\end{corollary}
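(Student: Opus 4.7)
My plan is to prove the five conditions equivalent by running the cycle $(1)\Rightarrow(2)\Rightarrow(3)\Rightarrow(5)\Rightarrow(1)$ together with $(1)\Rightarrow(4)\Rightarrow(5)$, so that each of (2), (3), (4) funnels back to (1) via (5). The equivalence $(1)\Leftrightarrow(5)$ is precisely Theorem~\ref{K-reflections coincide}, which I would invoke as a black box; the work is therefore concentrated in chaining (2), (3), (4) to (5).

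For the easy implications $(1)\Rightarrow(2),(3),(4)$: if $\mathbf{K}_1(X)=\mathbf{K}_2(X)$, then the lower Vietoris spaces $P_H(\mathbf{K}_1(X))$ and $P_H(\mathbf{K}_2(X))$ agree literally as topological spaces, with the same canonical embedding $\eta_X=\hat\eta_X$, so by adequacy of $\mathbf{K}_2$ and Theorem~\ref{K-reflection} the common space is a $\mathbf{K}_2$-space serving as the $\mathbf{K}_2$-reflection (and symmetrically as the $\mathbf{K}_1$-reflection). The implications $(3)\Rightarrow(5)$ and $(4)\Rightarrow(5)$ follow from the standard uniqueness of reflections up to a unique homeomorphism commuting with the canonical maps.

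The main obstacle is $(2)\Rightarrow(3)$ (equivalently, $(2)\Rightarrow(1)$): assuming only that $X^{k_1}$ happens to be a $\mathbf{K}_2$-space, I must upgrade this to the full universal property of a $\mathbf{K}_2$-reflection. The plan is to apply the $\mathbf{K}_2$-reflection property of $\langle X^{k_2},\hat\eta_X\rangle$ to the map $\eta_X:X\to X^{k_1}$, obtaining a unique continuous $\tilde\eta:X^{k_2}\to X^{k_1}$ with $\tilde\eta\circ\hat\eta_X=\eta_X$. By Lemma~\ref{K-lemmafstar}, $\tilde\eta(B)=y_B$ for each $B\in\mathbf{K}_2(X)$, where $y_B\in\mathbf{K}_1(X)$ is the unique element with $\overline{\eta_X(B)}=\overline{\{y_B\}}$ in $X^{k_1}$. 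Lemma~\ref{eta closure} rewrites the left side as $\Box_{\mathbf{K}_1(X)}B$, Remark~\ref{remark:2.1} rewrites the right side as $\Box_{\mathbf{K}_1(X)}y_B$, and Corollary~\ref{SKIsetrelation 1} then forces $B=y_B$ as closed subsets of $X$. Hence every $B\in\mathbf{K}_2(X)$ lies in $\mathbf{K}_1(X)$; combined with the reverse inclusion coming from the categorical hypothesis, this yields $\mathbf{K}_1(X)=\mathbf{K}_2(X)$, i.e., (1), and then (3) follows by the first paragraph.

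The delicate point is recognising that a single comparison arrow $\tilde\eta:X^{k_2}\to X^{k_1}$ is already enough: rather than building a two-sided homeomorphism between $X^{k_1}$ and $X^{k_2}$, I rely on Corollary~\ref{SKIsetrelation 1} to collapse the question into a pointwise identification of closed sets. This is essentially the technique of the $(2)\Rightarrow(1)$ step in Theorem~\ref{K-reflections coincide}, but adapted so that only the one-sided input from (2) is used.
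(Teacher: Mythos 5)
Your overall architecture---funnelling (2), (3), (4) into (5), invoking Theorem~\ref{K-reflections coincide} for $(1)\Leftrightarrow(5)$, and reading $(1)\Rightarrow(2),(3),(4)$ off the literal equality $P_H(\mathbf{K}_1(X))=P_H(\mathbf{K}_2(X))$---matches the paper's, which likewise reduces everything except the returns from (2) and (4) to Theorem~\ref{K-reflections coincide}. The genuine gap is in your $(2)\Rightarrow(1)$. The comparison-arrow computation is correct as far as it goes: for $B\in\mathbf{K}_2(X)$ it gives $\Box_{\mathbf{K}_1(X)}B=\Box_{\mathbf{K}_1(X)}y_B$, hence $B=y_B\in\mathbf{K}_1(X)$. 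But this proves $\mathbf{K}_2(X)\subseteq\mathbf{K}_1(X)$, which is exactly the inclusion that is already automatic from $\mathbf{K}_1\subseteq_{full}\mathbf{K}_2$: a $\mathbf{K}_2$-set must pass the point-closure test against \emph{more} target spaces than a $\mathbf{K}_1$-set, so the operator $\mathbf{K}\mapsto\mathbf{K}(X)$ is antitone in $\mathbf{K}$ (compare Lemma~\ref{SKIsetrelation}, where the smallest category $\mathbf{Sob}$ yields the largest family $\ir_c(X)$). The ``reverse inclusion'' $\mathbf{K}_1(X)\subseteq\mathbf{K}_2(X)$, which is the entire content of (1), does \emph{not} come from the categorical hypothesis---you have the monotonicity backwards---and it is never established anywhere in your argument.

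Moreover, this gap cannot be closed from condition (2) in the literal form you are using: adequacy of $\mathbf{K}_1$ already makes $P_H(\mathbf{K}_1(X))$ a $\mathbf{K}_1$-space, hence a $\mathbf{K}_2$-space, so that form of (2) holds for \emph{every} $T_0$ space, whereas (1) can fail (take $\mathbf{K}_1=\mathbf{Sob}$, $\mathbf{K}_2=\mathbf{Top}_w$ and $X=X_{coc}$ of Example~\ref{example:3.9}: $P_H(\ir_c(X))$ is sober and hence well-filtered, yet $\mathbf{Sob}(X)=\ir_c(X)\neq\mathcal S_c(X)=\wdd(X)$). The condition that actually carries information---and the one instantiated in Corollaries~\ref{WF reflection agrees with sobrification}--\ref{d reflection agrees with sobrification}---is the mirror statement that $P_H(\mathbf{K}_2(X))$ is a $\mathbf{K}_1$-space; from that, (4) is immediate, since the $\mathbf{K}_2$-universal property of $\langle X^{k_2},\hat\eta_X\rangle$ subsumes the $\mathbf{K}_1$-one once the reflection object lies in $\mathbf{K}_1$, and Theorem~\ref{K-reflections coincide} then yields (1). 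Note finally that the paper's own handling of $(2)\Rightarrow(1)$ is different from yours: it asserts that the $\mathbf{K}_1$-reflection, being a $\mathbf{K}_2$-space, is a $\mathbf{K}_2$-reflection and cites Theorem~\ref{K-reflections coincide}; your element-wise route, in the direction you ran it, only re-derives the trivial inclusion.
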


\begin{proof} (1) $\Rightarrow$ (2), (1) $\Rightarrow$ (3), (1) $\Rightarrow$ (4) and (3) $\Rightarrow$ (2): Trivial.

 (1) $\Leftrightarrow$ (5): By Theorem \ref{K-reflections coincide}.

(2) $\Rightarrow$ (1):  Since $\mathbf{K}_1$ is a full and adequate subcategory $\mathbf{Top}_0$ containing $\mathbf{Sob}$, $\langle X^{k_1}=P_H(\mathbf{K}_1(X)), \eta_X\rangle)$ is a $\mathbf{K}_1$-reflection of $X$, whence by (2) and $\mathbf{K}_1\subseteq_{full}\mathbf{K}_2$, $\langle X^{k_1}, \eta_X\rangle)$ is also a $\mathbf{K}_2$-reflection of $X$. Thus $\mathbf{K}_1(X)=\mathbf{K}_2(X)$ by Theorem \ref{K-reflections coincide}.

(4) $\Rightarrow$ (1):  Since $\mathbf{K}_2$ is a full and adequate subcategory $\mathbf{Top}_0$ containing $\mathbf{Sob}$, $\langle X^{k_2}=P_H(\mathbf{K}_2(X)), \hat{\eta}_X\rangle)$ is a $\mathbf{K}_2$-reflection of $X$. By (4), $\langle X^{k_2}=P_H(\mathbf{K}_2(X)), \hat{\eta}_X\rangle)$ is also a $\mathbf{K}_1$-reflection of $X$. Thus $\mathbf{K}_1(X)=\mathbf{K}_2(X)$ by Theorem \ref{K-reflections coincide}.

\end{proof}

By Remark \ref{four categories closed with respect to homeomorphisms}, Theorem \ref{K-adequate reflective}, Theorem \ref{four categories adequate} and Corollary \ref{K-reflections coincide 1}, we get the following three corollaries.

\begin{corollary}\label{WF reflection agrees with sobrification} (\cite[Theorem 7.16]{XSXZ-2020})
	For a $T_0$ space $X$, the following conditions are equivalent:
	\begin{enumerate}[\rm (1)]
		\item $X$ is a $\mathsf{WD}$ space, that is, $\ir_c(X)=\mathsf{WD}(X)$.
\item $X^w=P_H(\mathsf{WD}(X))$ is sober.
\item $\langle X^w=P_H(\mathsf{WD}(X)), \eta_X \rangle$ is a sobrification of $X$.
\item $\langle X^s=P_H(\ir_c(X)), \eta_X \rangle$ is a $\mathbf{WF}$-reflection of $X$.
\item The sobrification of $X$ and the $\mathbf{WF}$-reflection of $X$ coincide.
	\end{enumerate}
\end{corollary}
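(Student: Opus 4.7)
The plan is to reduce the equivalence (1)$\Leftrightarrow$(3)$\Leftrightarrow$(4)$\Leftrightarrow$(5) to Corollary \ref{K-reflections coincide 1} applied to the pair $\mathbf{K}_1=\mathbf{Sob}$ and $\mathbf{K}_2=\mathbf{Top}_w$, and then to splice in condition (2) by a short direct argument. Both categories contain $\mathbf{Sob}$, are closed under homeomorphisms by Remark \ref{four categories closed with respect to homeomorphisms}, and are adequate by Theorem \ref{four categories adequate}; the inclusion $\mathbf{Sob}\subseteq_{full}\mathbf{Top}_w$ holds because sobriety implies well-filteredness. Under the dictionary $\mathbf{Sob}(X)=\ir_c(X)$ (Lemma \ref{SKIsetrelation}) and $\mathbf{Top}_w(X)=\mathsf{WD}(X)$ (the observation right after Definition \ref{K subset}), the spaces $X^{k_1}$ and $X^{k_2}$ of Corollary \ref{K-reflections coincide 1} become $X^s=P_H(\ir_c(X))$ and $X^w=P_H(\mathsf{WD}(X))$ respectively.

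Under this translation, Corollary \ref{K-reflections coincide 1} immediately yields the equivalence of the following four statements: $\ir_c(X)=\mathsf{WD}(X)$ (our (1)); $\langle X^s,\eta_X\rangle$ is a $\mathbf{WF}$-reflection of $X$ (our (4)); $\langle X^w,\eta_X\rangle$ is a $\mathbf{Sob}$-reflection, i.e.\ a sobrification, of $X$ (our (3)); and the sobrification and the $\mathbf{WF}$-reflection of $X$ coincide (our (5)).

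It remains to interpolate (2). The direction (3)$\Rightarrow$(2) is trivial, since the underlying space of any sobrification is sober by construction (Remark \ref{eta continuous}(2)). For (2)$\Rightarrow$(3), note first that by Theorem \ref{K-reflection} together with the adequacy of $\mathbf{Top}_w$ (Theorem \ref{four categories adequate}), the pair $\langle X^w,\eta_X\rangle$ is always a $\mathbf{WF}$-reflection of $X$. Now assume $X^w$ is sober and let $f:X\to Y$ be continuous with $Y$ sober; then $Y$ is well-filtered, so the $\mathbf{WF}$-reflection universal property produces a unique continuous $f^*:X^w\to Y$ with $f^*\circ\eta_X=f$. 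This is precisely the sobrification universal property for $\langle X^w,\eta_X\rangle$, so (3) holds.

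The entire argument is a bookkeeping exercise on top of Corollary \ref{K-reflections coincide 1}; I do not anticipate any genuine obstacle, the only non-formal input being the elementary implication from sobriety to well-filteredness that was used to position $\mathbf{Sob}$ and $\mathbf{Top}_w$ in the hypotheses of that corollary.
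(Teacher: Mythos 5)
Your proof is correct and follows essentially the same route as the paper, which derives this corollary in one line from Remark \ref{four categories closed with respect to homeomorphisms}, Theorem \ref{four categories adequate} and Corollary \ref{K-reflections coincide 1} via exactly the instantiation $\mathbf{K}_1=\mathbf{Sob}$, $\mathbf{K}_2=\mathbf{Top}_w$ that you use to obtain (1)$\Leftrightarrow$(3)$\Leftrightarrow$(4)$\Leftrightarrow$(5). The one place you depart from the paper is condition (2), which you establish by a separate direct argument rather than reading it off from item (2) of Corollary \ref{K-reflections coincide 1}; this is in fact the better choice. Under the instantiation $\mathbf{K}_1=\mathbf{Sob}$, $\mathbf{K}_2=\mathbf{Top}_w$, the literal item (2) of that corollary asserts that $X^{k_1}=P_H(\ir_c(X))=X^s$ is a $\mathbf{Top}_w$-space, which is vacuously true ($X^s$ is sober, hence well-filtered) and therefore cannot by itself yield the nontrivial statement ``$X^w=P_H(\mathsf{WD}(X))$ is sober''; the intended reading, as the two companion corollaries following it confirm, is that $X^{k_2}=P_H(\mathbf{K}_2(X))$ is a $\mathbf{K}_1$-space. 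Your interpolation of (2) --- $\langle X^w,\eta_X\rangle$ is always a $\mathbf{WF}$-reflection by adequacy of $\mathbf{Top}_w$ and Theorem \ref{K-reflection}, and if $X^w$ is moreover sober then the universal property against well-filtered targets restricts to the universal property against the (fewer) sober targets --- is exactly the argument needed, and it quietly repairs the small indexing slip that the paper's one-line derivation glosses over.
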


\begin{corollary}\label{d reflection agrees with WF reflection}
	For a $T_0$ space $X$, the following conditions are equivalent:
	\begin{enumerate}[\rm (1)]
		\item $\mathsf{WD}(X)=\mathbf{d}(X)$.
\item $X^d=P_H(\mathbf{d}(X))$ is well-filtered.
\item $\langle X^d=P_H(\mathbf{d}(X)), \eta_X \rangle$ is a $\mathbf{WF}$-reflection of $X$.
\item $\langle X^w=P_H(\mathsf{WD}(X)), \eta_X \rangle$ is a $d$-reflection of $X$.
\item The $\mathbf{WF}$-reflection of $X$ and the $d$-reflection of $X$ coincide.
	\end{enumerate}
\end{corollary}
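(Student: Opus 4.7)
The approach is to instantiate Corollary~\ref{K-reflections coincide 1} with $\mathbf{K}_1=\mathbf{Top}_w$ and $\mathbf{K}_2=\mathbf{Top}_d$. The chain $\mathbf{Sob}\subseteq_{full}\mathbf{Top}_w\subseteq_{full}\mathbf{Top}_d$ is the implication chain ``sobriety $\Rightarrow$ well-filteredness $\Rightarrow$ $d$-space'' recalled in Section~2; that both categories are closed under homeomorphism is Remark~\ref{four categories closed with respect to homeomorphisms}, and that both are adequate is Theorem~\ref{four categories adequate}. Under this identification we have $\mathbf{K}_1(X)=\mathsf{WD}(X)$, $\mathbf{K}_2(X)=\mathbf{d}(X)$, $X^{k_1}=X^w=P_H(\mathsf{WD}(X))$ and $X^{k_2}=X^d=P_H(\mathbf{d}(X))$, so conditions (1), (3), (4), (5) of Corollary~\ref{K-reflections coincide 1} translate, respectively, to conditions (1), (4), (3), (5) of the present statement. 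This yields the equivalence of (1), (3), (4), and (5) at one stroke.

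Next I would fold condition (2), namely that $X^d$ is well-filtered, into the cycle by proving (5)$\Rightarrow$(2)$\Rightarrow$(3). For (5)$\Rightarrow$(2): adequacy of $\mathbf{Top}_w$ guarantees that $X^w=P_H(\mathsf{WD}(X))$ is itself well-filtered; if the $\mathbf{WF}$-reflection and the $d$-reflection coincide (in the sense of uniqueness of reflection up to homeomorphism), then $X^d$ is homeomorphic to $X^w$, and being well-filtered is invariant under homeomorphism. For (2)$\Rightarrow$(3): given any continuous $f:X\to Y$ with $Y$ well-filtered, $Y$ is in particular a $d$-space, so the universal property of the $d$-reflection $\langle X^d,\eta_X\rangle$ delivers a unique continuous $f^{*}:X^d\to Y$ with $f^{*}\circ\eta_X=f$; together with the assumption of (2), this exhibits $\langle X^d,\eta_X\rangle$ as a $\mathbf{WF}$-reflection of $X$, which is (3).

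The work is largely bookkeeping rather than new combinatorial content; the only mildly subtle step is noticing that condition (2) of the statement is \emph{not} literally the corresponding condition (2) of Corollary~\ref{K-reflections coincide 1}. That corollary's condition ``$X^{k_1}$ is a $\mathbf{K}_2$-space'' would here read ``$X^w$ is a $d$-space'', whereas the statement records the opposite direction ``$X^d$ is well-filtered''. This asymmetry is precisely what forces the separate two-step argument in the previous paragraph, and it hinges on the one-way implication that well-filteredness implies the $d$-space property, which is what lets the universal property of the $d$-reflection promote $X^d$ to a $\mathbf{WF}$-reflection.
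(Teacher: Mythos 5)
Your proposal is correct, and its core — instantiating Corollary~\ref{K-reflections coincide 1} with $\mathbf{K}_1=\mathbf{Top}_w$, $\mathbf{K}_2=\mathbf{Top}_d$ — is exactly what the paper does (the paper offers no further proof beyond citing Remark~\ref{four categories closed with respect to homeomorphisms}, Theorem~\ref{K-adequate reflective}, Theorem~\ref{four categories adequate} and Corollary~\ref{K-reflections coincide 1}). Where you go beyond the paper is in refusing to pretend that condition (2) of the statement is condition (2) of Corollary~\ref{K-reflections coincide 1}, and your observation there is sharper than you perhaps realize: as literally written, condition (2) of that corollary (``$X^{k_1}$ is a $\mathbf{K}_2$-space'') is \emph{vacuously} true, since $X^{k_1}=P_H(\mathbf{K}_1(X))$ is a $\mathbf{K}_1$-space by adequacy and $\mathbf{K}_1\subseteq_{full}\mathbf{K}_2$; the pattern of Corollaries~\ref{WF reflection agrees with sobrification} and~\ref{d reflection agrees with sobrification} makes clear the intended condition is ``$X^{k_2}=P_H(\mathbf{K}_2(X))$ is a $\mathbf{K}_1$-space'', which is precisely condition (2) of the present statement. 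Your two-step patch is sound in both halves: (5)$\Rightarrow$(2) because $X^w$ is well-filtered by adequacy and well-filteredness is a homeomorphism invariant, and (2)$\Rightarrow$(3) because the universal property of the $d$-reflection restricts to the subclass of well-filtered targets (this is the legitimate direction — a $\mathbf{K}_2$-reflection landing in $\mathbf{K}_1$ is automatically a $\mathbf{K}_1$-reflection when $\mathbf{K}_1\subseteq\mathbf{K}_2$). So your argument both recovers the paper's result and supplies the missing justification for the one clause the cited corollary does not literally cover.
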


\begin{corollary}\label{d reflection agrees with sobrification}
	For a $T_0$ space $X$, the following conditions are equivalent:
	\begin{enumerate}[\rm (1)]
		\item $\ir_c(X)=\mathbf{d}(X)$.
\item $X^d=P_H(\mathbf{d}(X))$ is sober.
\item $\langle X^d=P_H(\mathbf{d}(X)), \eta_X \rangle$ is a sobrification of $X$.
\item $\langle X^s=P_H(\ir_c(X)), \eta_X \rangle$ is a $d$-reflection of $X$.
\item The sobrification of $X$ and the $d$-reflection of $X$ coincide.
	\end{enumerate}
\end{corollary}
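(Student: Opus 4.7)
The approach is to specialise Corollary \ref{K-reflections coincide 1} to the chain of full subcategories $\mathbf{Sob} \subseteq_{full} \mathbf{Sob} \subseteq_{full} \mathbf{Top}_d \subseteq_{full} \mathbf{Top}_0$. The inclusion $\mathbf{Sob} \subseteq_{full} \mathbf{Top}_d$ is the standard fact sobriety $\Rightarrow$ $d$-space recalled in the preliminaries. By Remark \ref{four categories closed with respect to homeomorphisms} both $\mathbf{Sob}$ and $\mathbf{Top}_d$ are closed with respect to homeomorphisms, and by Theorem \ref{four categories adequate} both are adequate. Hence Corollary \ref{K-reflections coincide 1} applies with $\mathbf{K}_1 = \mathbf{Sob}$ and $\mathbf{K}_2 = \mathbf{Top}_d$.

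Next, I would translate the five equivalent conditions of Corollary \ref{K-reflections coincide 1} to the present setting. By Lemma \ref{SKIsetrelation} we have $\mathbf{K}_1(X)=\mathbf{Sob}(X)=\ir_c(X)$, while by definition $\mathbf{K}_2(X)=\mathbf{d}(X)$, so $X^{k_1}=X^s=P_H(\ir_c(X))$ and $X^{k_2}=X^d=P_H(\mathbf{d}(X))$. Condition (1) of Corollary \ref{K-reflections coincide 1} becomes condition (1) of the present statement; condition (3) becomes (4); condition (4) becomes (3); and condition (5) becomes (5). (Note that condition (2) of Corollary \ref{K-reflections coincide 1}, namely ``$X^s$ is a $d$-space'', is automatically true since $X^s$ is sober, so it is redundant here.) This yields the equivalence of (1), (3), (4), and (5).

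It remains to bring (2) into the loop by proving (2) $\Leftrightarrow$ (3). The implication (3) $\Rightarrow$ (2) is immediate from the definition of a sobrification, since the target of a sobrification must be sober. For (2) $\Rightarrow$ (3), assume $X^d$ is sober. Given any continuous map $f:X\to Y$ with $Y$ sober, the space $Y$ is in particular a $d$-space, so the universal property of the $d$-reflection $\eta_X:X\to X^d$ provides a unique continuous $f^{*}:X^d\to Y$ with $f^{*}\circ\eta_X=f$. Combined with sobriety of $X^d$, this verifies the sobrification universal property, so $\langle X^d, \eta_X\rangle$ is a sobrification of $X$.

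No step here is hard; the entire proof is routine bookkeeping within the reflection machinery of Section 3. The only mildly delicate point is keeping straight that the two canonical mappings $\eta_X$ appearing in (3) and (4) have different codomains ($X^d$ versus $X^s$), but both are just the map $x\mapsto\overline{\{x\}}$, so no confusion arises.
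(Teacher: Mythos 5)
Your proof is correct and follows essentially the paper's own route: the paper obtains this corollary directly by specializing Corollary \ref{K-reflections coincide 1} (together with Remark \ref{four categories closed with respect to homeomorphisms} and Theorem \ref{four categories adequate}) to $\mathbf{K}_1=\mathbf{Sob}$ and $\mathbf{K}_2=\mathbf{Top}_d$, exactly as you do. Your separate direct argument for (2) $\Leftrightarrow$ (3) is a sensible and in fact necessary supplement, since condition (2) of Corollary \ref{K-reflections coincide 1} as literally printed (``$X^{k_1}=P_H(\mathbf{K}_1(X))$ is a $\mathbf{K}_2$-space'') is vacuously true under $\mathbf{K}_1\subseteq_{full}\mathbf{K}_2$ and is evidently intended to read ``$X^{k_2}=P_H(\mathbf{K}_2(X))$ is a $\mathbf{K}_1$-space'', which is precisely the condition your appeal to the universal property of the $d$-reflection recovers.
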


By Lemma \ref{DRWIsetrelation}, Lemma \ref{SKIsetrelation}, Corollary \ref{WF reflection agrees with sobrification}, Corollary \ref{d reflection agrees with WF reflection} and Corollary \ref{d reflection agrees with sobrification}, we get the following corollary.

\begin{corollary}\label{DC space implies d reflection agrees with sobrification}
	If $X$ is a $\mathsf{DC}$ space, then
\begin{enumerate}[\rm (1)]
		\item $\ir_c(X)=\mathsf{WD}(X)=\mathsf{RD}(X)=\mathbf{d}(X)$.
\item $X^d=P_H(\mathbf{d}(X))$ is sober.
\item The sobrification of $X$, the $\mathbf{WF}$-reflection and the $d$-reflection of $X$ all coincide.
\item $\langle X^s=P_H(\ir_c(X)), \eta_X \rangle$ is both a sobrification of $X$ and a $d$-reflection of $X$, and it is also a $\mathbf{WF}$-reflection of $X$.
\end{enumerate}
\end{corollary}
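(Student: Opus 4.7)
The plan is to derive everything by collapsing the chain of inclusions from Lemma \ref{DRWIsetrelation} and Lemma \ref{SKIsetrelation}, and then invoking Corollaries \ref{WF reflection agrees with sobrification}, \ref{d reflection agrees with WF reflection}, and \ref{d reflection agrees with sobrification}. The argument is essentially a bookkeeping exercise once the right facts are lined up, and no genuine obstacle should arise.

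For (1), I would start by noting that since $X$ is a $\mathsf{DC}$ space, Definition \ref{definition:2.9}(1) gives $\ir_c(X)=\mathcal{D}_c(X)$. Then Lemma \ref{DRWIsetrelation} furnishes the chain
\[
\mathcal{D}_c(X)\subseteq \mathsf{RD}(X)\subseteq \mathsf{WD}(X)\subseteq \ir_c(X),
\]
and both endpoints now agree, so every containment in the chain is an equality. In particular $\mathsf{WD}(X)=\mathcal{D}_c(X)$, and combining this with the inclusions $\mathcal{D}_c(X)\subseteq\mathbf{d}(X)\subseteq\mathsf{WD}(X)$ from Lemma \ref{SKIsetrelation}(3) forces $\mathbf{d}(X)=\mathcal{D}_c(X)$ as well. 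Hence $\ir_c(X)=\mathsf{WD}(X)=\mathsf{RD}(X)=\mathbf{d}(X)$.

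For (2), I would invoke (1) to get $\ir_c(X)=\mathbf{d}(X)$, which is precisely condition (1) of Corollary \ref{d reflection agrees with sobrification}; its equivalence to condition (2) of that corollary yields that $X^d=P_H(\mathbf{d}(X))$ is sober.

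For (3) and (4), the observations to exploit are: (a) $\ir_c(X)=\mathsf{WD}(X)$ says $X$ is a $\mathsf{WD}$ space, so by Corollary \ref{WF reflection agrees with sobrification} the pair $\langle X^s=P_H(\ir_c(X)),\eta_X\rangle$ is both a sobrification and a $\mathbf{WF}$-reflection of $X$; (b) $\ir_c(X)=\mathbf{d}(X)$ together with Corollary \ref{d reflection agrees with sobrification} tells us that $\langle X^s=P_H(\ir_c(X)),\eta_X\rangle$ is also a $d$-reflection of $X$. Since all three universal constructions are realized by the same pair, they coincide up to the canonical homeomorphism given by uniqueness of reflections, establishing (3) and (4) simultaneously. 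No step here is delicate — the only thing one has to be careful about is to point to the correct adequacy and homeomorphism-closure hypotheses (supplied by Remark \ref{four categories closed with respect to homeomorphisms} and Theorem \ref{four categories adequate}) that underwrite the cited corollaries.
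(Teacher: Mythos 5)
Your proposal is correct and follows exactly the route the paper intends: the paper gives no explicit proof but derives the corollary by citing Lemma \ref{DRWIsetrelation}, Lemma \ref{SKIsetrelation}, and Corollaries \ref{WF reflection agrees with sobrification}--\ref{d reflection agrees with sobrification}, which is precisely the squeeze-the-chain-of-inclusions argument you carry out in detail. Nothing is missing.
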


\section{Main results}

For a category $A$, we will use the symbol $B\subseteq_{full} A$ to denote that $B$ is a full subcategory of $A$. More generally, we give the following definition (cf. \cite[Section IV-3]{MacLane-1997})

\begin{definition} \label{reflection}
Let $A$ be a category and $B$ a subcategory $A$. For an object $a$ of $A$, a \emph{reflection of} $a$ \emph{in} $B$, $B$-\emph{reflection} of $a$ for short, is a pair $\langle b_a,\eta_a \rangle$ consisting of an object $b_a$ of $B$ and an arrow $\eta_a :a\rightarrow b_a$ satisfying that for any arrow $f: a\rightarrow b$ from $a$ to $b$ in $A$, there exists a unique arrow $f^{*}:b_a\rightarrow b$ in $B$ such that $f^{*}\circ \eta_a =f$, that is, the following diagram commutes.

\begin{equation*}
\centerline{
\xymatrix{ a \ar[dr]_{f} \ar[r]^-{\eta_a}&  b_a\ar@{.>}[d]^{f^{*}} & \\
  & b  & &
   }}
\end{equation*}
\end{definition}

\begin{definition}\label{X plus top 1} Let $X$ be a topological space for which $X$ is irreducible (i.e., $X\in\ir_c(X)$). Select a point $\top$ such that $\top\not\in X$. Then $(\mathcal C(X)\setminus \{X\})\bigcup \{X\cup\{\top\}\}$ (as the set of all closed sets) is a topology on $X\cup\{\top\}$. The resulting space is denoted by $X_{\top}^{\flat}$. Define a mapping $\eta_{X}^{\flat}: X\rightarrow  X_{\top}^{\flat}$ by  $\eta_{X}^{\flat}(x)=x$ for each $x\in X$. Clearly,  $\eta_{X}^{\flat}$ is a topological embedding.
\end{definition}

\begin{definition}\label{X plus top 2} Let $X$ be a $T_0$ space for which $X$ (with the specialization order) has a greatest element $\top_X$ and $X\setminus \{\top_X\}\in\ir_c(X)$. Select a point $\top$ such that $\top\not\in X$. Then $(\mathcal C(X)\setminus \{X\setminus \{\top_X\}\})\bigcup \{X\cup\{\top\}\}$ (as the set of all closed sets) is a topology on $X\cup\{\top\}$. The resulting space is denoted by $X_{\top}^{\natural}$. Define a mapping $\eta_{X}^{\natural}: X\rightarrow  X_{\top}^{\natural}$ by $\eta_{X}^{\natural}(x)=x$ for each $x\in X\setminus \{\top_X\}$ and $\eta_{X}^{\natural}(\top_X)=\top$. It is straightforward to verify that $\eta_{X}^{\natural}$ is a topological embedding.
\end{definition}

\begin{definition}\label{NK space} A $T_0$ space $X$ is called a $\mathbf{K}^{\neg}$-\emph{space} if it satisfies the following four conditions:
\begin{enumerate}[\rm (1)]
\item $X$ is not a $\mathbf{K}$-space.
\item $X$ (with the specialization order) has a greatest element $\top_X$.
 \item $X\setminus \{\top_X\}\in\ir_c(X)$.
  \item $\overline{\{x\}}\neq X\setminus \{\top_X\}$ for each $x\in X$, or equivalently, $X\setminus \{\top_X\}$ has no greatest element.
  \end{enumerate}
\end{definition}

Since $\overline{\{\top_X\}}=\da \top_X=X\neq X\setminus \{\top_X\}$, condition (4) in Definition \ref{NK space} is equivalent to the condition:  $\overline{\{x\}}\neq X\setminus \{\top_X\}$ for each $x\in X\setminus \{\top_X\}$.

\begin{remark}\label{X plus top flat} Let $X$ be a topological space and $X$ is not irreducible. Then there exist $A, B\in \mathcal C(X)\setminus \{X\}$ such that $X=A\cup B$, whence $(\mathcal C(X)\setminus \{X\})\bigcup \{X\cup\{\top\}\}$ is not a topology on $X\cup\{\top\}$.
\end{remark}

In order to give our main results, we need the following four results of \cite{SBWX-2021}.

\begin{lemma}\label{key lemma 1} (\cite[Lemma 6.3]{SBWX-2021})
Let $A$ be a category and $C\subseteq_{full} B\subseteq_{full} A$. For an object $a$ of $A$, if $\langle b_a, \eta_{a} : a \rightarrow b_a\rangle$ is a $B$-reflection of $a$ and $\langle c_a, \xi_a : a \rightarrow c_a\rangle $ is a $C$-reflection of $a$, then there is a unique arrow $\xi_{a}^{*} : b_a \rightarrow c_a$ in $B$ such that $\xi_{a}^{*}\circ \eta_a=\xi_a$, that is, the following diagram commutes.

\begin{equation*}
\centerline{
\xymatrix{ a \ar[dr]_{\xi_a} \ar[r]^-{\eta_a}&  b_a\ar@{.>}[d]^{\xi_a^{*}} & \\
  & c_a  & &
   }}
\end{equation*}
\noindent Moreover, $\langle c_a, \xi_{a}^{*} : b_a \rightarrow c_a\rangle$ is a $C$-reflection of $b_a$.
\end{lemma}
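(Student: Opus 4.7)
The plan is to use the universal properties of the two given reflections twice each: once to build the comparison arrow $\xi_a^\ast$ and once more, for each candidate $f:b_a\to c$, to produce the factorization through $c_a$ and verify it is unique.

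First, I would construct $\xi_a^\ast$. Since $C\subseteq_{full} B$, the object $c_a$ lies in $B$, so the arrow $\xi_a:a\to c_a$ is an arrow from $a$ to a $B$-object. By the universal property of the $B$-reflection $\langle b_a,\eta_a\rangle$ of $a$, there is a unique arrow $\xi_a^\ast:b_a\to c_a$ in $B$ with $\xi_a^\ast\circ\eta_a=\xi_a$. This yields the commutative triangle in the statement.

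Next, I would verify that $\langle c_a,\xi_a^\ast\rangle$ satisfies the universal property of a $C$-reflection of $b_a$. Let $c$ be any $C$-object and $f:b_a\to c$ any arrow in $A$. Consider the composite $f\circ\eta_a:a\to c$. Because $c$ is a $C$-object and $\langle c_a,\xi_a\rangle$ is the $C$-reflection of $a$, there exists a unique arrow $g:c_a\to c$ in $C$ with $g\circ\xi_a=f\circ\eta_a$. I claim $g$ is the required factorization of $f$ through $\xi_a^\ast$. To see $g\circ\xi_a^\ast=f$, note that both sides are arrows $b_a\to c$ in $B$ (using $c\in C\subseteq_{full} B$), and
\begin{equation*}
(g\circ\xi_a^\ast)\circ\eta_a=g\circ(\xi_a^\ast\circ\eta_a)=g\circ\xi_a=f\circ\eta_a.
\end{equation*}
The uniqueness clause of the $B$-reflection of $a$, applied to the arrow $f\circ\eta_a:a\to c$, then forces $g\circ\xi_a^\ast=f$.

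Finally, I would check uniqueness of such a $g$. If $h:c_a\to c$ in $C$ also satisfies $h\circ\xi_a^\ast=f$, then $h\circ\xi_a=h\circ\xi_a^\ast\circ\eta_a=f\circ\eta_a$, so by the uniqueness clause in the $C$-reflection of $a$ we obtain $h=g$. This finishes the verification. The argument is a standard two-step universal-property chase, and I expect no real obstacle beyond carefully tracking which category ($B$ or $C$) each arrow lives in and which reflection's uniqueness clause is being invoked at each step; in particular the inclusion $C\subseteq_{full} B$ is used crucially to ensure that $c_a$ and $c$ are also $B$-objects, so that the $B$-reflection's universal property applies to them.
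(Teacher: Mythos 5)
Your proof is correct: it is the standard two-step universal-property chase (use the $B$-reflection of $a$ to build $\xi_a^*$, then factor each $f:b_a\to c$ through $c_a$ via the $C$-reflection of $a$ and invoke the uniqueness clauses of both reflections), and you correctly track where the fullness hypotheses are needed so that $c_a$, $c$, and $f$ live in $B$. The paper itself does not prove this lemma — it is quoted from \cite[Lemma 6.3]{SBWX-2021} — so there is nothing to compare against beyond noting that your argument is exactly the expected one.
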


\begin{lemma}\label{key lemma 2} (\cite[Lemma 6.4]{SBWX-2021}
Suppose that the Sierpinski space $\Sigma~\!\!2$ is a $\mathbf{K}$-space and a $T_0$ space $X$ has a $\mathbf{K}$-reflection $\langle X^{k},\eta_{X} \rangle$. Then $\eta_{X}$ is a dense topological embedding.
\end{lemma}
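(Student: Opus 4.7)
The plan is to establish three separate properties of $\eta_X$: injectivity, initiality of the topology, and denseness of the image. The common engine for all three is the observation that, because $\Sigma~\!\!2$ is a $\mathbf{K}$-space, every continuous map from $X$ into $\Sigma~\!\!2$ must factor uniquely through $\eta_X$, and such maps are abundant: for each $U\in \mathcal O(X)$ the characteristic function $\chi_U:X\to\Sigma~\!\!2$ (sending $U$ to $1$ and $X\setminus U$ to $0$) is continuous. Denote by $\chi_U^{\ast}:X^k\to\Sigma~\!\!2$ the unique continuous extension with $\chi_U^{\ast}\circ\eta_X=\chi_U$.

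First I would handle injectivity. If $\eta_X(x)=\eta_X(y)$ for some $x\ne y$, the $T_0$ axiom supplies an open $U\subseteq X$ separating $x$ and $y$; applying $\chi_U^{\ast}$ to both sides of $\eta_X(x)=\eta_X(y)$ yields $\chi_U(x)=\chi_U(y)$, a contradiction. For the topological embedding property I would then observe that, since $\eta_X$ is continuous by definition of the reflection and injective by the previous step, it suffices to show that the topology on $X$ is initial with respect to $\eta_X$. Given $U\in\mathcal O(X)$, set $V=(\chi_U^{\ast})^{-1}(\{1\})\in\mathcal O(X^k)$; then $\eta_X^{-1}(V)=(\chi_U^{\ast}\circ\eta_X)^{-1}(\{1\})=\chi_U^{-1}(\{1\})=U$, and by injectivity $V\cap\eta_X(X)=\eta_X(U)$, so $\eta_X(U)$ is open in the subspace $\eta_X(X)$. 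Hence $\eta_X:X\to\eta_X(X)$ is a homeomorphism.

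Finally, for density, I would argue by contradiction. Suppose $\overline{\eta_X(X)}\ne X^k$ and pick a nonempty open $W\subseteq X^k$ with $W\cap\eta_X(X)=\emptyset$. Consider the two continuous maps $f,g:X^k\to\Sigma~\!\!2$ with $f\equiv 0$ and $g=\chi_W$ (continuous because $W$ is open). Both satisfy $f\circ\eta_X=g\circ\eta_X=0_X$ (the constant $0$ map on $X$), because $W$ misses $\eta_X(X)$. By the universal property of the $\mathbf{K}$-reflection applied to the continuous map $0_X:X\to\Sigma~\!\!2$, there is a \emph{unique} arrow $X^k\to\Sigma~\!\!2$ extending it along $\eta_X$, forcing $f=g$; but $g$ takes the value $1$ on $W\ne\emptyset$ while $f$ does not, a contradiction.

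I do not anticipate a serious obstacle, as all three arguments reduce to invoking the universal property against maps into $\Sigma~\!\!2$. The only small subtlety is the density step, where one has to remember to use the \emph{uniqueness} half of the reflection property (not merely existence) — without uniqueness, constructing two extensions of the same map gives no contradiction. The hypothesis that $\Sigma~\!\!2\in\mathbf{K}$ is used essentially in every step and cannot be dispensed with.
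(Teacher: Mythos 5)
Your proof is correct, and it is the standard argument for this lemma (which the paper itself only cites from \cite{SBWX-2021} without reproducing a proof): characteristic functions into $\Sigma~\!\!2$ give injectivity and initiality of the topology, and the uniqueness clause of the universal property, applied to the two extensions $0$ and $\chi_W$ of the constant zero map, gives density. All three steps check out, including the key point you flag that uniqueness (not mere existence) of the extension is what forces the contradiction in the density step.
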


\begin{theorem}(\cite[Lemma 6.7]{SBWX-2021})\label{key theorem 1}  Let $\mathbf{K}$ be a full subcategory of $\mathbf{Top}_0$ which is closed with respect to homeomorphisms and contains $\mathbf{Sob}$. For a $T_0$ space $X$, suppose that the following conditions are satisfied:
\begin{enumerate}[\rm (1)]
\item $X$ is not a $\mathbf{K}$-space.
\item $X\in \ir_c(X)$.
\item $\langle X_{\top}^{\flat},\eta_{X}^{\flat} \rangle$ is a sobrification of $X$.
\item The canonical sobrification $X^s$ (and hence any sobrification) of $X$ is not a $\mathbf{K}$-reflection of $X$.
\end{enumerate}
\noindent Then the $\mathbf{K}$-reflection of $X$ does not exist.
\end{theorem}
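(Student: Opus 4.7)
The plan is to argue by contradiction: assume $X$ has a $\mathbf{K}$-reflection $\langle X^{k}, \eta_X\rangle$, and then squeeze $X^k$ between $X$ and the sobrification $X^s = X_{\top}^{\flat}$, using the fact that $|X_{\top}^{\flat}\setminus X|=1$ to force $X^k$ to be homeomorphic either to $X$ itself or to $X^s$. Either alternative will contradict one of hypotheses (1) and (4).

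To make the squeeze precise, I would first invoke Lemma \ref{key lemma 1} with the chain $\mathbf{Sob}\subseteq_{full}\mathbf{K}\subseteq_{full}\mathbf{Top}_0$. Since $\langle X^s,\eta_X^{\flat}\rangle$ is a sobrification of $X$ by hypothesis (3), this produces a unique continuous $\xi^{*}:X^k\to X^s$ with $\xi^{*}\circ\eta_X=\eta_X^{\flat}$, and it also makes $\langle X^s,\xi^{*}\rangle$ a $\mathbf{Sob}$-reflection of $X^k$. Since the Sierpinski space $\Sigma~\!\!2$ is sober and hence a $\mathbf{K}$-space, Lemma \ref{key lemma 2} applies both to $\eta_X:X\to X^k$ and to $\xi^{*}:X^k\to X^s$, so each of them is a dense topological embedding. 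Identifying $X^k$ with $\xi^{*}(X^k)\subseteq X^s = X\cup\{\top\}$ and using $\xi^{*}(\eta_X(X))=\eta_X^{\flat}(X)=X$, I obtain $X\subseteq \xi^{*}(X^k)\subseteq X\cup\{\top\}$, which forces $\xi^{*}(X^k)$ to equal either $X$ or $X\cup\{\top\}$.

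I would then complete the proof by dichotomy. If $\xi^{*}(X^k)=X$, then $\xi^{*}$ is an embedding onto $X$; by Definition \ref{X plus top 1}, the subspace topology on $X\subseteq X_{\top}^{\flat}$ coincides with the original topology on $X$ (intersecting the closed sets $(\mathcal C(X)\setminus\{X\})\cup\{X\cup\{\top\}\}$ with $X$ recovers $\mathcal C(X)$), so $\xi^{*}$ is a homeomorphism $X^k\cong X$ and, because $\mathbf{K}$ is closed under homeomorphisms, $X$ itself must be a $\mathbf{K}$-space, contradicting (1). If instead $\xi^{*}(X^k)=X\cup\{\top\}$, then $\xi^{*}:X^k\to X^s$ is a bijective topological embedding, hence a homeomorphism, and the relation $\xi^{*}\circ\eta_X=\eta_X^{\flat}$ transports the universal property of $\langle X^k,\eta_X\rangle$ to $\langle X^s,\eta_X^{\flat}\rangle$, making $X^s$ a $\mathbf{K}$-reflection of $X$ and contradicting (4).

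The main delicate point will be the sandwiching step — verifying that the dense embedding $\xi^{*}$ leaves genuinely only two possibilities for its image — but this is forced by the elementary observation that $X_{\top}^{\flat}$ has exactly one more point than $X$ and that $\xi^{*}$ must fix $X$ through the commutativity $\xi^{*}\circ\eta_X=\eta_X^{\flat}$. The rest amounts to routine bookkeeping with Definitions \ref{X plus top 1} and \ref{reflection}; all the non-trivial content is packaged inside Lemmas \ref{key lemma 1} and \ref{key lemma 2}, which supply the map $\xi^{*}$ and the denseness that make the sandwich argument possible.
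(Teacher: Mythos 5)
Your argument is correct and is essentially the same as the one the paper uses for the parallel Theorem~\ref{key theorem 2} (the $\natural$-version), whose proof it reproduces almost verbatim in the $\flat$-setting: invoke Lemma~\ref{key lemma 1} to obtain $\xi^{*}$, Lemma~\ref{key lemma 2} for denseness of the embeddings, and then the two-case dichotomy on the image contradicting hypotheses (1) and (4) respectively. The sandwiching step and the verification that the subspace topology on $X\subseteq X_{\top}^{\flat}$ recovers the original topology are both handled correctly.
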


\begin{corollary} (\cite[Corollary 6.8]{SBWX-2021})\label{key corollary 1}
Let $\mathbf{K}$ be a full subcategory of $\mathbf{Top}_0$ which is closed with respect to homeomorphisms and contains $\mathbf{Sob}$. For a $T_0$ space $X$, suppose that the following conditions are satisfied:
\begin{enumerate}[\rm (a)]
\item $X$ is not a $\mathbf{K}$-space.
\item $\ir_c(X)=\{\overline{\{x\}} : x\in X\}\bigcup\{X\}$.
\item The canonical sobrification $X^s$ (and hence any sobrification) of $X$ is not a $\mathbf{K}$-reflection of $X$.
\end{enumerate}
\noindent Then by conditions (a) and (b), $\langle X_{\top}^{\flat},\eta_{X}^{\flat} \rangle$ is a sobrification of $X$, and hence the $\mathbf{K}$-reflection of $X$ does not exist.
\end{corollary}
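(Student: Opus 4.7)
The statement is essentially a repackaging of Theorem~\ref{key theorem 1}: conditions (1), (2) and (4) of that theorem come verbatim from (a), from the explicit $X\in\ir_c(X)$ contained in (b), and from (c), respectively. So the only real work is to verify condition (3), namely that $\langle X_\top^\flat,\eta_X^\flat\rangle$ is a sobrification of $X$, and this should follow from (a) and (b) alone. A useful preliminary remark is that (a) and (b) together force $\overline{\{x\}}\neq X$ for every $x\in X$: otherwise every irreducible closed set of $X$ would be a principal closure by (b), making $X$ sober and hence a $\mathbf{K}$-space by $\mathbf{Sob}\subseteq_{full}\mathbf{K}$, contradicting (a). This ensures $X_\top^\flat$ is $T_0$, since $\overline{\{\top\}}=X\cup\{\top\}$ is then distinct from every $\overline{\{x\}}^X\subsetneq X$.

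The heart of the argument is a direct computation of $\ir_c(X_\top^\flat)$ from the definition of the closed sets $(\mathcal{C}(X)\setminus\{X\})\cup\{X\cup\{\top\}\}$. For any $A\in\mathcal{C}(X)\setminus\{X\}$, the lattice of closed subsets of $A$ is identical in $X$ and in $X_\top^\flat$, so irreducibility transfers; by (b), the irreducible such $A$ are exactly the point closures $\overline{\{x\}}$. The remaining closed set $X\cup\{\top\}$ is itself irreducible, since any splitting $X\cup\{\top\}=B_1\cup B_2$ into proper closed subsets would leave $\top$ out of $B_1\cup B_2$, yielding a closed decomposition of $X$ into two proper closed subsets of $X$, contradicting $X\in\ir_c(X)$. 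Combined with the $T_0$ observation, this gives sobriety. To finish (3), I would exhibit the bijection $\psi\colon X_\top^\flat\to X^s=P_H(\ir_c(X))$ defined by $\psi(x)=\overline{\{x\}}$ for $x\in X$ and $\psi(\top)=X$; one checks that $\psi^{-1}(\Box_{\ir_c(X)}A)=A$ when $A\in\mathcal C(X)\setminus\{X\}$ and $\psi^{-1}(\Box_{\ir_c(X)}X)=X\cup\{\top\}$, making $\psi$ a homeomorphism with $\psi\circ\eta_X^\flat=\eta_X$. This identifies $\langle X_\top^\flat,\eta_X^\flat\rangle$ with the canonical sobrification of $X$.

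Once (3) is in hand, Theorem~\ref{key theorem 1} applies and immediately yields the non-existence of the $\mathbf{K}$-reflection of $X$. The only mildly delicate step I anticipate is the irreducibility argument for $X\cup\{\top\}$, which is the single place where the hypothesis $X\in\ir_c(X)$ is really deployed; everything else is routine bookkeeping comparing the closed-set lattices of $X$ and $X_\top^\flat$ and invoking the uniqueness (up to homeomorphism) of the sobrification.
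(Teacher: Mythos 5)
Your proposal is correct and matches the paper's approach: the paper cites this corollary from \cite{SBWX-2021} without reproving it, but its proof of the directly analogous Corollary~\ref{key corollary 2} proceeds exactly as you do, identifying the one-point extension with the canonical sobrification $P_H(\ir_c(X))$ via the bijection $x\mapsto\overline{\{x\}}$, $\top\mapsto X$, and then invoking the corresponding non-existence theorem (here Theorem~\ref{key theorem 1}). Two minor remarks: your direct computation of $\ir_c(X_{\top}^{\flat})$ is redundant once the homeomorphism with $X^s$ is established, and the hypothesis $X\in\ir_c(X)$ is really consumed earlier, in guaranteeing that $(\mathcal C(X)\setminus\{X\})\cup\{X\cup\{\top\}\}$ is closed under finite unions and hence a topology (Remark~\ref{X plus top flat}); the irreducibility of $X\cup\{\top\}$ itself is automatic, since it equals $\overline{\{\top\}}$.
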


By Remark \ref{four categories closed with respect to homeomorphisms}, Theorem \ref{K-adequate reflective}, Theorem \ref{four categories adequate},  Theorem \ref{key theorem 1} and Corollary \ref{key corollary 1}, we get the following corollaries.

\begin{corollary}\label{sobrification agree K-reflection 1}  Let $\mathbf{K}$ be a full and adequate subcategory of $\mathbf{Top}_0$ which is closed with respect to homeomorphisms and contains $\mathbf{Sob}$. For a $T_0$ space$X$, suppose that the following conditions are satisfied:
\begin{enumerate}[\rm (1)]
\item $X$ is not a $\mathbf{K}$-space.
\item $X\in \ir_c(X)$.
\item $\langle X_{\top}^{\flat},\eta_{X}^{\flat} \rangle$ is a sobrification of $X$.
\end{enumerate}
\noindent Then the sobrification of $X$ and the $\mathbf{K}$-reflection of $X$ coincide.
\end{corollary}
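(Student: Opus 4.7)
The plan is a direct application of Theorem \ref{key theorem 1} in its contrapositive form. The only new ingredient beyond Theorem \ref{key theorem 1} is that here we additionally assume $\mathbf{K}$ is \emph{adequate}: by Theorem \ref{K-adequate reflective}, adequacy forces $\mathbf{K}$ to be reflective in $\mathbf{Top}_0$, so in particular the $\mathbf{K}$-reflection $\langle X^k,\eta_X\rangle$ of $X$ is guaranteed to exist.

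Now hypotheses (1), (2) and (3) of the corollary match conditions (1), (2) and (3) of Theorem \ref{key theorem 1} verbatim. Suppose, toward a contradiction, that condition (4) of Theorem \ref{key theorem 1} also held, i.e. the canonical sobrification $X^s$ were not a $\mathbf{K}$-reflection of $X$. Then Theorem \ref{key theorem 1} would conclude that the $\mathbf{K}$-reflection of $X$ does not exist, which is incompatible with the existence of $\langle X^k,\eta_X\rangle$ just obtained from adequacy. Hence condition (4) must fail, meaning that the canonical sobrification $\langle X^s,\eta_X\rangle$ is itself a $\mathbf{K}$-reflection of $X$.

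By the standard uniqueness (up to homeomorphism over $X$) of reflections, the sobrification of $X$ and the $\mathbf{K}$-reflection of $X$ coincide, which is precisely the conclusion. There is no genuine obstacle here; the substance of the statement has already been packaged into Theorem \ref{key theorem 1} together with the fact that adequacy yields reflectivity (Theorem \ref{K-adequate reflective}). The corollary is essentially an \emph{existence plus contrapositive} combination of these two results, with hypothesis (3) providing the concrete description $\langle X_{\top}^{\flat},\eta_{X}^{\flat}\rangle$ of the sobrification needed to invoke Theorem \ref{key theorem 1}.
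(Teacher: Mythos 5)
Your proposal is correct and is essentially the paper's own argument: the paper derives this corollary precisely by combining Theorem \ref{K-adequate reflective} (adequacy implies the $\mathbf{K}$-reflection exists) with the contrapositive of Theorem \ref{key theorem 1} to force condition (4) of that theorem to fail. Nothing is missing.
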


\begin{corollary}\label{sobrification agree K-reflection 2}  Suppose that $\mathbf{K}=\mathbf{Top}_d$ or $\mathbf{K}=\mathbf{Top}_w$ or $\mathbf{K}$ is a Keimel-Lawson category. For a $T_0$ space $X$, assume that the following conditions are satisfied:
\begin{enumerate}[\rm (1)]
\item $X$ is not a $\mathbf{K}$-space.
\item $X\in \ir_c(X)$.
\item $\langle X_{\top}^{\flat},\eta_{X}^{\flat} \rangle$ is a sobrification of $X$.
\end{enumerate}
\noindent Then the sobrification of $X$ and the $\mathbf{K}$-reflection of $X$ coincide.
\end{corollary}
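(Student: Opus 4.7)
The plan is to reduce directly to Corollary \ref{sobrification agree K-reflection 1}, which applies to any full subcategory $\mathbf{K}$ of $\mathbf{Top}_0$ that is adequate, closed with respect to homeomorphisms, and contains $\mathbf{Sob}$. Under the hypotheses (1)--(3) of the present statement, that corollary immediately yields that the sobrification of $X$ and the $\mathbf{K}$-reflection of $X$ coincide. So the task reduces to verifying the three categorical hypotheses on $\mathbf{K}$ in each of the three cases specified.

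First I would note that $\mathbf{Sob}\subseteq_{full}\mathbf{Top}_w\subseteq_{full}\mathbf{Top}_d$ by the chain of implications listed below Corollary \ref{SDRWspacerelation}, and that Keimel--Lawson categories contain $\mathbf{Sob}$ by axiom $(\mathrm{K}_2)$. Closure under homeomorphisms for $\mathbf{Top}_d$ and $\mathbf{Top}_w$ is supplied by Remark \ref{four categories closed with respect to homeomorphisms}, while for Keimel--Lawson categories it is precisely axiom $(\mathrm{K}_1)$. Finally, adequacy in all three cases is part of Theorem \ref{four categories adequate}. With these verifications, Corollary \ref{sobrification agree K-reflection 1} applies verbatim and the desired conclusion follows.

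There is no genuine obstacle here beyond the material already developed; the substantive content sits inside Theorem \ref{key theorem 1} combined with the reflectivity consequence Theorem \ref{K-adequate reflective} of adequacy. Conceptually, the force of the argument is that adequacy forces the $\mathbf{K}$-reflection of $X$ to exist, while Theorem \ref{key theorem 1} says the only way that reflection could fail to coincide with the sobrification (granted conditions (2) and (3)) is for it not to exist at all. Since nonexistence is ruled out, the sobrification must itself serve as the $\mathbf{K}$-reflection, which is exactly the stated coincidence.
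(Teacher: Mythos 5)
Your proposal is correct and follows essentially the same route as the paper, which derives Corollary \ref{sobrification agree K-reflection 2} from Corollary \ref{sobrification agree K-reflection 1} by checking adequacy (Theorem \ref{four categories adequate}), closure under homeomorphisms (Remark \ref{four categories closed with respect to homeomorphisms} for $\mathbf{Top}_d$ and $\mathbf{Top}_w$, axiom $(\mathrm{K}_1)$ for Keimel--Lawson categories) and containment of $\mathbf{Sob}$; your description of the underlying mechanism (adequacy forces existence of the $\mathbf{K}$-reflection via Theorem \ref{K-adequate reflective}, so the nonexistence conclusion of Theorem \ref{key theorem 1} forces its hypothesis (4) to fail) matches the paper's intended argument. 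The only nitpick is that the implication chain sobriety $\Rightarrow$ well-filteredness $\Rightarrow$ $d$-space is stated in the preliminaries rather than below Corollary \ref{SDRWspacerelation}, but this does not affect correctness.
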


\begin{corollary} \label{sobrification agree K-reflection 3}
Let $\mathbf{K}$ be a full and adequate subcategory of $\mathbf{Top}_0$ which is closed with respect to homeomorphisms and contains $\mathbf{Sob}$. For a $T_0$ space$X$, suppose that the following two conditions are satisfied:
\begin{enumerate}[\rm (a)]
\item $X$ is not a $\mathbf{K}$-space.
\item $\ir_c(X)=\{\overline{\{x\}} : x\in X\}\bigcup\{X\}$.
\end{enumerate}
\noindent Then $\langle X_{\top}^{\flat},\eta_{X}^{\flat} \rangle$ is a sobrification of $X$, and the sobrification of $X$ and the $\mathbf{K}$-reflection of $X$ coincide.
\end{corollary}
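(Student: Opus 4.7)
The plan is to reduce everything to Corollary \ref{sobrification agree K-reflection 1}, whose three hypotheses are (i) $X$ is not a $\mathbf{K}$-space, (ii) $X\in\ir_c(X)$, and (iii) $\langle X_{\top}^{\flat},\eta_{X}^{\flat}\rangle$ is a sobrification of $X$. Hypothesis (i) is exactly (a), and (ii) is immediate from (b) since $X$ is explicitly listed in the right-hand side. So the only substantive task is to verify (iii), i.e., to produce the sobrification statement that also forms the first conclusion of the corollary.

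For (iii), I would invoke the first half of Corollary \ref{key corollary 1}, whose proof derives ``$\langle X_{\top}^{\flat},\eta_{X}^{\flat}\rangle$ is a sobrification of $X$'' from (a) and (b) alone (its condition (c) is only used for the non-existence conclusion and is irrelevant here). If one wants to verify this directly instead of citing it, the argument proceeds as follows. First, (a) and (b) together force $X$ to have no greatest element in its specialization order: if $\top_X$ were such an element, then $\overline{\{\top_X\}}=X$, so (b) would collapse to $\ir_c(X)=\{\overline{\{x\}}:x\in X\}$, making $X$ sober; but $\mathbf{Sob}\subseteq_{full}\mathbf{K}$ would then contradict (a). This guarantees that $X_{\top}^{\flat}$ is $T_0$ (the closures of $\top$ and of points of $X$ are pairwise distinct). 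Next, one checks that a proper closed set of $X_{\top}^{\flat}$ is irreducible in $X_{\top}^{\flat}$ iff it is irreducible in $X$, so by (b) every proper irreducible closed set of $X_{\top}^{\flat}$ equals $\overline{\{x\}}_X=\overline{\{x\}}_{X_{\top}^{\flat}}$ for some $x\in X$, while the whole space $X\cup\{\top\}$ is $\overline{\{\top\}}$. Thus every irreducible closed set of $X_{\top}^{\flat}$ is the closure of a unique point, so $X_{\top}^{\flat}$ is sober. Since $\eta_{X}^{\flat}$ is a topological embedding (by Definition \ref{X plus top 1}) and $X$ is dense in $X_{\top}^{\flat}$, the pair $\langle X_{\top}^{\flat},\eta_{X}^{\flat}\rangle$ is indeed a sobrification of $X$.

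With (i)--(iii) secured, Corollary \ref{sobrification agree K-reflection 1} applies directly and yields the claimed coincidence of the sobrification and $\mathbf{K}$-reflection of $X$. The main obstacle, if one works from scratch rather than quoting Corollary \ref{key corollary 1}, is precisely the observation that (a) together with (b) rules out a greatest element of $X$; without this step the construction $X_{\top}^{\flat}$ fails to be $T_0$ and the whole approach collapses. Given the infrastructure already in place, however, the entire proof is essentially a packaging exercise.
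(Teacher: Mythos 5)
Your proposal is correct and follows essentially the same route as the paper, which derives this corollary by combining Corollary \ref{key corollary 1} (whose conditions (a) and (b) alone yield that $\langle X_{\top}^{\flat},\eta_{X}^{\flat}\rangle$ is a sobrification) with adequacy of $\mathbf{K}$ and the non-existence criterion, exactly as you do via Corollary \ref{sobrification agree K-reflection 1}. Your direct verification of the sobrification claim (including the observation that (a) and (b) preclude a greatest element, which is what makes $X_{\top}^{\flat}$ a $T_0$, hence sober, space) is a sound unpacking of what the paper delegates to the cited result.
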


\begin{corollary} \label{sobrification agree K-reflection 4}
Suppose that $\mathbf{K}=\mathbf{Top}_d$ or $\mathbf{K}=\mathbf{Top}_w$ or $\mathbf{K}$ is a Keimel-Lawson category. For a $T_0$ space $X$, assume that the following two conditions are satisfied:
\begin{enumerate}[\rm (a)]
\item $X$ is not a $\mathbf{K}$-space.
\item $\ir_c(X)=\{\overline{\{x\}} : x\in X\}\bigcup\{X\}$.
\end{enumerate}
\noindent Then $\langle X_{\top}^{\flat},\eta_{X}^{\flat} \rangle$ is a sobrification of $X$, and the sobrification of $X$ and the $\mathbf{K}$-reflection of $X$ coincide.
\end{corollary}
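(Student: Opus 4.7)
The plan is to observe that this corollary is essentially a direct specialization of Corollary \ref{sobrification agree K-reflection 3} to three concrete families of categories, so the entire task reduces to checking that each option for $\mathbf{K}$ in the hypothesis satisfies the three blanket conditions demanded by Corollary \ref{sobrification agree K-reflection 3}: namely, $\mathbf{K}$ is a full subcategory of $\mathbf{Top}_0$ which (i) contains $\mathbf{Sob}$, (ii) is closed with respect to homeomorphisms, and (iii) is adequate.

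First I would dispatch the two named cases. If $\mathbf{K}=\mathbf{Top}_d$ or $\mathbf{K}=\mathbf{Top}_w$, then $\mathbf{Sob}\subseteq_{full}\mathbf{K}$ by the chain of implications (sobriety $\Rightarrow$ well-filteredness $\Rightarrow$ $d$-space) recalled in Section~2; closure under homeomorphisms is recorded in Remark \ref{four categories closed with respect to homeomorphisms}; and adequacy is part of Theorem \ref{four categories adequate}. Next I would deal with the third case: if $\mathbf{K}$ is a Keimel-Lawson category, then by definition $\mathbf{K}$ satisfies $(\mathrm{K}_1)$ through $(\mathrm{K}_4)$, so $(\mathrm{K}_2)$ gives $\mathbf{Sob}\subseteq_{full}\mathbf{K}$ and $(\mathrm{K}_1)$ gives closure under homeomorphisms, while adequacy of any Keimel-Lawson category is again supplied by Theorem \ref{four categories adequate}.

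With those preliminaries in hand, the hypothesis list of Corollary \ref{sobrification agree K-reflection 3} is met in each of the three cases, and its conditions (a) and (b) are exactly the conditions (a) and (b) assumed in the present statement. Therefore Corollary \ref{sobrification agree K-reflection 3} applies verbatim and delivers both desired conclusions simultaneously: that $\langle X_{\top}^{\flat},\eta_{X}^{\flat}\rangle$ is a sobrification of $X$, and that this sobrification coincides with the $\mathbf{K}$-reflection of $X$.

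I do not expect a genuine obstacle here, since no new topological construction is required beyond the machinery already assembled; the only mild care needed is to ensure that each of the three instances of $\mathbf{K}$ is verified to meet all three standing requirements of Corollary \ref{sobrification agree K-reflection 3}, rather than only the defining ones. In particular, one should not forget to cite Theorem \ref{four categories adequate} for adequacy in the $\mathbf{Top}_d$ and $\mathbf{Top}_w$ cases, and should not forget that a Keimel-Lawson category is by its very definition closed under homeomorphisms, so no separate appeal to Remark \ref{four categories closed with respect to homeomorphisms} is needed for that case.
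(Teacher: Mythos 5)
Your proposal is correct and follows essentially the same route as the paper: the paper derives this corollary (together with its companions) by combining Remark \ref{four categories closed with respect to homeomorphisms}, Theorem \ref{K-adequate reflective}, Theorem \ref{four categories adequate}, Theorem \ref{key theorem 1} and Corollary \ref{key corollary 1}, which amounts exactly to your specialization of Corollary \ref{sobrification agree K-reflection 3} after checking fullness, containment of $\mathbf{Sob}$, closure under homeomorphisms, and adequacy for each of the three choices of $\mathbf{K}$. Your verification of those standing hypotheses (including $\mathbf{Sob}\subseteq\mathbf{Top}_w\subseteq\mathbf{Top}_d$ and $(\mathrm{K}_1)$, $(\mathrm{K}_2)$ for Keimel-Lawson categories) is accurate and complete.
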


Similarly, we have the following conclusions.

\begin{theorem}\label{key theorem 2}  Let $\mathbf{K}$ be a full subcategory of $\mathbf{Top}_0$ which is closed with respect to homeomorphisms and contains $\mathbf{Sob}$. For a $\mathbf{K}^{\neg}$-space $X$, suppose that the following two conditions are satisfied:
\begin{enumerate}[\rm (1)]
\item $\langle X_{\top}^{\natural},\eta_{X}^{\natural} \rangle$ is a sobrification of $X$.
\item The canonical sobrification $X^s$ (and hence any sobrification) of $X$ is not a $\mathbf{K}$-reflection of $X$.
\end{enumerate}
\noindent Then the $\mathbf{K}$-reflection of $X$ does not exist.
\end{theorem}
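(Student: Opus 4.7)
My plan is to argue by contradiction, and to show that any putative $\mathbf{K}$-reflection of $X$ must sit as a subspace of $X_{\top}^{\natural}$ strictly between $X$ and its sobrification, and then to eliminate the two possibilities in turn. Suppose $\langle X^k,\eta_X\rangle$ is a $\mathbf{K}$-reflection of $X$. Since $\mathbf{Sob}\subseteq_{full}\mathbf{K}\subseteq_{full}\mathbf{Top}_{0}$ and $\langle X_{\top}^{\natural},\eta_{X}^{\natural}\rangle$ is a sobrification of $X$ by hypothesis~(1), Lemma \ref{key lemma 1} (with $C=\mathbf{Sob}$, $B=\mathbf{K}$, $A=\mathbf{Top}_{0}$) yields a unique continuous map $(\eta_{X}^{\natural})^{*}:X^k\to X_{\top}^{\natural}$ with $(\eta_{X}^{\natural})^{*}\circ\eta_X=\eta_{X}^{\natural}$, and moreover $\langle X_{\top}^{\natural},(\eta_{X}^{\natural})^{*}\rangle$ is a sobrification of $X^k$.

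Because the Sierpinski space is sober and $\mathbf{Sob}\subseteq\mathbf{K}$, Lemma \ref{key lemma 2} applies to both $\mathbf{K}$ and $\mathbf{Sob}$, so $\eta_X:X\to X^k$ and $(\eta_{X}^{\natural})^{*}:X^k\to X_{\top}^{\natural}$ are dense topological embeddings. Identifying $X^k$ with its image in $X_{\top}^{\natural}$ under $(\eta_{X}^{\natural})^{*}$, we get the chain $\eta_{X}^{\natural}(X)\subseteq X^k\subseteq X_{\top}^{\natural}$. Since $\eta_{X}^{\natural}(X)=(X\setminus\{\top_X\})\cup\{\top\}$ and $X_{\top}^{\natural}\setminus\eta_{X}^{\natural}(X)=\{\top_X\}$, there are only two possibilities: $X^k=X_{\top}^{\natural}$, or $X^k=(X\setminus\{\top_X\})\cup\{\top\}$ as a subspace of $X_{\top}^{\natural}$.

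In the first case, $(\eta_{X}^{\natural})^{*}$ is a bijective dense topological embedding, hence a homeomorphism, and the equality $(\eta_{X}^{\natural})^{*}\circ\eta_X=\eta_{X}^{\natural}$ shows that $\langle X^k,\eta_X\rangle$ is (equivalent to) the sobrification $\langle X_{\top}^{\natural},\eta_{X}^{\natural}\rangle$. Thus a sobrification of $X$ would be a $\mathbf{K}$-reflection of $X$, contradicting hypothesis~(2).

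The second case is where I expect the only real content of the argument to lie. Put $X'=(X\setminus\{\top_X\})\cup\{\top\}$. I would first compute the subspace topology inherited by $X'$ from $X_{\top}^{\natural}$, using the crucial observation that, because $\overline{\{\top_X\}}=X$ in $X$, no proper closed subset of $X$ contains $\top_X$; a routine intersection calculation then gives $\mathcal{C}(X')=(\mathcal{C}(X)\setminus\{X\})\cup\{X'\}$. Next I would exhibit the bijection $\psi:X\to X'$ with $\psi(x)=x$ for $x\neq\top_X$ and $\psi(\top_X)=\top$: it sends $X$ to $X'$ and fixes every other closed set of $X$, establishing a bijection $\mathcal{C}(X)\to\mathcal{C}(X')$, hence a homeomorphism $X\cong X'\cong X^k$. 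Since $X^k$ is a $\mathbf{K}$-space and $\mathbf{K}$ is closed under homeomorphisms, $X$ would be a $\mathbf{K}$-space, contradicting condition~(1) of Definition~\ref{NK space}. Either case yields a contradiction, so the $\mathbf{K}$-reflection of $X$ does not exist.
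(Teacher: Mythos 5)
Your argument is correct and follows essentially the same route as the paper's proof: invoke Lemma \ref{key lemma 1} to factor $\eta_{X}^{\natural}$ through $X^{k}$, use Lemma \ref{key lemma 2} to see all the maps are dense embeddings, and split into the two cases according to whether the image of $X^{k}$ in $X_{\top}^{\natural}$ is $(X\setminus\{\top_X\})\cup\{\top\}$ (giving $X\cong X^{k}$, contradicting that $X$ is not a $\mathbf{K}$-space) or all of $X_{\top}^{\natural}$ (contradicting hypothesis (2)). Your explicit computation of the subspace topology on $(X\setminus\{\top_X\})\cup\{\top\}$ just re-verifies that $\eta_{X}^{\natural}$ is a topological embedding, which the paper records in Definition \ref{X plus top 2}.
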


\begin{proof} First, we show that $(\mathcal C(X)\setminus \{X\setminus \{\top_X\}\})\bigcup \{X\cup\{\top\}\}$ (as the set of all closed sets) is a $T_0$ topology on $X\cup\{\top\}$. Suppose that $x, y\in X\cup\{\top\}$ with $x\neq y$.

 {Case 1.} $x\in X$ and $y=\top$.

 Then $\cl_{X_{\top}^{\natural}}\{y\}=X\cup \{\top\}\neq \cl_{X_{\top}^{\natural}}\{x\}$ since $\cl_{X_{\top}^{\natural}}\{x\}\subseteq X\in \mathcal C(X_{\top}^{\natural})$.

 {Case 2.} $x\in X\setminus \{\top_X\}$ and $y=\top_X$.

 As $X$ is a $\mathbf{K}^{\neg}$-space, $\cl_{X_{\top}^{\natural}}\{y\}=X$ and $\cl_{X}\{x\}\neq X\setminus \{\top_X\}$, whence
$\cl_X\{x\}\in \mathcal C(X)\setminus \{X\setminus \{\top_X\}\}$. Therefore, $\cl_{X_{\top}^{\natural}}\{x\}=\cl_X\{x\}\neq X= \cl_{X_{\top}^{\natural}}\{y\}$.

 {Case 3.} $x, y\in X\setminus \{\top_X\}$.

 Since $X$ is a $\mathbf{K}^{\neg}$-space, $\cl_{X_{\top}^{\natural}}\{x\}=\cl_X\{x\}\neq \cl_X\{y\}=\cl_{X_{\top}^{\natural}}\{y\}$.

 Thus $X_{\top}^{\natural}$ is a $T_0$ space.

Now suppose, on the contrary, that the $\mathbf{K}$-reflection $\langle X^{k},\eta_{X}\rangle$ of $X$ exist. Then by condition (1) (note that every sober space is a $\mathbf{K}$-space and hence $\Sigma~\!\!2$ is a $\mathbf{K}$-space since $\mathbf{Sob}\subseteq_{full} \mathbf{K}$), there is a unique continuous mapping $(\eta_{X}^{\natural})^{k} : X^k \rightarrow X_\top^{\natural}$ such that $(\eta_{X}^{\natural})^{k}\circ \eta_{X}=\eta_{X}^{\natural}$, that is, the the following diagram commutes.

\begin{equation*}
\centerline{
\xymatrix{ X \ar[dr]_{\eta_X^\natural} \ar[r]^-{\eta_{X}}&  {X^{k}} \ar@{.>}[d]^{(\eta_{X}^{\natural})^{k}} & \\
  & X_{\top}^{\natural}  & &
   }}
\end{equation*}

\noindent Since $\mathbf{Sob}\subseteq_{full} \mathbf{K}\subseteq_{full} \mathbf{Top}_0$, it follows from Lemma \ref{key lemma 1} that $\langle X_{\top}^{\natural},(\eta_{X}^{\natural})^{k} \rangle$ is a sobrification of $X^{k}$. By Lemma \ref{key lemma 2}, $\eta_{X}$, $\eta_{X}^{\natural}$ and $(\eta_{X}^{\natural})^{k}$ are all dense topological embeddings. As $(\eta_{X}^{\natural})^{k}(\eta_{X}(X))=\eta_{X}^{\natural}(X)=(X\setminus \{\top_X\})\cup \{\top\}$, we have that $(\eta_{X}^{\natural})^{k}(X^{k})=(X\setminus \{\top_X\})\cup \{\top\}$ or $(\eta_{X}^{\natural})^{k}(X^{k})=X\cup\{\top\}$.

{Case A.} $(\eta_{X}^{\natural})^{k}(X^{k})=(X\setminus \{\top_X\})\cup \{\top\}$.

Then $X^{k}$ is homeomorphic to the subspace $(X\setminus \{\top_X\})\cup \{\top\}$ of $X_{\top}^{\natural}$ since $(\eta_{X}^{\natural})^{k}$ is a topological embedding. Clearly, $X$ is homeomorphic to the subspace $(X\setminus \{\top_X\})\cup \{\top\}$ via the homeomorphism $\varphi : X \rightarrow (X\setminus \{\top_X\})\cup \{\top\}$ defined by $\varphi(x)=x$ for each $x\in X\setminus \{\top_X\}$ and $\varphi(\top_X)=\top$. Whence $X$ is homeomorphic to $X^k$. As $\mathbf{K}$ is closed with respect to homeomorphisms, $X$ is a $\mathbf{K}$-space, which contradicts the hypothesis that $X$ is a $\mathbf{K}^{\neg}$-space.

{Case B.} $(\eta_{X}^{\natural})^{k}(X^{k})=X\cup\{\top\}$.

Then $(\eta_{X}^{\natural})^{k} : X^{k} \rightarrow X_{\top}^\natural$ is a homeomorphism. So $\langle X_{\top}^{\natural},\eta_{X}^{\natural}=(\eta_{X}^{\natural})^{k}\circ \eta_{X} \rangle$, as a sobrification of $X$ by condition (1), is also a $\mathbf{K}$-reflection of $X$, contradicting condition (2).

Therefore, the $\mathbf{K}$-reflection of $X$ does not exist.
\end{proof}

\begin{corollary}\label{key corollary 2}
Let $\mathbf{K}$ be a full subcategory of $\mathbf{Top}_0$ which is closed with respect to homeomorphisms and contains $\mathbf{Sob}$. For a $\mathbf{K}^{\neg}$-space $X$, suppose that the following two conditions are satisfied:
\begin{enumerate}[\rm (a)]
\item $\ir_c(X)=\{\overline{\{x\}} : x\in X\}\bigcup\{X\setminus \{\top_X\}\}$.
\item The sobrification $X^s$ (and hence any sobrification) of $X$ is not a $\mathbf{K}$-reflection of $X$.
\end{enumerate}
\noindent Then by condition (a), $\langle X_{\top}^{\natural},\eta_{X}^{\natural} \rangle$ is a sobrification of $X$, and hence the $\mathbf{K}$-reflection of $X$ does not exist.
\end{corollary}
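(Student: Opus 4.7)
The plan is to deduce this from Theorem \ref{key theorem 2}: once I verify that condition (a) forces $\langle X_{\top}^{\natural}, \eta_{X}^{\natural}\rangle$ to be a sobrification of $X$, the corollary follows by feeding this together with hypothesis (b) into that theorem. As a preliminary, the same $T_0$ check performed in the opening paragraph of the proof of Theorem \ref{key theorem 2} shows that $X_{\top}^{\natural}$ is a well-defined $T_0$ space.

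To establish the sobrification claim, I would exploit the uniqueness of sobrifications by exhibiting an explicit homeomorphism $\varphi\colon X^{s}\to X_{\top}^{\natural}$ over $X$, i.e., satisfying $\varphi\circ\eta_{X}=\eta_{X}^{\natural}$. Condition (a) pins down
\[
\ir_{c}(X)=\{\overline{\{x\}}:x\in X\}\cup\{X\setminus\{\top_{X}\}\},
\]
and by the $\mathbf{K}^{\neg}$-space requirements, $X\setminus\{\top_{X}\}$ differs from every point closure. This makes the assignment
\[
\varphi\bigl(\overline{\{x\}}\bigr)=x\;\text{for } x\in X\setminus\{\top_{X}\},\qquad \varphi(X)=\top,\qquad \varphi\bigl(X\setminus\{\top_{X}\}\bigr)=\top_{X}
\]
a well-defined bijection, and the identity $\varphi\circ\eta_{X}=\eta_{X}^{\natural}$ is then built in.

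Next I would verify that $\varphi$ is a homeomorphism. The specialization order on $X^{s}=P_{H}(\ir_{c}(X))$ is set inclusion, while that on $X_{\top}^{\natural}$ is read off its closed sets; matching the two uses only that $\top_{X}$ is the greatest element of $X$, so that $\overline{\{x\}}\subseteq X\setminus\{\top_{X}\}$ exactly for $x\neq\top_{X}$. For the topology, every basic open of $X^{s}$ has the form $\Diamond U=\{A\in\ir_{c}(X):A\cap U\neq\emptyset\}$ with $U\in\mathcal{O}(X)$. A case split on $U=\emptyset$, $U=\{\top_{X}\}$, and $U\notin\{\emptyset,\{\top_{X}\}\}$ (exploiting the fact that every nonempty open of $X$ contains the greatest element $\top_{X}$) shows that $\varphi(\Diamond U)$ equals $\emptyset$, $\{\top\}$, or $U\cup\{\top\}$ respectively; each of these is open in $X_{\top}^{\natural}$, and conversely every open set of $X_{\top}^{\natural}$ is obtained this way.

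The main subtlety is keeping the two ``top'' elements distinct: $\top_{X}$ continues to live inside $X_{\top}^{\natural}$ and ends up being the image of the ``extra'' irreducible set $X\setminus\{\top_{X}\}$ under $\varphi$, while the adjoined point $\top$ corresponds under $\varphi^{-1}$ to the point closure $\overline{\{\top_{X}\}}=X$. Once the homeomorphism is in hand, uniqueness of sobrifications gives that $\langle X_{\top}^{\natural},\eta_{X}^{\natural}\rangle$ is a sobrification of $X$, and applying Theorem \ref{key theorem 2} with hypothesis (b) then yields the non-existence of the $\mathbf{K}$-reflection of $X$.
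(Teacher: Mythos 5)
Your proposal is correct and follows essentially the same route as the paper: it exhibits the same explicit bijection from the canonical sobrification $X^{s}=P_{H}(\ir_{c}(X))$ onto $X_{\top}^{\natural}$ (sending $\overline{\{x\}}\mapsto x$ for $x\neq\top_{X}$, $X\setminus\{\top_{X}\}\mapsto\top_{X}$, and $X=\overline{\{\top_{X}\}}\mapsto\top$), checks it is a homeomorphism compatible with the embeddings, and then invokes Theorem \ref{key theorem 2}. The only cosmetic difference is that you verify the homeomorphism on the subbasic open sets $\Diamond U$ (with the necessary special case $U=\{\top_{X}\}$, which is indeed open since $X\setminus\{\top_{X}\}\in\ir_{c}(X)$) whereas the paper works with the closed sets $\Box C$.
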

\begin{proof} Since $X$ is not a $\mathbf{K}$-space, $X$ is not sober, whence by condition (a), $X\setminus \{\top_X\}\neq\overline{\{x\}}$ for each $x\in X$. It is well-known that the space $X^s=P_H(\ir_c(X))$ with the canonical mapping $\eta_{X}: X\longrightarrow X^s$, $\eta_{X}(x)=x$, is a sobrification of $X$ (see, for example, \cite[Exercise V-4.9]{redbook}). For $C\in \mathcal C(X)$, by condition (a) we have

$$\Box_{\ir_c(X)} C=\{A\in \ir_c(X) : A\subseteq  C\}=
\begin{cases}
\{\overline{\{c\}} : c\in C\},& C\neq X\setminus \{\top_X\}, C\neq X,\\
\{\overline{\{c\}} : c\in X\setminus \{\top_X\}\}\bigcup \{X\setminus \{\top_X\}\},& C=X\setminus \{\top_X\},\\
\{\overline{\{x\}} : x\in X\}\bigcup\{X\setminus \{\top_X\}\},& C=X.
	\end{cases}$$

\noindent Define a mapping $h : X^s\rightarrow X_{\top}^{\natural}$ by
$$h(A)=
\begin{cases}
	x & A=\overline{\{x\}}, x\in X\setminus \{\top_X\},\\
\top_X & A=X\setminus \{\top_X\},\\
\top & A=\overline{\{\top_X\}}=X.
	\end{cases}$$
\noindent Clearly, $h$ is a bijection. For each $B\in \mathcal C(X)$ and $C\in (\mathcal C(X)\setminus \{X\setminus \{\top_X\}\})\bigcup \{X\cup\{\top\}\}$, we have

$$h(\Box_{\ir_c(X)} B)=
\begin{cases}
	B& B\neq X\setminus \{\top_X\},B\neq X,\\
X& B=X\setminus\{\top_X\},\\
X\cup\{\top\}& B=X.
	\end{cases}$$
and
$$h^{-1}(C)=
\begin{cases}
	\Box_{\ir_c(X)}C& C\neq X, C\neq X\cup\{\top\},\\
\Box_{\ir_c(X)} (X\setminus \{\top_X\})& C=X,\\
\Box_{\ir_c(X)} X=\ir_c(X)& C=X\cup\{\top\}.
	\end{cases}$$

\noindent Thus $h$ is a homeomorphism, and hence $\langle X_{\top}^{\natural},\eta_{X}^{\natural}=h\circ \eta_X \rangle$ is a sobrification of $X$.

Therefore, conditions (1) and (2) of Theorem \ref{key theorem 2} hold for $X$. By Theorem \ref{key theorem 2}, the $\mathbf{K}$-reflection of $X$ does not exist.
\end{proof}

By Remark \ref{four categories closed with respect to homeomorphisms}, Theorem \ref{K-adequate reflective}, Theorem \ref{four categories adequate},  Theorem \ref{key theorem 2} and Corollary \ref{key corollary 2}, we have the following corollaries.

\begin{corollary}\label{sobrification agree K-reflection 11}  Let $\mathbf{K}$ be a full and adequate subcategory of $\mathbf{Top}_0$ which is closed with respect to homeomorphisms and contains $\mathbf{Sob}$. For a $\mathbf{K}^{\neg}$-space $X$, if $\langle X_{\top}^{\natural},\eta_{X}^{\natural} \rangle$ is a sobrification of $X$, then the sobrification of $X$ and the $\mathbf{K}$-reflection of $X$ coincide.
\end{corollary}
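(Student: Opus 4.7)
The plan is to deduce the statement from Theorem \ref{key theorem 2} by contrapositive. The key observation is that since $\mathbf{K}$ is adequate, closed under homeomorphisms, and contains $\mathbf{Sob}$, Theorem \ref{K-adequate reflective} guarantees that the $\mathbf{K}$-reflection $\langle X^{k}, \eta_{X}\rangle$ of $X$ exists; in particular, $X^{k}=P_H(\mathbf{K}(X))$. So both the sobrification and the $\mathbf{K}$-reflection of $X$ are available, and the whole question reduces to showing that the given sobrification $\langle X_{\top}^{\natural},\eta_{X}^{\natural}\rangle$ actually satisfies the universal property of a $\mathbf{K}$-reflection.

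To do this, first note that $X_{\top}^{\natural}$, being sober, is automatically a $\mathbf{K}$-space (since $\mathbf{Sob}\subseteq_{full}\mathbf{K}$). Thus the question is only whether $\eta_{X}^{\natural}$ has the required factorization property for continuous maps from $X$ into arbitrary $\mathbf{K}$-spaces. Now apply Theorem \ref{key theorem 2}: $X$ is a $\mathbf{K}^{\neg}$-space, condition (1) of that theorem is our hypothesis, and so if condition (2) were also satisfied, namely that no sobrification of $X$ is a $\mathbf{K}$-reflection of $X$, the theorem would force the $\mathbf{K}$-reflection of $X$ not to exist. This contradicts the existence established in the first paragraph. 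Hence condition (2) must fail, that is, the sobrification $\langle X_{\top}^{\natural},\eta_{X}^{\natural}\rangle$ of $X$ \emph{is} a $\mathbf{K}$-reflection of $X$.

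Finally, since reflections are unique up to a canonical homeomorphism, the $\mathbf{K}$-reflection $\langle X^{k},\eta_{X}\rangle$ and the sobrification $\langle X_{\top}^{\natural},\eta_{X}^{\natural}\rangle$ must agree, giving the desired coincidence. Strictly speaking, I would make the uniqueness explicit by using the universal property twice: the identity maps factor through one another to produce mutually inverse continuous bijections $X^{k}\to X_{\top}^{\natural}$ and $X_{\top}^{\natural}\to X^{k}$ compatible with $\eta_{X}$ and $\eta_{X}^{\natural}$. There is really no serious obstacle in this argument; it is essentially a one-line contrapositive of Theorem \ref{key theorem 2}, together with the bookkeeping that adequacy provides the existence half that the hypothesis of Theorem \ref{key theorem 2} does not assume. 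The only mild care needed is to ensure that the sobrification appearing in both statements is understood as a pair (space plus structure map), so that ``coincide'' means isomorphic as objects under $X$, not merely homeomorphic as topological spaces.
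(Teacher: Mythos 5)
Your proof is correct and follows essentially the same route the paper intends: adequacy (via Theorem \ref{K-adequate reflective}) guarantees the $\mathbf{K}$-reflection exists, and the contrapositive of Theorem \ref{key theorem 2} then forces condition (2) of that theorem to fail, so the given sobrification is already a $\mathbf{K}$-reflection. The paper derives this corollary by exactly this combination of Theorem \ref{K-adequate reflective}, Theorem \ref{four categories adequate} and Theorem \ref{key theorem 2}, so there is nothing to add beyond your (correctly noted) bookkeeping about uniqueness of reflections as pairs.
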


\begin{corollary}\label{sobrification agree K-reflection 12}  Suppose that $\mathbf{K}=\mathbf{Top}_d$ or $\mathbf{K}=\mathbf{Top}_w$ or $\mathbf{K}$ is a Keimel-Lawson category. For a $\mathbf{K}^{\neg}$-space $X$, if $\langle X_{\top}^{\natural},\eta_{X}^{\natural} \rangle$ is a sobrification of $X$, then the sobrification of $X$ and the $\mathbf{K}$-reflection of $X$ coincide.
\end{corollary}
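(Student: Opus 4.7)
The plan is to reduce this corollary directly to the already-established Corollary \ref{sobrification agree K-reflection 11}, whose conclusion is exactly the desired one once we have verified, for each of the three allowed instantiations of $\mathbf{K}$, the four standing hypotheses on $\mathbf{K}$: (i) $\mathbf{K}$ is a full subcategory of $\mathbf{Top}_0$, (ii) $\mathbf{Sob}\subseteq_{full}\mathbf{K}$, (iii) $\mathbf{K}$ is closed with respect to homeomorphisms, and (iv) $\mathbf{K}$ is adequate. So the whole task is a bookkeeping check that each of $\mathbf{Top}_d$, $\mathbf{Top}_w$, and any Keimel-Lawson $\mathbf{K}$ satisfies (i)--(iv).

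For $\mathbf{Top}_d$ and $\mathbf{Top}_w$: fullness in $\mathbf{Top}_0$ is part of their defining description in the Preliminaries, the implications ``sobriety $\Rightarrow$ well-filteredness $\Rightarrow$ $d$-space'' recalled there give $\mathbf{Sob}\subseteq_{full}\mathbf{Top}_w\subseteq_{full}\mathbf{Top}_d$, and closure under homeomorphisms is the content of Remark \ref{four categories closed with respect to homeomorphisms}. For a Keimel-Lawson category, (i)--(iii) are precisely axioms $(\mathrm{K}_1)$ and $(\mathrm{K}_2)$ together with the convention that $\mathbf{K}\subseteq_{full}\mathbf{Top}_0$, which is the standing assumption of Section~3. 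Adequacy in all three cases is exactly what Theorem \ref{four categories adequate} provides.

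With (i)--(iv) verified, I would then invoke Corollary \ref{sobrification agree K-reflection 11} with this $\mathbf{K}$ and the given $\mathbf{K}^{\neg}$-space $X$: since $\langle X_{\top}^{\natural},\eta_{X}^{\natural}\rangle$ is by assumption a sobrification of $X$, the corollary yields at once that the sobrification of $X$ and the $\mathbf{K}$-reflection of $X$ coincide, which is the stated conclusion.

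There is no substantive obstacle in this argument; the proof is purely an application of the general framework of Section~3 combined with Corollary \ref{sobrification agree K-reflection 11}. The only point worth being mildly careful about is not conflating the hypothesis ``$\langle X_{\top}^{\natural},\eta_{X}^{\natural}\rangle$ is a sobrification of $X$'' with the separate claim (not assumed here) that condition (a) of Corollary \ref{key corollary 2} holds; the statement being proved takes this sobrification property as a hypothesis, so no derivation of it from $\ir_c(X)=\{\overline{\{x\}}:x\in X\}\cup\{X\setminus\{\top_X\}\}$ is needed in this particular corollary.
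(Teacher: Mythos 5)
Your proposal is correct and follows essentially the same route as the paper: the corollary is obtained by specializing Corollary \ref{sobrification agree K-reflection 11}, with fullness, $\mathbf{Sob}\subseteq\mathbf{K}$, closure under homeomorphisms (Remark \ref{four categories closed with respect to homeomorphisms} resp.\ axiom $(\mathrm{K}_1)$) and adequacy (Theorem \ref{four categories adequate}) checked for each of the three choices of $\mathbf{K}$. Your closing caution about not conflating the sobrification hypothesis with condition (a) of Corollary \ref{key corollary 2} is also consistent with how the paper separates these statements.
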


\begin{corollary} \label{sobrification agree K-reflection 13}
Let $\mathbf{K}$ be a full and adequate subcategory of $\mathbf{Top}_0$ which is closed with respect to homeomorphisms and contains $\mathbf{Sob}$. For a $\mathbf{K}^{\neg}$-space $X$, if $\ir_c(X)=\{\overline{\{x\}} : x\in X\}\bigcup\{X\setminus \{\top_X\}\}$, then $\langle X_{\top}^{\natural},\eta_{X}^{\natural} \rangle$ is a sobrification of $X$, and the sobrification of $X$ and the $\mathbf{K}$-reflection of $X$ coincide.
\end{corollary}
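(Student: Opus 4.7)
The plan is to derive the corollary as a direct consequence of the adequacy of $\mathbf{K}$ together with the contrapositive of Theorem \ref{key theorem 2}, using the construction from the proof of Corollary \ref{key corollary 2} to identify the sobrification of $X$ with $X_{\top}^{\natural}$. So the argument splits naturally into two halves: first I recognize $\langle X_{\top}^{\natural},\eta_X^{\natural}\rangle$ as a sobrification, and second I force the $\mathbf{K}$-reflection (which exists by adequacy) to agree with it.

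For the first half, I would exhibit an explicit homeomorphism $h\colon X^s = P_H(\ir_c(X)) \to X_{\top}^{\natural}$. Since $X$ is a $\mathbf{K}^{\neg}$-space, the closures $\overline{\{x\}}$ are pairwise distinct, $\overline{\{\top_X\}}=X$, and $\overline{\{x\}}\neq X\setminus\{\top_X\}$ for every $x\in X$; combined with the hypothesis $\ir_c(X)=\{\overline{\{x\}}:x\in X\}\cup\{X\setminus\{\top_X\}\}$, this gives a well-defined bijection $h(\overline{\{x\}})=x$ for $x\in X\setminus\{\top_X\}$, $h(X\setminus\{\top_X\})=\top_X$, and $h(X)=\top$. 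To see $h$ is a homeomorphism I would compute, case by case (according to whether $C\in\{X,X\setminus\{\top_X\}\}$ or not), the image $h(\Box_{\ir_c(X)}C)$ of each closed set of $X^s$ and the preimage $h^{-1}(C')$ of each closed set of $X_{\top}^{\natural}$, exactly as in the proof of Corollary \ref{key corollary 2}. Composing with the canonical sobrification map then shows $\langle X_{\top}^{\natural},\eta_X^{\natural}\rangle$ is a sobrification of $X$.

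For the second half, I would invoke Theorem \ref{K-adequate reflective}: since $\mathbf{K}$ is adequate, the $\mathbf{K}$-reflection of $X$ exists. Now suppose for contradiction that the sobrification of $X$ were not a $\mathbf{K}$-reflection of $X$. Then condition (1) of Theorem \ref{key theorem 2} is satisfied (just shown) and condition (2) of that theorem holds as well (since $X^s$ and $\langle X_{\top}^{\natural},\eta_X^{\natural}\rangle$ represent the same sobrification up to homeomorphism). Theorem \ref{key theorem 2} would then assert that the $\mathbf{K}$-reflection of $X$ does not exist, contradicting existence from adequacy. Hence $\langle X_{\top}^{\natural},\eta_X^{\natural}\rangle$ is simultaneously a sobrification and a $\mathbf{K}$-reflection of $X$, and by the uniqueness of reflections up to homeomorphism the two coincide.

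The only nontrivial step is the bookkeeping of closed sets needed to verify that $h$ is a homeomorphism; this is purely case-based and adds nothing conceptually new beyond what is already carried out in the proof of Corollary \ref{key corollary 2}. Everything else is a clean combination of the adequacy hypothesis with the contrapositive of Theorem \ref{key theorem 2}, exactly paralleling the pattern by which Corollary \ref{sobrification agree K-reflection 3} was deduced from Corollary \ref{key corollary 1} in the $X\in \ir_c(X)$ setting.
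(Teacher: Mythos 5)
Your proposal is correct and follows essentially the same route as the paper, which derives this corollary directly from Theorem \ref{K-adequate reflective} (adequacy gives existence of the $\mathbf{K}$-reflection) combined with the contrapositive of Theorem \ref{key theorem 2} and the sobrification construction in the proof of Corollary \ref{key corollary 2}. Your explicit unfolding of the homeomorphism $h$ and of the contradiction argument is exactly what the paper leaves implicit.
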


\begin{corollary} \label{sobrification agree K-reflection 14}
Suppose that $\mathbf{K}=\mathbf{Top}_d$ or $\mathbf{K}=\mathbf{Top}_w$ or $\mathbf{K}$ is a Keimel-Lawson category. For a $\mathbf{K}^{\neg}$-space $X$, if $\ir_c(X)=\{\overline{\{x\}} : x\in X\}\bigcup\{X\setminus \{\top_X\}\}$, then $\langle X_{\top}^{\natural},\eta_{X}^{\natural} \rangle$ is a sobrification of $X$, and the sobrification of $X$ and the $\mathbf{K}$-reflection of $X$ coincide.
\end{corollary}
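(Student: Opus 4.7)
The plan is to reduce this corollary to Corollary \ref{sobrification agree K-reflection 13}, which already handles the general case of a full and adequate subcategory $\mathbf{K}$ of $\mathbf{Top}_0$ closed under homeomorphisms and containing $\mathbf{Sob}$. The only thing to verify is that each of the three specific choices of $\mathbf{K}$ listed in the hypothesis falls under this umbrella.

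First I would check the categorical properties case by case. For $\mathbf{K}=\mathbf{Top}_d$ and $\mathbf{K}=\mathbf{Top}_w$, containment of $\mathbf{Sob}$ follows from the implication chain ``sobriety $\Rightarrow$ well-filteredness $\Rightarrow$ $d$-space'' recorded just after the definition of well-filteredness; closure under homeomorphisms is recorded in Remark \ref{four categories closed with respect to homeomorphisms}; and adequacy is the content of Theorem \ref{four categories adequate}. For a Keimel-Lawson category $\mathbf{K}$, conditions $(\mathrm{K}_1)$ and $(\mathrm{K}_2)$ are precisely closure under homeomorphisms and containment of $\mathbf{Sob}$, while the last clause of Theorem \ref{four categories adequate} asserts that every Keimel-Lawson category is adequate.

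With these verifications in hand, Corollary \ref{sobrification agree K-reflection 13} applies directly to $X$: since $X$ is a $\mathbf{K}^{\neg}$-space satisfying $\ir_c(X)=\{\overline{\{x\}} : x\in X\}\cup\{X\setminus \{\top_X\}\}$, we conclude both that $\langle X_{\top}^{\natural},\eta_{X}^{\natural} \rangle$ is a sobrification of $X$ and that this sobrification coincides with the $\mathbf{K}$-reflection of $X$. There is essentially no obstacle here — the substantive content (the explicit identification of $X_{\top}^{\natural}$ with the canonical sobrification $X^s$ via the bijection $h$, and the appeal to Theorem \ref{key theorem 2} to force the $\mathbf{K}$-reflection to agree with the sobrification) is carried entirely by the preceding corollary. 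The only work in the present statement is the bookkeeping above, ensuring that each of the three declared instances of $\mathbf{K}$ satisfies the hypotheses of Corollary \ref{sobrification agree K-reflection 13}.
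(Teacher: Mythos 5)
Your proposal is correct and matches the paper's own (implicit) derivation: the paper obtains this corollary from the same chain of results (Remark \ref{four categories closed with respect to homeomorphisms}, Theorem \ref{K-adequate reflective}, Theorem \ref{four categories adequate}, Theorem \ref{key theorem 2} and Corollary \ref{key corollary 2}), and routing through Corollary \ref{sobrification agree K-reflection 13} is just a packaging of that same argument. The case-by-case verification that $\mathbf{Top}_d$, $\mathbf{Top}_w$ and any Keimel-Lawson category are full, adequate, homeomorphism-closed and contain $\mathbf{Sob}$ is exactly the bookkeeping the paper relies on.
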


The following example shows that there is a $T_0$ space $X$ such that the $\mathbf{Cut}$-reflection of $X$ does not exist, and neither the $\mathbf{QSob}$-reflection of $X$ nor the $\mathbf{QSob}$-reflection of $X$ exists.

\begin{example}\label{example:3.10} Suppose that $\mathbf{K}\in \{\mathbf{QSob}, \mathbf{WSob}, \mathbf{Top}_{cut}\}$. Let $L=\mathbb{N}\cup \{\top_{\mathbb{N}}\}$. Define an order on $L$ by $1<2<3<...<n<n+1<...\top_{\mathbb{N}}$ and endow $L$ with the topology (as the set of all closed sets) $\tau=\{\da n : n\in \mathbb{N}\}\cup\{\emptyset, L\}\cup \{\mathbb{N}\}$ (clearly, the set of all open subsets is $\upsilon (L)\cup \{\{\top_{\mathbb{N}}\}\}=\sigma (L)\cup \{\{\top_{\mathbb{N}}\}\}$). Then
\begin{enumerate}[\rm (a)]

\item $(L,\tau)$ is not a cut space. So $(L,\tau)$ is neither a weakly sober nor a quasisober space.

Since $\mathbb{N}$ is closed and $\mathbb{N}^{\delta}=(\mathbb{N}^{\ua})^{\da}=\{\top_{\mathbb{N}}\}^{\da}=L\neq \mathbb{N}$, $(L,\tau)$ is not a cut space and hence by Lemma \ref{lemma:2.5},  $(L,\tau)$ is neither a weakly sober nor a quasisober space.

\item $(L,\tau)$ has a greatest element $\top_{(L,\tau)}$.

Clearly, the specialization order of $(L,\tau)$ agrees with the original order on $L$. Whence $(L,\tau)$ has a greatest element $\top_{(L,\tau)}$, namely, the element $\top_{\mathbb{N}}$.

 \item $\ir_c((L,\tau))=\{\downarrow x : x\in L\}\cup\{\mathbb{N}\}=\{\downarrow x : x\in L\}\cup\{L\setminus\{\top_{(L,\tau)}\}$ (note that $L=\da \top_{\mathbb{N}}$). Whence $L\setminus \{\top_{\mathbb{N}}\}=\mathbb{N}\in\ir_c((L,\tau))$.

\item $\overline{\{x\}}=\da x\neq \mathbb{N}=L\setminus \{\top_{\mathbb{N}}\}$ for each $x\in L$.

\item $(L,\tau)$ is a $\mathbf{K}^{\neg}$-space and $\ir_c((L,\tau))=\{\downarrow x : x\in L\}\cup\{\mathbb{N}\}=\{\downarrow x : x\in L\}\cup\{L\setminus\{\top_{(L,\tau)}\}$.

By (a)-(d), $(L,\tau)$ is a $\mathbf{K}^{\neg}$-space and $\ir_c((L,\tau))=\{\downarrow x : x\in L\}\cup\{\mathbb{N}\}=\{\downarrow x : x\in L\}\cup\{L\setminus\{\top_{(L,\tau)}\}$.

\item $\langle (L,\tau)_{\top}^{\natural},\eta_{L}^{\natural} \rangle$ is a sobrification of $(L,\tau)$, where $\eta_{L}^{\natural}: (L,\tau)\rightarrow  (L,\tau)_{\top}^{\natural}$ is defined by  $\eta_{L}^{\natural}(x)=x$ for each $x\in \mathbb{N}$ and $\eta_{L}^{\natural}(\top_{\mathbb{N}})=\top$.

By (e) and Corollary \ref{key corollary 2}, $\langle (L,\tau)_{\top}^{\natural},\eta_{L}^{\natural} \rangle$ is a sobrification of $(L,\tau)$.

\item $\langle (L,\tau)_{\top}^{\natural},\eta_{L}^{\natural} \rangle$ is not a $\mathbf{K}$-reflection of $(L,\tau)$.

Assume, on the contrary, that $\langle (L,\tau)_{\top}^{\natural},\eta_{L}^{\natural} \rangle$ is a $\mathbf{K}$-reflection of $(L,\tau)$. Let $\mathbb{N}_{\top_1\top_2}=\mathbb{N}\cup \{\top_1, \top_2\}$. Define an order on $\mathbb{N}_{\top_1\top_2}$ by $n<n+1$ and $n<\top_1, n< \top_2$ for any $n\in \mathbb{N}$. Endow $\mathbb{N}_{\top_1\top_2}$ with the upper topology $\upsilon (\mathbb{N}_{\top_1\top_2})$. It is straightforward to verify that the set of all nonempty closet subsets of $(\mathbb{N}_{\top_1\top_2}, \upsilon (\mathbb{N}_{\top_1\top_2}))$ is $\{\da n : n\in \mathbb{N}\}\cup\{\da \top_1, \da \top_2\}\cup\{\mathbb{N}\}$, whence $\ir_c (\mathbb{N}_{\top_1\top_2}, \upsilon (\mathbb{N}_{\top_1\top_2}))=\{\da x : x\in \mathbb{N}_{\top_1\top_2}\}\cup \{\mathbb{N}\}$. Since $\mathbb{N}^{\delta}=(\mathbb{N}^{\ua})^{\da}=\{\top_1, \top_2\}^{\da}=\mathbb{N}$, $(\mathbb{N}_{\top_1\top_2}, \upsilon (\mathbb{N}_{\top_1\top_2}))$ is a quasisober space, whence by Lemma \ref{lemma:2.5}, it is a $\mathbf{K}$-space.

Define a mapping $f:(L,\tau)\rightarrow (\mathbb{N}_{\top_1\top_2}, \upsilon (\mathbb{N}_{\top_1\top_2}))$ by

$$f(x)=
\begin{cases}
	n & x=n\in \mathbb{N},\\
    \top_1& x=\top_{\mathbb{N}}.
	\end{cases}$$

It is easy to see that $f$ is a topological embedding. Since $\langle (L,\tau)_{\top}^{\natural},\eta_{L}^{\natural} \rangle$ is a $\mathbf{K}$-reflection of $(L,\tau)$, there is a unique $f^k: (L,\tau)_{\top}^{\natural}\rightarrow ((\mathbb{N}_{\top_1\top_2}, \upsilon (\mathbb{N}_{\top_1\top_2}))$ such that $f=f^k\circ \eta_{L}^{\natural}$, that is, the the following diagram commutes.

\begin{equation*}
\centerline{
\xymatrix{ (L,\tau) \ar[dr]_{f} \ar[r]^-{\eta_{L}^{\natural}}&  (L,\tau)_{\top}^{\natural}\ar@{.>}[d]^{f^{k}} & \\
  & (\mathbb{N}_{\top_1\top_2}, \upsilon (\mathbb{N}_{\top_1\top_2}))  & &
   }}
\end{equation*}

\noindent Then $f^k(n)=f^k(\eta_{L}^{\natural}(n))=f(n)=n$ for each $n\in \mathbb{N}$ and $f^k(\top)=f^k(\eta_{L}^{\natural}(\top_{\mathbb{N}}))=f(\top_{\mathbb{N}})=\top_1$. For each $n\in \mathbb{N}$, since $n<\top_{\mathbb{N}}<\top$ in $(L,\tau)_{\top}^{\natural}$, we have that $n=f^k(n)\leq f^k(\top_{\mathbb{N}})\leq f^k(\top)=\top_1$ in $(\mathbb{N}_{\top_1\top_2}, \upsilon (\mathbb{N}_{\top_1\top_2}))$. Whence $f^k(\top_{\mathbb{N}})$ is an upper bound of $\mathbb{N}$ in $\mathbb{N}_{\top_1\top_2}$ and $f^k(\top_{\mathbb{N}})\leq \top_1$. So $f^k(\top_{\mathbb{N}})=\top_1$ and hence $(f^k)^{-1}(\mathbb{N})=\mathbb{N} \notin \mathcal{C}((L,\tau)_{\top}^{\natural})$ (note that $\mathbb{N}$ is closed in $(\mathbb{N}_{\top_1\top_2}, \upsilon (\mathbb{N}_{\top_1\top_2}))$), which contradicts the continuity of $f^k$.

Thus $\langle (L,\tau)_{\top}^{\natural},\eta_{L}^{\natural} \rangle$ is not a $\mathbf{K}$-reflection of $(L,\tau)$.

\item The $\mathbf{K}$-reflection of $(L,\tau)$ does not exist.

By (e)(f)(g) and Corollary \ref{key corollary 2}, the $\mathbf{K}$-reflection of $(L,\tau)$ does not exist.

\item $(L,\tau)$ is a $\mathsf{DC}$-space and hence it is both a Rudin space and a $\mathsf{WD}$-space.

Since $\ir_c((L,\tau))=\{\downarrow x : x\in L\}\cup\{\mathbb{N}\}$ and $\mathbb{N}$ is a chain, $(L,\tau)$ is a $\mathsf{DC}$-space, whence it is both a Rudin space and a $\mathsf{WD}$-space by Corollary \ref{SDRWspacerelation}.

\item $(L,\tau)$ is neither  a $d$-space nor a well-filtered space.

Since $\{\top_{\mathbb{N}}\}$ is open in $(L,\tau)$ but $\{\top_{\mathbb{N}}\}\not\in\sigma (L)$, $(L,\tau)$ is not a $d$-space and hence not a well-filtered space.

\item The sobrification of $(L,\tau)$, the $\mathbf{WF}$-reflection of $(L, \tau)$ and the $d$-reflection of $(L, \tau)$ all coincide.

By (b)(c)(d)(j) and Corollary \ref{sobrification agree K-reflection 14}, the sobrification of $(L,\tau)$, the $\mathbf{WF}$-reflection of $(L, \tau)$ and the $d$-reflection of $(L, \tau)$ all coincide. It follows from (f) that $\langle (L,\tau)_{\top}^{\natural},\eta_{L}^{\natural} \rangle$ is both a $\mathbf{WF}$-reflection of $(L, \tau)$ and a $d$-reflection of $(L, \tau)$.

\end{enumerate}
\end{example}

The following example shows that there is a $T_0$ space $Y$ such that the $\mathbf{DC}$-reflection of $Y$ nor the $\mathbf{QSob}$-reflection of $Y$ exists.

\begin{example} \label{example:3.7} Suppose that $\mathbf{K}\in \{\mathbf{Top}_{dc}, \mathbf{QSob}\}$.
Let $X$ be a countable infinite set and endow $X$ with the cofinite topology (having the complements of the finite sets as open sets). The
resulting space is denoted by $X_{cof}$. Then
\begin{enumerate}[\rm (a)]

     \item $\mathcal C(X_{cof})=\{\emptyset, X\}\cup X^{(<\omega)}$, $X_{cof}$ is $T_1$ and hence a $d$-space.
    \item $\mk (X_{cof})=2^X\setminus \{\emptyset\}$, $X_{cof}$ is locally compact and first-countable, and $X_{cof}$ is not well-filtered and hence not sober.

Clearly, $\mk (X_{cof})=2^X\setminus \{\emptyset\}$ and hence $X_{cof}$ is locally compact. For $x\in X$, the countable family $\{(X\setminus F)\cup \{x\} : F\in X^{(<\omega)}\}$ is an open neighborhood base of $x$ in $X_{cof}$. Whence $X_{cof}$ is first-countable. Let $\mathcal K=\{X\setminus F: F\in X^{(<\omega)}\}$. Then $\mathcal K$ is a filtered family of saturated compact subsets of $X_{cof}$ and $\bigcap \mathcal K=\emptyset$, but $X\setminus F\neq\emptyset$ for every $ F\in X^{(<\omega)}$. Thus $X_{cof}$ is not well-filtered.

 \item $\ir_c(X_{cof})=\{\{x\}:x\in X\}\cup \{X\}$.

    Since $X$ is infinite, $X$ is irreducible. Whence $\ir_c(X_{cof})=\{\{x\}:x\in X\}\cup \{X\}$.

    \item $X_{cof}$ is a Rudin space and hence a $\mathsf{WD}$-space.

By (c) and Lemma \ref{DRWIsetrelation} we only need to show that $X$ is a Rudin set. Let $\mathcal{K}=\{X\setminus F : F\in X^{(<\omega)}\}$. Then by (b), $\mathcal{K}\subseteq \mk (X_{cof})$ is a filtered family and $X\in M(\mathcal K)$. If $B\in \mathcal C(X_{cof})$ is a proper subset of $X$, then $B$ is finite, whence $B\cap (X\setminus B=\emptyset$ and $X\setminus B\in \mathcal K$, proving that $B\not\in M(\mathcal K)$. So $X\in m(\mathcal K)$ and hence $X$ is a Rudin set. Thus $X_{cof}$ is a Rudin space and hence a $\mathsf{WD}$-space by Corollary \ref{SDRWspacerelation}.

    \item $X_{cof}$ is not a $\mathbf{K}$-space.

    By $\ir_c(X_{cof})=\{\{x\}:x\in X\}\cup \{X\}$ and $X^{\delta}=X$, $X_{cof}$ is a weakly sober space and hence a cut space. Clearly, $\mathcal D(X_{cof})=\{\{x\}:x\in X\}$, whence $X\neq D^{\delta}$ and $X\neq \cl_{X_{cof}}D$ for each $D\in \mathcal D(X_{cof})$. So $X_{cof}$ is neither a $\mathsf{DC}$ space nor a quasisober space.

\item $\langle (X_{cof})_{\top}^{\flat},\eta_{X_{cof}}^{\flat} \rangle$ is a sobrification of $X_{cof}$.

     By (c)(e) and Corollary \ref{key corollary 1}, $\langle (X_{cof})_{\top}^{\flat},\eta_{X_{cof}}^{\flat} \rangle$ is a sobrification of $X_{cof}$.

     \item  $\langle (X_{cof})_{\top}^{\flat},\eta_{X_{cof}}^{\flat} \rangle$ is not a $\mathbf{K}$-reflection of $X_{cof}$.

     We prove that by contradiction. Assume that $\langle (X_{cof})_{\top}^{\flat},\eta_{X_{cof}}^{\flat} \rangle$ is a $\mathbf{K}$-reflection of $X_{cof}$. Let $\mathbb{N}$ be the poset of all natural numbers with the usual order (i.e., $n< n+1$ for each $n\in \mathbb{N}$) and let $Y=X\cup \mathbb{N}$ with ordering defined by $x\leq_Y y$ if{}f $x=y$ or $x, y\in \mathbb{N}$ and $x \leq_{\mathbb{N}} y$. Endow $Y$ with the upper topology $\upsilon(Y)$. It is straightforward to verify that $\ir_c((Y,\upsilon(Y)))=\{\downarrow x:x\in Y\}\cup \{Y\}$ and $\mathbb{N}^{\delta}=Y=\cl_{\upsilon(Y)}\mathbb{N}$. Whence $(Y, \upsilon(Y))$ is both a $\mathsf{DC}$ space and a quasisober space, and hence a $\mathbf{K}$-space.

     Define a mapping $f: X_{cof}\rightarrow (Y, \upsilon (Y))$ by $f(x)=x$ for each $x\in X$. It is easy to verify that $f$ is continuous (in fact, $f$ is a topological embedding). Since $\langle (X_{cof})_{\top}^{\flat},\eta_{X_{cof}}^{\flat} \rangle$ is a $\mathbf{K}$-reflection of $X_{cof}$, there exists a unique continuous mapping $f^{k}: (X_{cof})_{\top}^{\flat}\rightarrow (Y, \upsilon (Y))$ such that $f^{k}\circ \eta_{X_{cof}}^{\flat}=f$, that is, the following diagram commutes.

\begin{equation*}
\centerline{
\xymatrix{ X_{cof} \ar[dr]_{f} \ar[r]^-{\eta_{X_{cof}}^{\flat}}&  (X_{cof})_{\top}^{\flat}\ar@{.>}[d]^{f^{k}} & \\
  & (Y, \upsilon (Y))  & &
   }}
\end{equation*}

For each $x\in X$, we have that $f^{k}(x)=f^{k}(\eta_{X_{cof}}^{\flat}(x))=f(x)=x$ and $x<\top$ in $(X_{cof})_{\top}^{\flat}$, whence $x=f^{k}(x)\leq f^k(\top)$ in $(Y, \upsilon (Y))$. It follows that $X\subseteq \cl_{\upsilon (Y)} \{f^k(\top)\}=\da_Y f^k(\top)$, that is, $f^k(\top)$ is an upper bound of $X$ in $Y$, a contraction.

Thus $\langle (X_{cof})_{\top}^{\flat},\eta_{X_{cof}}^{\flat} \rangle$ is not a $\mathbf{K}$-reflection of $X_{cof}$.

\item The $\mathbf{K}$-reflection of $X_{cof}$ does not exists.

By (c)(e)(f)(g) and Corollary \ref{key corollary 1}, the $\mathbf{K}$-reflection of $X_{cof}$ does not exists, namely, neither the $\mathbf{DC}$-reflection of $X_{cof}$ nor the $\mathbf{QSob}$-reflection of $X_{cof}$ exists.

\item The sobrification of $X_{cof}$ agrees with the $\mathbf{WF}$-reflection of $X_{cof}$.

By (b)(c) and Corollary \ref{sobrification agree K-reflection 4}, the sobrification of $X_{cof}$ agrees with the $\mathbf{WF}$-reflection of $X_{cof}$. It follows from (f) that $\langle (X_{cof})_{\top}^{\natural},\eta_{X_{cof}}^{\natural} \rangle$ is both a sobrification of $X_{cof}$ and a $\mathbf{WF}$-reflection of $X_{cof}$.

\end{enumerate}
\end{example}

Now we show that neither the $\mathbf{DC}$-reflection of Johnstone space nor the $\mathbf{QSob}$-reflection of Johnstone space exists.

\begin{example}\rm\label{example:3.8}  Suppose that $\mathbf{K}\in \{\mathbf{Top}_{dc}, \mathbf{QSob}\}$. Let $\mathbb{J}=\mathbb{N}\times (\mathbb{N}\cup \{\infty\})$ with ordering defined by $(j, k)\leq (m, n)$ if{}f $j = m$ and $k \leq n$, or $n =\infty$ and $k\leq m$ (see Figure 1).

\begin{figure}[ht]
	\centering
	\includegraphics[height=4.5cm,width=4.5cm]{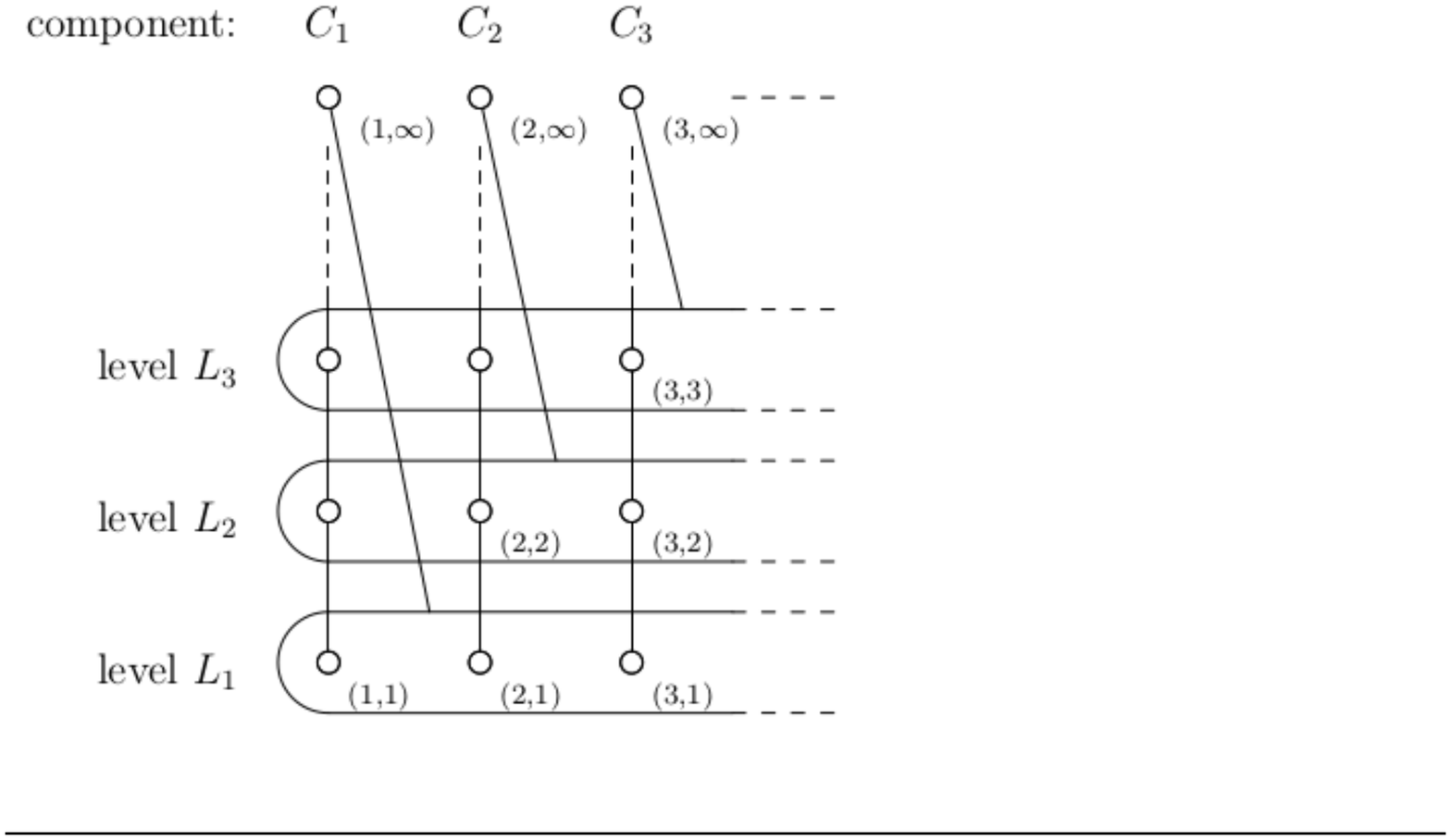}
	\caption{Johnstone's dcpo $\mathbb{J}$}
\end{figure}

\noindent The poset $\mathbb{J}$ is a well-known dcpo constructed by Johnstone in \cite{johnstone-81}. The set $\mathbb{J}_{max}=\{(n, \infty) : n\in\mn \}$ is the set of all maximal elements of $\mathbb{J}$. Since $\mathbb{J}$ is a dcpo, $\Sigma~\!\!\mathbb{J}$ is a $d$-space. The following three conclusions about $\Sigma~\!\!\mathbb{J}$ are known (see, for example, \cite[Example 3.1]{LL} and \cite[the proof of Lemma 3.1]{MLZ}):
\begin{enumerate}[\rm (i)]
\item $\ir_c (\Sigma~\!\!\mathbb{J})=\{\cl_{\sigma (\mathbb{J})}{\{x\}}=\da x : x\in \mathbb{J}\}\cup \{\mathbb{J}\}$.
\item $\mathsf{K}(\Sigma~\!\!\mathbb{J})=(2^{\mathbb{J}_{max}} \setminus \{\emptyset\})\bigcup \mathbf{Fin}~\!\mathbb{J}$.
\item $\Sigma~\!\!\mathbb{J}$ is not well-filtered and hence not a sober space.

Indeed, let $\mathcal{K}_{max}=\{\mathbb{J}_{max}\setminus F : F\in (\mathbb{J}_{max})^{(<\omega)}\}$. Then by (ii), $\mathcal{K}_{max}\subseteq \mathsf{K}(\Sigma~\!\!\mathbb{J})$ is a filtered family and $\bigcap\mathcal{K}_{max}=\bigcap_{F\in (\mathbb{J}_{max})^{(<\omega)}}  (\mathbb{J}_{max}\setminus F)=\mathbb{J}_{max}\setminus \bigcup (\mathbb{J}_{max})^{(<\omega)}=\emptyset$, but there is no $F\in (\mathbb{J}_{max})^{(<\omega)}$ with $\mathbb{J}_{max}\setminus F=\emptyset$. Therefore, $\Sigma~\!\!\mathbb{J}$ is not well-filtered.

\end{enumerate}
\noindent Moreover, we have
\begin{enumerate}[\rm (a)]
\item $\Sigma~\!\!\mathbb{J}$ is a Rudin space and hence a $\mathsf{WD}$ space.

Clearly, for every $x\in \mathbb{J}$, $\da x$ is a Rudin set ($\da x\in m(\{\ua x\})$). Now we show that $\mathbb{J}\in m(\mathcal{K}_{max})$. Obviously, $\mathbb{J}\in M(\mathcal{K}_{max})$. Suppose that $B$ is a nonempty proper closed subset of $\Sigma~\!\!\mathbb{J}$. Then there is $x\in \mathbb{J}\setminus B$.

{Case 1.} $x=(n, m)$ for some $(n, m)\in \mathbb{N}\times \mathbb{N}$.

 Then $B\cap \ua x=\emptyset$. Let $F=\{(1, \infty), (2, \infty), ..., (m-1, \infty)\}$. Then $\mathbb{J}_{max} \setminus F=\{(l, \infty) : l\in \mathbb{N} \hbox{~and~} m\leq l\}\subseteq \ua x$. Whence $B\cap (\mathbb{J}_{max} \setminus F)=\emptyset$, and consequently, $B\not\in M(\mathcal{K}_{max})$.

{Case 2.} $x=(n, \infty)$ for some $n\in \mathbb{N}$.

As $\{(n, l) : l\in \mathbb{N}\}$ is a chain and $\vee_{\mathbb{J}} \{(n, l) : l\in \mathbb{N}\}=(n, \infty)=x\in \mathbb{J}\setminus B\in \sigma (\mathbb{J})$, there is $m\in \mathbb{N}$ such that $(n, m)\in \mathbb{J}\setminus B$. Then as shown in Case 1, $B\not\in M(\mathcal{K}_{max})$.

Thus $\mathbb{J}\in m(\mathcal{K}_{max})$. So $\Sigma~\!\!\mathbb{J}$ is a Rudin space.

\item $\Sigma~\!\!\mathbb{J}$ is a weakly sober space and hence a cut space, but $\Sigma~\!\!\mathbb{J}$ is not a $\mathbf{K}$-space.

Since $\ir_c (\Sigma~\!\!\mathbb{J})=\{\da x : x\in \mathbb{J}\}\cup \{\mathbb{J}\}$ and $\mathbb{J}^{\delta}=\mathbb{J}$, $\Sigma~\!\!\mathbb{J}$ is a weakly sober space and hence a cut space. For any $D\in \mathcal D(\mathbb{J})$, we have $\cl_{\sigma (\mathbb{J})}D=\da \vee D$ and $D^{\delta}=\da \vee D$, whence $\mathbb{J}\neq \cl_{\sigma (\mathbb{J})}D$ and $\mathbb{J}\neq D^{\delta}$, showing that $\Sigma~\!\!\mathbb{J}$ is neither a $\mathsf{DC}$ space nor a quasisober space, that is, $\Sigma~\!\!\mathbb{J}$ is not a $\mathbf{K}$-space.

\item The canonical sobrification $\langle (\Sigma~\!\!\mathbb{J})^s=P_H(\ir_c(\Sigma~\!\!\mathbb{J})), \eta_{\mathbb{J}}\rangle$ of $\Sigma~\!\!\mathbb{J}$ is not a $\mathbf{K}$-reflection of $\Sigma~\!\!\mathbb{J}$, where $\eta_{\mathbb{J}} : \Sigma~\!\!\mathbb{J}\rightarrow P_H(\ir_c(\Sigma~\!\!\mathbb{J}))$ is defined by $\eta_{\mathbb{J}}(x)=\cl_{\sigma (\mathbb{J})}\{x\}=\da x$ for each $x\in \mathbb{J}$.

Assume, on the contrary, that $\langle P_H(\ir_c(\Sigma~\!\!\mathbb{J})), \eta_{\mathbb{J}}\rangle$ is a a $\mathbf{K}$-reflection of $\Sigma~\!\!\mathbb{J}$. Let $\mathbb{N}$ be the poset of all natural numbers with the usual order and consider the Scott space $\Sigma~\!\!\mathbb{N}$. Since $\sigma (\mathbb{N})=\{\ua n : n\in \mathbb{N}\}\cup \{\emptyset\}$, we have $\ir_c(\Sigma~\!\!\mathbb{N})=\{\da n : n\in \mathbb{N}\}\cup \{\mathbb{N}\}$ and $\mathbb{N}=\mathbb{N}^{\delta}$, and consequently, $\Sigma~\!\!\mathbb{N}$ is both a $\mathsf{DC}$ space and a quasisober space. Whence $\Sigma~\!\!\mathbb{N}$ is a $\mathbf{K}$-space.

Define a mapping $f : \Sigma~\!\!\mathbb{J}\rightarrow \Sigma~\!\! \mathbb{N}$ by
$$f((x, y))=
\begin{cases}
	min \{n,m\} & (x, y)=(n, m)\in \mathbb{N}\times \mathbb{N},\\
    n& (x, y)=(n, \infty).
	\end{cases}$$

 As $f^{-1}(\uparrow n)=\{(i,j): (i, j)\in \mathbb{N}\times \mathbb{N} \hbox{~and~} n\leq i, n\leq j\}\cup \{(i,\infty) : i\in \mathbb{N}\hbox{~and~} n\leq i\}=\mathbb{J}\setminus \bigcup\limits_{m=1}^{n-1}\da (m, \infty)\in \sigma(\mathbb{J})$ for each $n\in \mathbb{N}$, $f$ is continuous. Since $\langle P_H(\ir_c(\Sigma~\!\!\mathbb{J})), \eta_{\mathbb{J}}\rangle$ is a $\mathbf{K}$-reflection of $\Sigma~\!\!\mathbb{J}$, there is a unique continuous mapping $f^{k}:P_H(\ir_c(\Sigma~\!\!\mathbb{J}))\rightarrow \Sigma~\!\!\mathbb{N}$ such that $f^{k}\circ \eta_{\mathbb{J}}=f$, that is, the following diagram commutes.

\begin{equation*}
\centerline{
\xymatrix{ \Sigma~\!\!\mathbb{J} \ar[dr]_{f} \ar[r]^-{\eta_{\mathbb{J}}}&  P_H(\ir_c(\Sigma~\!\!\mathbb{J}))\ar@{.>}[d]^{f^{k}} & \\
  & \Sigma~\!\!\mathbb{N}  & &
   }}
\end{equation*}

Since $\downarrow x\subseteq \mathbb{J}$ for any $x\in \mathbb{J}$, we have $f(x)=f^k(\eta_{\mathbb{J}}(x))=f^k(\downarrow x)\leq f^k(\mathbb{J})$. Thus $n=f((n, n))\leq f^k(\mathbb{J})$ for any $n\in \mathbb{N}$, and hence $f^k(\mathbb{J})$ is a greatest element of $\mathbb{N}$, a contradiction.

Thus the canonical sobrification $\langle (\Sigma~\!\!\mathbb{J})^s=P_H(\ir_c(\Sigma~\!\!\mathbb{J})), \eta_{\mathbb{J}}\rangle$ of $\Sigma~\!\!\mathbb{J}$ is not a $\mathbf{DC}$-reflection of $\Sigma~\!\!\mathbb{J}$.

\item The $\mathbf{K}$-reflection of $\Sigma~\!\!\mathbb{J}$ does not exist.

By (i)(b)(c) and Corollary \ref{key corollary 1}, the $\mathbf{K}$-reflection of $\Sigma~\!\!\mathbb{J}$ does not exist, namely, neither the $\mathbf{DC}$-reflection of $\Sigma~\!\!\mathbb{J}$ nor the $\mathbf{QSob}$-reflection of $\Sigma~\!\!\mathbb{J}$ exists.

\item The sobrification of $\Sigma~\!\!\mathbb{J}$ and the $\mathbf{WF}$-reflection of $\Sigma~\!\!\mathbb{J}$ coincide.

By (i)(iii) and Corollary \ref{sobrification agree K-reflection 4}, the sobrification of $\Sigma~\!\!\mathbb{J}$ and the $\mathbf{WF}$-reflection of $\Sigma~\!\!\mathbb{J}$ coincide, and $\langle (\Sigma~\!\!\mathbb{J})_{\top}^{\natural},\eta_{\Sigma~\!\!\mathbb{J}}^{\natural} \rangle$ is both a sobrification of $\Sigma~\!\!\mathbb{J}$ and a $\mathbf{WF}$-reflection of $\Sigma~\!\!\mathbb{J}$.

 \end{enumerate}
\end{example}

Finally, we present a $T_0$ space $Z$ such that neither the $\mathbf{RD}$-reflection of $Z$ nor the $\mathbf{WD}$-reflection of $Z$ exists, and also neither the $\mathbf{QSob}$-reflection of $X_{coc}$ nor the $\mathbf{DC}$-reflection of $X_{coc}$ exists.

\begin{example}\label{example:3.9} Suppose that $\mathbf{K}\in \{\mathbf{Top}_{rd}, \mathbf{Top}_{wd}, \mathbf{QSob}, \mathbf{Top}_{dc}\}$. Let $X$ be a uncountably infinite set and $X_{coc}$ the space equipped with \emph{the co-countable topology} (the empty set and the complements of countable subsets of $X$ are open). Then
\begin{enumerate}[\rm (a)]
    \item $\mathcal C(X_{coc})=\{\emptyset, X\}\cup X^{(\leqslant\omega)}$, $X_{coc}$ is $T_1$ and hence a $d$-space, and the specialization order on $X_{coc}$ is the discrete order.
    \item $\ir_c(X_{coc})=\{\overline{\{x\}} : x\in X\}\bigcup\{X\}=\{\{x\} : x\in X\}\bigcup\{X\}$ and hence $X_{coc}$ is not sober.
    \item  $X_{coc}$ is both a weakly sober space and a cut space, but it is not a quasisober space.

    Clearly, $\{x\}^{\delta}=\{x\}$ for each $x\in X$ and $X^{\delta}=X$. Whence $X_{coc}$ is a weakly sober space and hence a cut space. Since the specialization order on $X_{coc}$ is the discrete order, $\mathcal D(X_{coc})=\{\{x\} : x\in X\}$, and consequently, $X\neq D^{\delta}$ for any $D\in \mathcal D(X_{coc})$. So $X_{coc}$ is not a quasisober space.

    \item $\mk (X_{coc})=X^{(<\omega)}\setminus \{\emptyset\}$ and $X_{coc}$ is well-filtered.

    Clearly, every finite subset is compact. Conversely, if $C\subseteq X$ is infinite, then $C$ has an infinite countable subset $\{c_n : n\in\mn\}$. Let $C_0=\{c_n : n\in\mn\}$ and $U_m=(X\setminus C_0)\cup \{c_m\}$ for each $m\in \mn$. Then $\{U_n : n\in\mn\}$ is an open cover of $C$, but has no finite subcover. Whence $C$ is not compact. Thus $\mk (X_{coc})=X^{(<\omega)}\setminus \{\emptyset\}$.

    Now we show that $X_{coc}$ is well-filtered. Suppose that $\{F_d : d\in D\}\subseteq \mk (X_{coc})$ is a filtered family and $U\in \mathcal O(X_{coc})$ with $\bigcap_{d\in D}F_d\subseteq U$. As $\{F_d : d\in D\}$ is filtered and all $F_d$ are finite, $\{F_d : d\in D\}$ has a least element $F_{d_0}$, and hence $F_{d_0}=\bigcap_{d\in D}F_d\subseteq U$, proving that $X_{coc}$ is well-filtered.

   \item  $X_{coc}$ is not a $\mathsf{WD}$ space and hence it is neither a Rudin space nor a $\mathsf{DC}$ space.

    By (b)(d) and Theorem \ref{soberequiv}, $X_{coc}$ is not a $\mathsf{WD}$ space and hence it is neither a Rudin space nor a $\mathsf{DC}$ space by Corollary \ref{SDRWspacerelation}.

\item $X_{coc}$ is not a $\mathbf{K}$ space and $\langle ({X_{coc}})_{\top}^{\flat},\eta_{X_{coc}}^{\flat} \rangle$ is a sobrification of $X_{coc}$.

  By (b)(c)(e) and Corollary \ref{key corollary 1}, $X_{coc}$ is not a $\mathbf{K}$ space and $\langle ({X_{coc}})_{\top}^{\flat},\eta_{X_{coc}}^{\flat} \rangle$ is a sobrification of $X_{coc}$.

  \item The sobrification $\langle ({X_{coc}})_{\top}^{\flat},\eta_{X_{coc}}^{\flat} \rangle$ of $X_{coc}$ is not a $\mathbf{K}$-reflection of $X_{coc}$.

  Assume, on the contrary, that $\langle ({X_{coc}})_{\top}^{\flat},\eta_{X_{coc}}^{\flat} \rangle$ is a $\mathbf{K}$-reflection of $X_{coc}$. Let $Y=X\cup \mathbb{N}$ ($\mathbb{N}$ is the set of natural number with the usual order) with ordering defined by $x\leq_Y y$ if{}f $x=y$ or $x, y\in \mathbb{N}$ and $x \leq_{\mathbb{N}} y$. Endow $Y$ with the upper topology $\upsilon(Y)$. Then
  \begin{enumerate}[\rm (i)]

  \item $Y$ is not a dcpo ($\mathbb{N}$, as a chain, has not a
greatest lower bound in $Y$) and hence $(Y, \upsilon(Y))$ is not a $d$-space. So $(Y, \upsilon(Y))$ is nether a well-filtered space and nor a sober space.

  \item $\mathcal C((Y, \upsilon(Y)))=\{Y, \emptyset\}\cup X^{(<\omega)}\cup \{F\cup \downarrow n: F\in X^{(<\omega)}, n\in \mathbb{N}\}$ (note that $\emptyset\in X^{(<\omega)}$ and so $\{\downarrow n: n\in \mathbb{N}\}\subseteq C((Y, \upsilon(Y)))$). Whence, $\ir_c((Y,\upsilon (Y)))=\{\cl_{\upsilon(Y)}\{y\}: y\in Y\}\cup \{Y\}=\{\downarrow n: n\in \mathbb{N}\}\cup\{\{x\}: x\in X\}\cup \{Y\}$.

       \item $\mathsf{K}((Y, \upsilon (Y)))=\{\ua G : \emptyset\neq G\subseteq Y\}$ and $(Y, \upsilon(Y))$ is not well-filtered.

       By $\mathcal C((Y, \upsilon(Y)))=\{Y, \emptyset\}\cup X^{(<\omega)}\cup \{F\cup \downarrow n: F\in X^{(<\omega)}, n\in \mathbb{N}\}$, it is easy to verify that every subset of $Y$ is compact in $(Y, \upsilon(Y))$. Whence $\mathsf{K}((Y, \upsilon (Y)))=\{\ua G : \emptyset\neq G\subseteq Y\}$. Clearly, $\mathcal K_{\mathbb{N}}=\{\ua n : n\in \mathbb{N}\}\subseteq \mathsf{K}((Y, \upsilon (Y)))$ is a filtered family and $\bigcap K_{\mathbb{N}}=\emptyset$, but $\ua n\neq \emptyset$ for any $n\in\mathbb{N}$. Thus $(Y, \upsilon(Y))$ is not well-filtered.

\item $(Y,\upsilon (Y))$ is a quasisober space.

 Clearly, $\mathbb{N}$ is a chain in $(Y, \upsilon(Y))$ (its specialization order is the original order of $Y$) and $\mathbb{N}^{\delta}=(\mathbb{N}^{\ua})^{\da}=\emptyset^{\da}=Y$. Whence by $\ir_c((Y,\upsilon (Y)))=\{\cl_{\upsilon(Y)}\{y\}: y\in Y\}\cup \{Y\}$, $(Y,\upsilon (Y))$ is a quasisober space.

       \item $\mathcal D_c((Y,\upsilon (Y))=\mathsf{RD}((Y,\upsilon (Y))=\mathsf{WD}((Y,\upsilon (Y))=\ir_c((Y,\upsilon (Y)))$. So $(Y,\upsilon (Y))$ is a $\mathsf{DC}$ space and hence it is both a Rudin space and a $\mathsf{WD}$ space.

        Since $\mathbb{N}$ is a chain and $Y=\cl_{\upsilon (Y)} \mathbb{N}$, $Y\in \mathcal D_c((Y,\upsilon (Y))$. By $\ir_c((Y,\upsilon (Y)))=\{\cl_{\upsilon(Y)}\{y\}: y\in Y\}\cup \{Y\}$ and Lemma \ref{DRWIsetrelation}, we have that $\mathcal D_c((Y,\upsilon (Y))=\mathsf{RD}((Y,\upsilon (Y))=\mathsf{WD}((Y,\upsilon (Y))=\ir_c((Y,\upsilon (Y)))$. So $(Y,\upsilon (Y))$ is a $\mathsf{DC}$ space, whence it is both a Rudin space and a $\mathsf{WD}$ space.
\item $(Y,\upsilon (Y))$ is a $\mathbf{K}$-space (by (iv)(v)).
\end{enumerate}

        Define the mapping $i_{X_{coc}} : X_{coc}\rightarrow (Y,\upsilon (Y))$ by $i_{X_{coc}}(x)=x$ for each $x\in X$. As $U\cup \mathbb{N}\in \upsilon (Y)$ and $V\cap X\in \mathcal{O}(X_{coc})$ for any $(U, V)\in \mathcal O(X_{coc})\times\upsilon (Y)$, $i_{X_{coc}}$ is a topological embedding.
        Since $\langle ({X_{coc}})_{\top}^{\flat},\eta_{X_{coc}}^{\flat} \rangle$ is a $\mathbf{K}$-reflection of $X_{coc}$, there exists a unique continuous mapping $(i_{X_{coc}})^{k}: ({X_{coc}})_{\top}^{\flat}\rightarrow (Y, \upsilon (Y))$ such that $(i_{X_{coc}})^{k}\circ \eta_{X_{coc}}^{\flat}=i_{X_{coc}}$, that is, the following diagram commutes.

\begin{equation*}
\centerline{
\xymatrix{ X_{coc} \ar[dr]_{i_{X_{coc}}} \ar[r]^-{\eta_{X_{coc}}^{\flat}}&  ({X_{coc}})_{\top}^{\flat}\ar@{.>}[d]^{(i_{X_{coc}})^{k}} & \\
  & (Y, \upsilon (Y))  & &
   }}
\end{equation*}

Then $(i_{X_{coc}})^{k}(x)=(i_{X_{coc}})^{k}(\eta_{X_{coc}}^{\flat}(x))=i_{X_{coc}}(x)=x$ for each $x\in X$ and $(i_{X_{coc}})^{k}(\top)\in Y$. For each $x\in X$, since $x\leq \top$, we have $(i_{X_{coc}})^{k}(x)=x \leq (i_{X_{coc}})^{k}(\top)$. Whence $(i_{X_{coc}})^{k}(\top)$ is an upper bound of $X$ in $Y$, a contradiction.

Thus $\langle ({X_{coc}})_{\top}^{\flat},\eta_{X_{coc}}^{\flat} \rangle$ is not a $\mathbf{K}$-reflection of $X_{coc}$.

\item The $\mathbf{K}$-reflection of $X_{coc}$ does not exist.

  By (b)(f)(g) and Corollary \ref{key corollary 1}, the $\mathbf{K}$-reflection of $X_{coc}$ does not exist, namely, neither the $\mathbf{RD}$-reflection of $X_{coc}$ nor the $\mathbf{WD}$-reflection of $X_{coc}$ exists, and also neither the $\mathbf{QSob}$-reflection of $X_{coc}$ nor the $\mathbf{DC}$-reflection of $X_{coc}$ exists.

\end{enumerate}
\end{example}

By the above four examples, we get the main result of this paper.

\begin{theorem}
For $\mathbf{K}\in \{\mathbf{Top}_{dc}, \mathbf{Top}_{rd}, \mathbf{Top}_{wd}, \mathbf{QSob}, \mathbf{WSob}, \mathbf{Top}_{cut}\}$, $\mathbf{K}$ is not reflective in $\mathbf{Top}_0$.

\end{theorem}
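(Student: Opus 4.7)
The plan is that this theorem is essentially a compilation of the four examples already set up in the section. A category $\mathbf{K}\subseteq_{full}\mathbf{Top}_0$ is reflective in $\mathbf{Top}_0$ if{}f every $T_0$ space admits a $\mathbf{K}$-reflection, so to show that $\mathbf{K}$ is \emph{not} reflective it is enough to exhibit a single $T_0$ space $X$ for which the $\mathbf{K}$-reflection fails to exist. I will therefore match each of the six categories to one of the four witnesses constructed above.

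More concretely, I would proceed as follows. First I recall the distribution of examples: Example~\ref{example:3.10} handles $\mathbf{K}\in\{\mathbf{QSob},\mathbf{WSob},\mathbf{Top}_{cut}\}$ via the space $(L,\tau)$; Example~\ref{example:3.7} handles $\mathbf{K}\in\{\mathbf{Top}_{dc},\mathbf{QSob}\}$ via the cofinite space $X_{cof}$; Example~\ref{example:3.8} handles $\mathbf{K}\in\{\mathbf{Top}_{dc},\mathbf{QSob}\}$ via the Johnstone space $\Sigma~\!\!\mathbb{J}$; and Example~\ref{example:3.9} handles $\mathbf{K}\in\{\mathbf{Top}_{rd},\mathbf{Top}_{wd},\mathbf{QSob},\mathbf{Top}_{dc}\}$ via the co-countable space $X_{coc}$. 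Taking the union of these membership lists, every category in $\{\mathbf{Top}_{dc},\mathbf{Top}_{rd},\mathbf{Top}_{wd},\mathbf{QSob},\mathbf{WSob},\mathbf{Top}_{cut}\}$ is assigned at least one witness space $X$ for which the corresponding $\mathbf{K}$-reflection is shown to be missing. Since each example has already verified (via Corollary~\ref{key corollary 1} or Corollary~\ref{key corollary 2}) that the specific $\mathbf{K}$-reflection of the chosen $X$ cannot exist, reflectivity fails in each case.

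The proof itself is therefore just a one-line assembly: for each $\mathbf{K}$ in the list, pick one witness from the table above; by the corresponding example, that $T_0$ space has no $\mathbf{K}$-reflection, so $\mathbf{K}$ is not reflective in $\mathbf{Top}_0$. The honest intellectual work (construction of $X_\top^{\flat}$ and $X_\top^{\natural}$, the criteria in Theorems~\ref{key theorem 1} and~\ref{key theorem 2}, and the explicit construction of the auxiliary $\mathbf{K}$-spaces into which a would-be reflection would have to extend) has already been done inside the examples, so no further calculation is required. The main obstacle was not in writing this concluding theorem but in engineering the four examples so that their combined coverage exhausts all six categories; once that was accomplished, the present statement follows immediately.
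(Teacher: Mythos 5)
Your proposal is correct and matches the paper's own argument exactly: the paper simply states ``By the above four examples, we get the main result,'' which is precisely your assembly of witnesses, and your coverage table (with $\mathbf{WSob}$ and $\mathbf{Top}_{cut}$ handled only by Example~\ref{example:3.10}, and the remaining four categories covered by the other examples) correctly exhausts all six categories. Nothing further is needed.
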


\noindent{\bf References}


\begin{thebibliography}{99}

\bibitem{Engelking-1989}  R. Engelking, General Topology, Polish Scientific Publishers, Warzawa, 1989.
\bibitem{Erne-1991} M. Ern\'e, The Dedekind-MacNeille completion as a reflector, Order 8(2)(1991) 159-173.
\bibitem{Erne-2009} M. Ern\'e, Infinite distributive laws versus local connectedness and compactness
properties, Topol. Appl. 156 (2009) 2054-2069.
\bibitem{Erne2018} M. Ern\'e, Categories of locally hypercompact spaces and quasicontinuous posets, Appl. Categ. Struct. 26 (2018) 823-854.
\bibitem{Ershov-1999}Y. Ershov, On $d$-spaces, Theor. Comput. Sci. 224 (1999) 59-72.
\bibitem{redbook} G. Gierz, K. Hofmann, K. Keimel, J. Lawson, M. Mislove, D. Scott, Continuous Lattices and Domains, Encycl. Math. Appl., vol. 93, Cambridge University Press, 2003.
\bibitem{Jean-2013} J. Goubault-Larrecq, Non-Hausdorff Topology and Domain Theory, New Mathematical Monographs, vol. 22, Cambridge University Press, 2013.
\bibitem{Heckmann} R. Heckmann, An upper power domain construction in terms of strongly compact sets, in: Lecture Notes in Computer Science, vol. 598, Springer, Berlin Heidelberg New York, 1992, pp. 272-293.
\bibitem{Klause-Heckmann} R. Heckmann, K. Keimel, Quasicontinuous domains and the Smyth powerdomain, Electron. Notes Theor. Comput. Sci. 298 (2013) 215-232.
\bibitem{johnstone-81} P. Johnstone, Scott is not always sober, in: Continuous Lattices, Lecture Notes in Math., vol. 871, Springer-Verlag, 1981, pp. 282-283.
\bibitem{Keimel-Lawson} K. Keimel, J. Lawson, $D$-completion and $d$-topology, Ann. Pure Appl. Log. 159 (3) (2009) 292-306.
\bibitem{LLW} B. Liu, Q. Li and G. Wu, Well-filterifications of topological spaces, Topol. Appl. 279 (2020) 107245.
\bibitem{LL} C. Lu, Q. Li, Weak well-filtered spaces and coherence, Topol. Appl. 230 (2017) 373-380.
\bibitem{MacLane-1997} S. MacLane, Categories for the Working Mathematician, 2nd edition, Springer, 1997.
\bibitem{MLZ} H. Miao, Q. Li, D. Zhao, On two problems about sobriety of topological spaces, Topol. Appl. 295 (2021) 107667.
\bibitem{Schalk} A. Schalk, Algebras for Generalized Power Constructions, PhD Thesis, Technische Hochschule Darmstadt, 1993.
\bibitem{SXXZ} C. Shen, X. Xi, X. Xu, D. Xhao, On well-filtered reflections of $T_{0}$ spaces, Topol. Appl. 267 (2019) 106869.
\bibitem{SBWX-2021} Q. Shan, M. Bao, X. Wen, X. Xu, On almost sober spaces, Topol. Appl. 305 (2022) 107896
publication.
\bibitem{Wyler} U. Wyler, Dedekind complete posets and Scott topologies, in: Lecture Notes in Mathematics, vol. 871, 1981, pp. 384-389.
\bibitem{WX-2020} X. Wen, X. Xu, On some kinds of weakly sober spaces, Topol. Appl. 272 (2020) 107079.
\bibitem{wu-xi-xu-zhao-19}G. Wu, X. Xi, X. Xu, D. Zhao, Existence of well-filterification of $T_0$ topological spaces, Topol. Appl. 270 (2020) 107044.
\bibitem{XXQ1} X. Xu, A direct approach to $K$-reflections of $T_{0}$ spaces, Topol. Appl. 272 (2020) 107070.
\bibitem{xu2020} X. Xu, On H-sober spaces and H-sobrifications of $T_0$ spaces, Topol. Appl. 289 (2021) 107548.
\bibitem{XSXZ-2020} X. Xu, C. Shen, X. Xi, D. Zhao, On $T_{0}$ spaces determined by well-filtered spaces, Topol. Appl. 282 (2020) 107323.
\bibitem{XSXZ-2021} X. Xu, C. Shen, X. Xi, D. Zhao, First-countability, $\omega$-Rudin spaces and well-filtered determined, Topol. Appl. 300 (2021) 107775.
\bibitem{xuzhao-2020} X. Xu, D. Zhao, On topological Rudin's lemma, well-filtered spaces and sober spaces, Topol. Appl. 272 (2020) 107080.
\bibitem{ZJW-2019} B. Zhao, J. Lu and K. Wang, The answer to a problem posed by Zhao and Ho,
Acta Math. Sin. (Engl. Ser.) 35 (2019) 438-444.
\bibitem{ZL-2017} Z. Zhang, Q. Li,  A direct characterization of the monotone convergence space
completion. Topol. Appl. 230 (2017) 99-104.
\bibitem{ZX-2015} W. Zhang, X. Xu, $s_2$-qusicontinuous posets, Theor. Comput. Sci. 574 (2015) 78-85.
\bibitem{ZF-2010} D. Zhao, T. Fan, Dcpo-completion of posets, Theor. Comput. Sci. 411 (2010) 2167-2173.
\bibitem{ZH-2015} D. Zhao and W. Ho, On topologies defined by irreducible sets, J. Log. Algebraic Methods Program. 84 (2015) 185-195.


\end{thebibliography}
\end{document}